 \theoremstyle{plain}
 \newtheorem{thm}{Theorem}[section]
 \newtheorem{prop}[thm]{Proposition}
 \newtheorem{lem}[thm]{Lemma}
 \newtheorem{cor}[thm]{Corollary}
 \newtheorem{conj}[thm]{Conjecture}
 \newtheorem*{thm*}{Theorem}
 \newtheorem*{prop*}{Proposition}
 \newtheorem*{lem*}{Lemma}
 \newtheorem*{cor*}{Corollary}
 \newtheorem*{conj*}{Conjecture}
 \theoremstyle{definition}
 \newtheorem{defn}{Definition}[section]
 \newtheorem{rem}{Remark}[section]
 \theoremstyle{plain}
\newcommand{\bbZ}{\ensuremath{\mathbb{Z}}}
\newcommand{\bbR}{\ensuremath{\mathbb{R}}}
\newcommand{\bbQ}{\ensuremath{\mathbb{Q}}}
\newcommand{\bbC}{\ensuremath{\mathbb{C}}}
\newcommand{\bbA}{\ensuremath{\mathbb{A}}}
\newcommand{\bbP}{\ensuremath{\mathbb{P}}}
\newcommand{\bbF}{\ensuremath{\mathbb{F}}}
\newcommand{\bfF}{\ensuremath{\mathbf{F}}}
\newcommand{\GL}{\ensuremath{\operatorname{GL}}}
\newcommand{\SL}{\ensuremath{\operatorname{SL}}}
\newcommand{\ed}{\ensuremath{\operatorname{ed}}}
\newcommand{\covdim}{\ensuremath{\operatorname{covdim}}}
\newcommand{\rank}{\ensuremath{\operatorname{rank}}}
\newcommand{\Stab}{\ensuremath{\operatorname{Stab}}}
\newcommand{\Tr}{\ensuremath{\operatorname{Tr}}}
\newcommand{\PGL}{\ensuremath{\operatorname{PGL}}}
\newcommand{\Pic}{\ensuremath{\operatorname{Pic}}}
\newcommand{\Div}{\ensuremath{\operatorname{Div}}}
\newcommand{\PSL}{\ensuremath{\operatorname{PSL}}}
\newcommand{\Aut}{\ensuremath{\operatorname{Aut}}}
\newcommand{\Out}{\ensuremath{\operatorname{Out}}}
\newcommand{\tildeAut}{\ensuremath{\widetilde{\operatorname{Aut}}}}
\newcommand{\Cox}{\ensuremath{\operatorname{Cox}}}
\newcommand{\Hom}{\ensuremath{\operatorname{Hom}}}
\newcommand{\Res}{\ensuremath{\operatorname{Res}}}
\newcommand{\Spec}{\ensuremath{\operatorname{Spec}}}
\newcommand{\im}{\ensuremath{\operatorname{im}}}
\newcommand{\Ind}{\ensuremath{\operatorname{Ind}}}
\newcommand{\calG}{\ensuremath{\mathcal{G}}}
\newcommand{\calO}{\ensuremath{\mathcal{O}}}
\newcommand{\calR}{\ensuremath{\mathcal{R}}}
\newcommand{\spaceHack}{\rule{0pt}{2.6ex} \rule[-1.2ex]{0pt}{0pt}}
\providecommand{\mat}[1]{\ensuremath{\bigl( \begin{smallmatrix} #1 \end{smallmatrix} \bigr)}}
\begin{document}
 
\title{Finite Groups of Essential Dimension $2$}
\author{Alexander Duncan \thanks{Department of Mathematics, University of British Columbia, Vancouver, BC, V6T 1Z2, Canada; duncan@math.ubc.ca}}

\maketitle

\begin{abstract}
We classify all finite groups of essential dimension 2 over an algebraically closed field of characteristic 0.
\end{abstract}

\smallskip

\noindent
{\bf Mathematics Subject Classification:} 14E07, 14L30, 14J26

\section{Introduction}\label{sec:intro}

Let $k$ be an algebraically closed field of characteristic $0$ and let $G$ be a finite group.  A faithful $G$-variety is a variety with a faithful $G$-action.  A \emph{compression} is a $G$-equivariant dominant rational map of faithful $G$-varieties.  Given a faithful $G$-variety $X$ over $k$, the \emph{essential dimension of $X$}, denoted $\ed_k(X)$, is the minimum dimension of $Y$ over all compressions $X \dasharrow Y$ where $Y$ is a faithful $G$-variety over $k$.  The \emph{essential dimension of $G$}, denoted $\ed_k(G)$, is the maximum of $\ed_k(X)$ over all faithful $G$-varieties $X$ over $k$.

The major result of this paper is a classification of all finite groups of essential dimension $2$ over an algebraically closed field of characteristic $0$.

Loosely speaking, the essential dimension of a group is the minimal number of parameters required to describe its faithful actions.  Essential dimension was introduced by Buhler and Reichstein in \cite{BuhlerReichstein1997On-the-essentia}.  Their main interest was to determine how much a ``general polynomial of degree $n$'' can be simplified via non-degenerate Tschirnhaus transformations.  They showed that essential dimension of the symmetric group on $n$ letters, $\ed_k(S_n)$, is the minimal number of algebraically independent coefficients possible for a polynomial simplified in this manner.

The essential dimension of finite groups in general is of interest in Inverse Galois Theory.  Here one wants to construct polynomials over a field $k$ with a given Galois group $G$.  Ideally, one wants polynomials that parametrize all fields extensions with that group: the so-called \emph{generic polynomials} (see \cite{KemperMattig2000Generic-polynom} and \cite{JensenLedetYui2002Generic-polynom}).  The essential dimension of $G$ is a lower bound for the \emph{generic dimension} of $G$: the minimal number of parameters possible for a generic polynomial.

Essential dimension has been studied in the more general contexts of algebraic groups \cite{Reichstein2000On-the-notion-o}, algebraic stacks \cite{BrosnanReichsteinVistoli2007Essential-dimen}, and functors \cite{BerhuyFavi2003Essential-dimen}.  We restrict our attention to the case of finite groups in this paper.

If $H$ is a subgroup of $G$ then $\ed(H) \le \ed(G)$; a similar inequality fails for quotient groups \cite[Theorem 1.5]{MeyerReichstein2008Some_consequenc}.  The essential dimension of an abelian group is equal to its rank \cite[Theorem 6.1]{BrosnanReichsteinVistoli2007Essential-dimen}.  The essential dimensions of the symmetric groups, $S_n$, and alternating groups, $A_n$, are known for $n \le 7$ and bounds exist for higher $n$ (see \cite[Theorems 6.5 and 6.7]{BrosnanReichsteinVistoli2007Essential-dimen}, \cite[Proposition 3.6]{Serre2008Le-group-de-cre} and \cite{Duncan2010Essential-dimen}).  It is a deep result of Karpenko and Merkurjev \cite{KarpenkoMerkurjev2008Essential-dimen} that the essential dimension of a $p$-group is the minimal dimension of a faithful linear representation.

We use the notation $D_{2n}$ to denote the dihedral group of order $2n$.  Finite groups of essential dimension $1$ were classified by Buhler and Reichstein in their original paper; they are either cyclic or isomorphic to $D_{2n}$ where $n$ is odd.  There is a classification for infinite base fields by Ledet \cite{Ledet2007Finite-groups-o} (see also Remark \ref{rem:Ledet}), and for arbitrary base fields by Chu, Hu, Kang and Zhang \cite{ChuHuKangZhang2008Groups_with_ess}.

We review what is known about groups $G$ of essential dimension $2$.  If $G$ contains an abelian subgroup $A$ then $\rank(A) \le 2$.  The Sylow $p$-subgroups $G_p$ of $G$ can be described using the Karpenko-Merkurjev theorem: $G_p$ must be abelian for all $p$ odd, and groups $G_2$ must be of a very special form (see \cite[Theorems 1.2 and 1.3]{MeyerReichstein2008Some_consequenc}).  Any subgroup of $\GL_2(\bbC)$ or $S_5$ has essential dimension $\le 2$.

Finite groups of essential dimension $2$ with non-trivial centres were classified (implicitly) by Kraft, L\"otscher and Schwarz (see \cite{KraftSchwarz2007Compression-of-} and \cite{KraftLotscherSchwarz2009Compression-of-}).  They show that a finite group with a non-trivial centre has essential dimension $\le 2$ if and only if it can be embedded in $\GL_2(\bbC)$.  Their main interest was in \emph{covariant dimension}, a ``regular'' analog of essential dimension.  See also \cite{Reichstein2004Compressions-of} and \cite{Lotscher2008Application-of-}.

Our study of essential dimension uses the concept of a \emph{versal $G$-variety} (defined in section \ref{sec:prelim}).  These are simply models of the versal torsors seen in Galois cohomology.  We will often say a $G$-action is versal if it gives rise to a versal $G$-variety.  The key fact is that if $G$ is a finite group of essential dimension $n$ then there exists a versal unirational $G$-variety of dimension $n$.

To study essential dimension $2$, we only need to consider versal rational $G$-surfaces since unirational surfaces are always rational over an algebraically closed field of characteristic $0$.  Furthermore, one can $G$-equivariantly blow-down sets of exceptional curves to obtain a \emph{minimal model} of a smooth $G$-surface.  The minimal rational $G$-surfaces were classified by Manin \cite{Manin1967Rational-surfac} and Iskovskikh \cite{Iskovskih1979Minimal-models-} building on work by Enriques: they either possess conic bundle structures or they are del Pezzo surfaces.  The use of the Enriques-Manin-Iskovskikh classification for computing essential dimension was pioneered by Serre in his proof that $\ed_k(A_6)=3$ \cite[Proposition 3.6]{Serre2008Le-group-de-cre}.  Independently, Tokunaga \cite{Tokunaga2006Two-dimensional} has also investigated versal rational surfaces.

The dichotomy into conic bundle structures and del Pezzo surfaces is too coarse to easily identify exactly which groups occur.  Our current work was inspired by Dolgachev and Iskovskikh's \cite{DolgachevIskovskikh2006Finite-subgroup} finer classification of such groups.  Their goal was to classify conjugacy classes of finite subgroups of the Cremona group of rank $2$ (the group of birational automorphisms of a rational surface).  This problem has a long history.  The first classification was due to Kantor; an exposition of his results (with some corrections) can be found in Wiman \cite{Wiman1896Zur_Thorie_der_}.  Unfortunately, this early classification had several errors, and the conjugacy issue was not addressed.  More recent work on this problem include \cite{BayleBeauville2000Birational_invo}, \cite{deFernex2004On_planar_Cremo}, \cite{Zhang2001Automorphisms-o}, \cite{Beauville2007p-elementary-su} and  \cite{Blanc2007Finite-abelian-}. 

Recall that the automorphism group of the algebraic group $(\bbC^\times)^n$ is isomorphic to $\GL_n(\bbZ)$.  Our main theorem is as follows:

\begin{thm}\label{thm:ed2classification}
Let $T = (\bbC^\times)^2$ be a $2$-dimensional torus.  If $G$ is a finite group of essential dimension $2$ then $G$ is isomorphic to a subgroup of one of the following groups:
\begin{enumerate}
\item $\GL_2(\bbC)$, the general linear group of degree $2$,
\label{thm:ed2:GL2}
\item $T \rtimes \calG_1$ with $|G \cap T|$ coprime to $2$ and $3$\\
$\calG_1 = \left\langle \mat{1&-1\\1&0}, \mat{0&1\\1&0} \right\rangle \simeq D_{12}$,
\label{thm:ed2:G1}
\item $T \rtimes \calG_2$ with $|G \cap T|$ coprime to $2$\\
$\calG_2 = \left\langle \mat{-1&0\\0&1}, \mat{0&1\\1&0} \right\rangle \simeq D_{8}$,
\label{thm:ed2:G2}
\item $T \rtimes \calG_3$ with $|G \cap T|$ coprime to $3$\\
$\calG_3 = \left\langle \mat{0&-1\\1&-1}, \mat{0&-1\\-1&0} \right\rangle \simeq S_3$,
\label{thm:ed2:G3}
\item $T \rtimes \calG_4$ with $|G \cap T|$ coprime to $3$\\
$\calG_4 = \left\langle \mat{0&-1\\1&-1}, \mat{0&1\\1&0} \right\rangle \simeq S_3$,
\label{thm:ed2:G4}
\item $\PSL_2(\bbF_7)$, the simple group of order $168$,
\label{thm:ed2:PSL27}
\item $S_5$, the symmetric group on $5$ letters.
\label{thm:ed2:S5}
\end{enumerate}
Furthermore, any finite subgroup of these groups has essential dimension $\le 2$.
\end{thm}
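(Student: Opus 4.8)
The plan is to prove the two implications separately, organized around the notion of a versal $G$-surface. For the \emph{necessity} direction, assume $\ed_k(G)=2$. By the key fact recalled above, $G$ has a versal unirational $G$-variety of dimension $2$; over $k$ it is rational, and after an equivariant resolution of singularities we may take it smooth and projective. The $G$-equivariant minimal model program then produces a $G$-equivariant birational morphism onto a minimal rational $G$-surface $X$; being a compression, $X$ is again versal. By Enriques--Manin--Iskovskikh \cite{Manin1967Rational-surfac,Iskovskih1979Minimal-models-}, $X$ is a $G$-conic bundle over $\bbP^1$ or a del Pezzo surface, and Dolgachev--Iskovskikh \cite{DolgachevIskovskikh2006Finite-subgroup} list the finite groups acting minimally on each such $X$. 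One then goes through their list deciding which actions can be versal, using two tools: the \emph{fixed-point obstruction} --- if $X$ is versal then every abelian subgroup of $G$ fixes a point of $X$, the mechanism behind $\ed_k(A_6)>2$ in \cite{Serre2008Le-group-de-cre}; and \emph{cohomological obstructions} controlling when an action on $\bbP^n$ (or on a del Pezzo surface with a central involution) lifts to a generically free linear action.

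In the case analysis, a del Pezzo surface of degree $1$ has a point fixed by all of $\Aut(X)$, so $G\hookrightarrow\GL_2(\bbC)$ --- case \ref{thm:ed2:GL2}; in degree $2$ the central Geiser involution $\sigma$ is present, so either $\sigma\in G$, whence $G$ has nontrivial centre and Kraft--L\"otscher--Schwarz \cite{KraftLotscherSchwarz2009Compression-of-} force $G\hookrightarrow\GL_2(\bbC)$ again, or $G$ embeds into $\Aut(X/\sigma)\subseteq\PGL_3(\bbC)$ and one is reduced to the $\bbP^2$ case; degree $3$ is handled similarly. The cases producing genuinely new groups are: $\bbP^2$, whose minimal non-linearizable groups are subgroups of $\PSL_2(\bbF_7)$ or of toric groups; the sextic del Pezzo surface, toric with automorphism group $T\rtimes\calG_1$ (the hexagonal symmetries); $\bbP^1\times\bbP^1$, whose square-symmetric toric subgroup is $T\rtimes\calG_2$; the quintic del Pezzo surface, with automorphism group $S_5$; and conic bundles, whose versal groups turn out to embed in $\GL_2(\bbC)$ or in some $T\rtimes\calG_i$. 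For each toric $X$ one applies the fixed-point obstruction to the \emph{rotations} of $\calG_i$ --- lattice automorphisms of determinant $1$ with no nonzero fixed vector, for which $T^g$ is finite: a torsion element of $T^g$ together with a lift of $g$ generates a rank-$2$ abelian subgroup permuting the torus-fixed points of $X$ with no fixed point --- and this forces precisely the coprimality conditions on $|G\cap T|$ in \ref{thm:ed2:G1}--\ref{thm:ed2:G4}; reflections, having infinite cokernel on the lattice, impose nothing.

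For the \emph{sufficiency} direction: for the finite groups $\PSL_2(\bbF_7)$ and $S_5$ monotonicity $\ed_k(H)\le\ed_k(G)$ reduces us to bounding the group itself, and for the infinite groups $\GL_2(\bbC)$ and $T\rtimes\calG_i$ we bound $\ed_k(G)$ for each finite subgroup $G$ directly. For \ref{thm:ed2:GL2}, a faithful $2$-dimensional representation is a generically free linear --- hence versal --- $G$-variety, so $\ed_k(G)\le2$. For \ref{thm:ed2:PSL27}, $\PSL_2(\bbF_7)$ has a faithful $3$-dimensional representation, and the induced compression $\bbA^3\dasharrow\bbP^2$ gives $\ed_k(\PSL_2(\bbF_7))\le2$. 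For \ref{thm:ed2:S5}, $S_5$ acts on the quintic del Pezzo surface $X_5$ (which equals $\Aut(X_5)$), this action is versal, and $\ed_k(S_5)\le2$ follows. For the toric cases \ref{thm:ed2:G1}--\ref{thm:ed2:G4}, let $X_i$ be the smooth projective toric surface whose fan in $\bbZ^2$ is $\calG_i$-stable (hexagon, square, triangle), so $T\rtimes\calG_i$ acts on $X_i$; one shows that the coprimality hypothesis on $|G\cap T|$ --- which by the previous paragraph is exactly the condition that every abelian subgroup of $G$ fix a point of $X_i$ --- also suffices to construct a $G$-equivariant dominant rational map from a generically free linear representation to $X_i$, so $\ed_k(G)\le2$.

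The main obstacle is the necessity direction: the Dolgachev--Iskovskikh tables contain many families of conic bundles and del Pezzo surfaces in every degree, and each entry must either be shown to embed in one of the seven groups or excluded --- typically by exhibiting an abelian subgroup with empty fixed locus or a non-trivial cohomological obstruction --- and the resulting coprimality bounds must come out sharp. On the sufficiency side the delicate step is proving these coprimality conditions \emph{sufficient}, not merely necessary, for versality of the toric surfaces $X_i$ (and likewise for the quintic del Pezzo model), since none of these actions is linear: versality must be established by an explicit construction rather than by restricting a linear representation.
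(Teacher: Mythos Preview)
Your overall architecture---reduce to a minimal rational versal $G$-surface via Enriques--Manin--Iskovskikh, then run through the Dolgachev--Iskovskikh classification using the abelian-fixed-point obstruction---is exactly the paper's strategy. Your handling of degrees $1$ and $2$ is fine (your degree-$1$ argument via the tangent space at the basepoint is in fact cleaner than the paper's Bertini-involution route), and the coprimality conditions in cases (\ref{thm:ed2:G1})--(\ref{thm:ed2:G4}) do arise from the fixed-point obstruction applied to rotations in $\calG_i$, essentially as you say.

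There is, however, a genuine gap at precisely the point you flag as ``delicate'': the \emph{sufficiency} of the coprimality conditions for versality of the toric surfaces. You assert that coprimality ``also suffices to construct a $G$-equivariant dominant rational map from a generically free linear representation to $X_i$'' but give no mechanism. The paper supplies one: the Cox-ring/universal-torsor construction yields an exact sequence $1\to K\to E\to G\to 1$ with $E$ acting linearly on $V=\Spec\Cox(X)$, and Theorem~\ref{thm:VersalCoxSplit} shows $X$ is $G$-versal if and only if this sequence splits. Crucially, this reformulation allows a reduction to Sylow subgroups (Corollary~\ref{cor:versalByPGroups}): $X$ is $G$-versal iff it is $G_p$-versal for every prime $p$. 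One then only needs to analyse $2$-groups on $\bbP^1\times\bbP^1$ and $3$-groups on $\bbP^2$ (Lemmas~\ref{lem:versal3groups} and~\ref{lem:versal2groups}), where versality, existence of a fixed point, and the coprimality condition are shown to be equivalent. Without this Sylow reduction you would need direct constructions of compressions for arbitrary finite $G\subset T\rtimes\calG_i$, which is considerably harder; your proposal does not indicate how to do this.

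Two smaller points. First, your claim that $\calG_3$ and $\calG_4$ both act on a ``triangle'' fan is wrong: $\calG_4=\Aut(N,\Delta_{\bbP^2})$, but $\calG_3$ does not preserve $\Delta_{\bbP^2}$ (its reflection sends the ray $(1,0)$ to $(0,-1)$); $\calG_3$ is realised on $DP_6$. Second, the conic-bundle case is substantially more work than your single clause suggests: the paper's Section~\ref{sec:conics} requires a case split on the polyhedral group $G_K$ acting on the generic fibre, with separate arguments for $G_K$ odd cyclic, odd dihedral, and $C_2\times C_2$ (Lemmas~\ref{lem:GKcyclic}--\ref{lem:GKC2C2}), and a non-obvious fact (Lemma~\ref{lem:G0faithfulOnFibres}) that $G_0$ acts faithfully on every fibre component. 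Your ``degree $3$ is handled similarly'' is also too quick---there is no Geiser-type central involution in degree $3$, and the paper instead filters the Dolgachev--Iskovskikh list directly.
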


A few remarks are in order.
\begin{rem}\label{rem:versalGroupsNotSurfaces}
We do not classify all versal minimal rational $G$-surfaces; we only determine which groups appear.   Different $G$-surfaces with the same group $G$ may not be equivariantly birationally equivalent.  There exist two versal $S_5$-surfaces that are not equivariantly birationally equivalent: the Clebsch diagonal cubic (by a result of Hermite, see \cite{Coray1987Cubic_hypersurf}, \cite{ReichsteinYoussin2002Conditions_sati} and \cite{Kraft2006A_result_of_Her}) and the del Pezzo surface of degree $5$ (see the proof of Theorem \ref{thm:4surfacesClassification}).  Other examples of this phenomenon can be found for abelian groups \cite{ReichsteinYoussin2002A-birational-in}, and for versal actions of $S_4$ and $A_5$ \cite{BannaiTokunaga2007A-note-on-embed}.
\end{rem}

\begin{rem}
Essential dimension can be defined over any field.  Dolgachev and Iskovskikh's classification, and many of our other references, take the base field to be $\bbC$.  We shall see in Lemma \ref{lem:Csuffices} below that this is sufficient to handle any algebraically closed field of characteristic $0$.
\end{rem}

\begin{rem}
For algebraically closed fields of non-zero characteristic, the Enriques-Manin-Iskovskikh classification still holds.  However, the Dolgachev-Iskovskikh classification no longer applies.  Furthermore, unirational surfaces are not necessarily rational in this case, so the classification may be inadequate.  See \cite{Serre2008Le-group-de-cre} for related discussion.
\end{rem}

\begin{rem}\label{rem:generalChar0Fields}
For non-algebraically closed fields of characteristic $0$, we know that any group of essential dimension $2$ must appear in the list above (this is immediate from \cite[Proposition 1.5]{BerhuyFavi2003Essential-dimen}).  However, the problem of determining which groups appear is more complicated.  It is possible that a versal $G$-surface over a field $k$ may not be defined over a subfield $k'$ while there may be another versal $G$-surface that \emph{is} defined over $k'$.  A full classification of versal minimal rational $G$-surfaces would remedy this situation.
\end{rem}

\begin{rem}
For essential dimension $3$, one might try to do something similar with threefolds.  The problem is significantly more difficult.  First, even over $\bbC$ there exist unirational threefolds that are not rational.  Second, there is no analog of the Enriques-Manin-Iskovskikh classification here, nor the Dolgachev-Iskovskikh classification.  In fact, until recently it was an open question as to whether \emph{all} finite groups could be embedded into the Cremona group of rank $3$ \cite[6.0]{Serre2009A-Minkowski-sty}.

However, Prokhorov \cite{Prokhorov2009Simple-finite-s} shows that very few simple non-abelian groups can act faithfully on unirational threefolds.  The author \cite{Duncan2010Essential-dimen} has applied Prokhorov's work to show that the essential dimensions of $A_7$ and $S_7$ are $4$.
\end{rem}

\begin{rem}
Note that, since unirational and rational coincide in dimension $2$, for every group $G$ appearing above, there exists a versal $G$-variety $X$ whose rational quotient $X/G$ is rational.  This has consequences related to Noether's problem.  As suggested by the referee, for any faithful linear representation $V$ of a group $G$ in this list, the invariant field $\bbC(V)^G$ is retract rational (see \cite{Saltman1982Generic_Galois_}, \cite{DeMeyer1983Generic_polynom} or \cite[Remark 5(a)]{KemperMattig2000Generic-polynom}).  In addition, any such $G$ possesses a generic polynomial with only two parameters.  Thus, the list above is also a complete classification of groups of generic dimension $2$.
\end{rem}

The proof of Theorem \ref{thm:ed2classification} breaks into two mostly independent pieces.  We show that it suffices to consider only four surfaces:

\begin{thm}\label{thm:ed2to4surfaces}
If $G$ is a finite group of essential dimension $2$ then $G$ has a versal action on the projective plane $\bbP^2$, the product of projective lines $\bbP^1 \times \bbP^1$, or the del Pezzo surfaces of degree $5$ and $6$.
\end{thm}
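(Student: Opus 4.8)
The plan is to start from a versal minimal rational $G$-surface and go through the Enriques--Manin--Iskovskikh classification, showing in each case that $G$ also acts versally on $\bbP^2$, on $\bbP^1\times\bbP^1$, or on a del Pezzo surface of degree $5$ or $6$. The reductions recalled in the introduction give the starting point: since $\ed_k(G)=2$ there is a versal unirational $G$-surface, it is rational over $k$, and after passing to a smooth projective $G$-model and $G$-equivariantly contracting exceptional curves we obtain a minimal rational $G$-surface $X$ that is still versal (versality being preserved under these operations; see Section~\ref{sec:prelim}). Two inputs are used repeatedly: the Enriques--Manin--Iskovskikh dichotomy, by which $X$ is either a del Pezzo surface or admits a $G$-equivariant conic bundle structure $X\to\bbP^1$; and the fixed-point obstruction to versality, namely that a versal $X$ satisfies $X^A\neq\emptyset$ for every abelian subgroup $A\le G$. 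The guiding idea is that the second input, combined with the Dolgachev--Iskovskikh classification of finite groups acting minimally on del Pezzo surfaces and conic bundles~\cite{DolgachevIskovskikh2006Finite-subgroup}, forces $G$ onto a short list of groups, each of which then turns out to act versally on one of the four model surfaces.

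Suppose first that $X$ is a del Pezzo surface of degree $d=K_X^2$. If $d=9$ then $X\cong\bbP^2$; if $d=8$ and $X$ is minimal then $X\cong\bbP^1\times\bbP^1$; and if $d\in\{5,6\}$ then $X$ is already among the four. A del Pezzo surface of degree $7$ has a unique $(-1)$-curve meeting the two others, which is therefore $G$-invariant and can be $G$-equivariantly contracted, so $d=7$ cannot occur for a minimal $X$. This leaves $d\le 4$, which is the first serious case. Here I would run through the Dolgachev--Iskovskikh list of finite groups acting minimally on del Pezzo surfaces of degree at most $4$ and apply the fixed-point obstruction: such surfaces have very restricted loci of points fixed by abelian subgroups of their automorphism groups --- for instance, the standard $(\bbZ/2)^4$ in the automorphism group of a degree-$4$ del Pezzo surface, and the elementary abelian $3$-group in the automorphism group of the Fermat cubic surface, have no fixed points at all --- so versality cuts the list down to a handful of groups, essentially those related to the $S_5$-action on the Clebsch diagonal cubic. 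For each surviving $G$ one exhibits a versal action on $\bbP^2$ or on the del Pezzo surface of degree $5$ (realizing $G$ inside $\PGL_3(\bbC)$ or $S_5$ acting in the standard way), verifying versality again with the fixed-point criterion.

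Finally, suppose $X\to\bbP^1$ is a $G$-equivariant conic bundle, with $\overline G$ the image of $G$ in $\PGL_2$ (its action on the base) and $G_0$ the kernel. If $X$ has no reducible fibre then $X$ is a Hirzebruch surface; the case $X\cong\bbP^1\times\bbP^1$ is immediate, and otherwise the unique section of negative self-intersection is $G$-invariant, so the finite subgroups of $\Aut(X)$ that arise --- central extensions of a polyhedral group by a cyclic group --- are confined, by the fixed-point obstruction, to groups acting versally on $\bbP^2$ or $\bbP^1\times\bbP^1$ (or already covered by the del Pezzo analysis). If $X$ has $8-d\ge1$ reducible fibres, the fibrewise automorphism subgroup $G_0$ is severely restricted, and again the fixed-point obstruction bounds $G$; one checks that the resulting groups act versally on $\bbP^1\times\bbP^1$ or on the toric del Pezzo surface of degree $6$. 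The main obstacle is exactly this last bookkeeping: in the del Pezzo case of degree $\le 4$ and the conic bundle case with several reducible fibres, one must work through the Dolgachev--Iskovskikh classification group by group and compute fixed loci of abelian subgroups carefully enough to eliminate every group that does not already act versally on one of $\bbP^2$, $\bbP^1\times\bbP^1$, or the del Pezzo surfaces of degrees $5$ and $6$ --- the remaining cases being essentially formal.
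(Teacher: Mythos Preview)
Your overall strategy matches the paper's: reduce to a minimal rational versal $G$-surface, apply the Enriques--Manin--Iskovskikh dichotomy, and use the Dolgachev--Iskovskikh classification together with the abelian fixed-point obstruction (Proposition~\ref{prop:abelSubgroups}). The degree $\ge 5$ del Pezzo cases go exactly as you describe.

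The gap is that two structural ingredients are missing, and without them the ``bookkeeping'' you defer is neither routine nor does it land where you predict. First, the Kraft--L\"otscher--Schwarz result (Proposition~\ref{prop:centreMeansP2}): any $G$ of essential dimension $2$ with non-trivial centre embeds in $\GL_2(\bbC)$ and hence acts versally on $\bbP^2$. The paper uses this repeatedly --- it is the backbone of the conic-bundle analysis (whenever $G_K$ has a characteristic involution one is done immediately) and it disposes of del Pezzo degrees $1$ and $2$ entirely, where the paper does \emph{not} consult the Dolgachev--Iskovskikh lists: the anticanonical system gives a degree-$2$ cover of $\bbP^2$ (resp.\ a quadric cone), and either $G$ acts faithfully on the target or the Geiser/Bertini involution is central in $G$ and Proposition~\ref{prop:centreMeansP2} finishes. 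Your expectation that the degree $\le 4$ survivors are ``essentially related to the $S_5$-action on the Clebsch cubic'' is accurate only in degree $3$, and even there the residual group $S_3\times S_3$ is sent to $\bbP^1\times\bbP^1$, not to $\bbP^2$ or $DP_5$. Second, the conic-bundle case is substantially harder than you indicate: after the centre reduction one must still analyse $G_K$ odd cyclic, odd dihedral, or $C_2\times C_2$ separately (Lemmas~\ref{lem:GKcyclic}--\ref{lem:GKC2C2}), the last of which needs real group-theoretic argument beyond fixed-point checks and produces a versal action on $\bbP^2$, not on $DP_6$ as you suggest.
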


And we show that the groups with versal actions on these four surfaces are those listed above (Theorem \ref{thm:4surfacesClassification}).  As in remark \ref{rem:versalGroupsNotSurfaces}, we point out that Theorem \ref{thm:ed2to4surfaces} does \emph{not} classify minimal versal $G$-surfaces.

We also mention some intermediary results that we feel are of independent interest.  Three of the four surfaces are toric varieties.  In order to classify their versal actions, we develop techniques that apply to smooth complete toric varieties in general.  We leverage the theory of \emph{Cox rings} \cite{Cox1995The-homogeneous} and \emph{universal torsors} \cite{ColliotTheleneSansuc1987La_descente_sur}): a faithful $G$-action on a complete non-singular toric variety is versal if and only if it lifts to an action on the variety of the associated Cox ring (Theorem \ref{thm:VersalCoxSplit}).

This result has some important corollaries.  First, if a complete non-singular toric variety has a $G$-fixed point then it is versal (Corollary \ref{cor:fixedPointImpliesVersal}).  Second, a complete non-singular toric variety is $G$-versal if and only if it is $G_p$-versal for all of its $p$-subgroups (Corollary \ref{cor:versalByPGroups}).  The assumption that the variety is toric may be gratuitous (see Conjecture \ref{conj:versalByPGroups}).  This second corollary is instrumental in our proof of Theorem \ref{thm:4surfacesClassification}; it reduces the study of versal toric surfaces to actions of $3$-groups on $\bbP^2$ and actions of $2$-groups on $\bbP^1 \times \bbP^1$.

The rest of this paper is structured as follows.  In section \ref{sec:prelim}, we recall basic facts about versal varieties, essential dimension and the Enriques-Manin-Iskovskikh classification.  In section \ref{sec:toricVersal}, we develop tools for determining when a toric $G$-variety is versal.  In section \ref{sec:4surfaces}, we determine precisely which groups act versally on the four surfaces of Theorem \ref{thm:ed2to4surfaces}.  In section \ref{sec:conics}, we show that all groups acting versally on conic bundle structures already act versally on the four surfaces.  In section \ref{sec:DelPezzo}, we show the same for the del Pezzo surfaces. This proves Theorem \ref{thm:ed2to4surfaces} and, thus, Theorem \ref{thm:ed2classification}.

\section{Preliminaries}\label{sec:prelim}

Recall that the main theorem applies for any algebraically closed field of characteristic $0$.  Nevertheless, for the rest of the paper, we will restrict our attention to $\bbC$.  This is possible in view of the following lemma:

\begin{lem}\label{lem:Csuffices}
Suppose $G$ is a finite group and $k$ is an algebraically closed field of characteristic $0$.  Then $\ed_k(G) = \ed_\bbC(G)$.
\end{lem}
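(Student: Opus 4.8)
The plan is to use a standard specialization/Lefschetz-principle argument. Since $k$ has characteristic $0$, it contains $\mathbb{Q}$, and hence an algebraically closed field containing $\mathbb{Q}$; moreover any algebraically closed field of characteristic $0$ contains an isomorphic copy of $\overline{\mathbb{Q}}$. The key observation is that $G$ is a \emph{finite} group, so all the data defining a faithful $G$-variety and a compression can be written down using only finitely many polynomial equations with finitely many coefficients. Thus any faithful $G$-variety over $k$ together with a compression realizing $\ed_k(X)$ is already defined over a subfield $k_0 \subseteq k$ that is finitely generated over $\mathbb{Q}$, and one reduces to comparing $\ed$ over such $k_0$ with $\ed$ over $\overline{\mathbb{Q}} \subseteq \mathbb{C}$.

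First I would establish the easy inequality. Take a regular representation $V$ of $G$; it is defined over $\mathbb{Q}$, hence over any field. A classical fact (already implicit in the discussion of versal varieties in this section) is that $\ed_k(G) = \ed_k(V)$ for this $V$, i.e. the essential dimension of $G$ is computed on a versal object, and the generic $G$-torsor $V \to V/G$ is versal over every field. Given a compression $V_k \dashrightarrow Y$ over $k$ with $\dim Y = \ed_k(G)$, spread $Y$ and the rational map out over a finitely generated subfield $k_0/\mathbb{Q}$; since $k_0$ embeds into $\mathbb{C}$ (as $k_0$ has transcendence degree at most countable over $\mathbb{Q}$ and $\mathbb{C}$ has uncountable transcendence degree, or by a direct embedding argument), base-changing to $\mathbb{C}$ gives a faithful $G$-variety $Y_\mathbb{C}$ and a compression $V_\mathbb{C} \dashrightarrow Y_\mathbb{C}$, so $\ed_\mathbb{C}(G) \le \dim Y = \ed_k(G)$. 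Faithfulness of the $G$-action on $Y_\mathbb{C}$ is preserved under base change because $G$ is finite and acting nontrivially is an open condition witnessed already over $k_0$. The same argument run in the other direction, embedding a finitely generated field of definition of an optimal $\mathbb{C}$-compression into $k$ (here one uses that $k$ contains $\overline{\mathbb{Q}}$ and that any finitely generated extension of $\mathbb{Q}$ inside $\mathbb{C}$ embeds into $k$, since $k$ has infinite transcendence degree over $\mathbb{Q}$ or, more carefully, since $\overline{\mathbb{Q}(t_1,\dots,t_n)}$ embeds in any algebraically closed field of characteristic $0$), yields $\ed_k(G) \le \ed_\mathbb{C}(G)$.

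The main obstacle, and the only point requiring care, is the compatibility of \emph{versality} and the passage between a field and its subfield: one must know that computing $\ed_k(G)$ as the max over faithful $G$-varieties can be replaced by computing $\ed$ of a single fixed model (a faithful linear representation such as the regular representation), uniformly in the field, so that the spreading-out/base-change argument only needs to track that one variety rather than an unbounded family. This is the content of the fact, available in the literature on essential dimension of finite groups (e.g. Buhler–Reichstein \cite{BuhlerReichstein1997On-the-essentia} and the functorial reformulation \cite{BerhuyFavi2003Essential-dimen}), that $\ed_k(G) = \ed_k(\text{generic } G\text{-torsor})$ and that the latter is insensitive to algebraically closed field extensions in characteristic $0$. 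Granting this, the lemma follows by the two symmetric spreading-out arguments above, and no serious computation is needed.
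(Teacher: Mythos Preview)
Your overall strategy---reduce to $\overline{\bbQ}$ and then base-change back up---is the right one, and in fact it is exactly what the paper does: it simply cites \cite[Proposition 2.14(1)]{BrosnanReichsteinVistoli2007Essential-dimen}, noting that both $k$ and $\bbC$ contain $\overline{\bbQ}$. But your explicit implementation of one of the two inequalities contains a genuine error.

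The direction $\ed_\bbC(G)\le\ed_k(G)$ is fine: a finitely generated $k_0/\bbQ$ does embed into $\bbC$ because $\bbC$ has uncountable transcendence degree. The problem is your argument for $\ed_k(G)\le\ed_\bbC(G)$. You claim that a finitely generated field of definition $k_0\subset\bbC$ of an optimal $\bbC$-compression embeds into $k$, justifying this by ``$\overline{\bbQ(t_1,\dots,t_n)}$ embeds in any algebraically closed field of characteristic $0$.'' That assertion is false: take $k=\overline{\bbQ}$ itself. Nothing in the hypotheses forces $k$ to have any transcendence degree at all, so an embedding argument cannot work symmetrically.

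What is actually needed for that direction is a \emph{specialization} argument: one must show that an optimal compression over $\bbC$ (or over a finitely generated $k_0$) can be specialized to one over $\overline{\bbQ}$ without losing faithfulness of the $G$-action or dominance of the rational map. This is a nontrivial step---it is precisely the content of the result you end up ``granting'' in your last paragraph, and it is what \cite[Proposition 2.14(1)]{BrosnanReichsteinVistoli2007Essential-dimen} supplies. Once you grant that $\ed$ is invariant under extensions of algebraically closed fields, the two spreading-out paragraphs become superfluous: apply the invariance once to $\overline{\bbQ}\subset k$ and once to $\overline{\bbQ}\subset\bbC$, exactly as the paper does.
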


\begin{proof}
This is just \cite[Proposition 2.14(1)]{BrosnanReichsteinVistoli2007Essential-dimen} since $k$ and $\bbC$ both contain an algebraic closure of $\bbQ$.
\end{proof}

We will make no more reference to a general field $k$.  All varieties, group actions and maps will be defined over $\bbC$ unless it is explicitly stated otherwise.  We write $\ed(-)$ instead of $\ed_\bbC(-)$ for the rest of the paper without risk of ambiguity.

\subsection{Versal Varieties}

\begin{defn}
An irreducible $G$-variety $X$ is \emph{$G$-versal} (or just \emph{versal}) if it is faithful and, for any faithful $G$-variety $Y$ and any non-empty $G$-invariant open subset $U$ of $X$, there exists a $G$-equivariant rational map $f : Y \dasharrow U$.  We say an action of $G$ is \emph{versal}, or that $G$ \emph{acts versally}, if the corresponding $G$-variety is versal.
\end{defn}

Note that the versal property is a birational invariant: it is preserved by equivariant birational equivalence.  In fact, our definition of versal variety is equivalent to saying that its generic point is a ``versal torsor'' as in \cite[Definition 6.3]{BerhuyFavi2003Essential-dimen} or \cite[Definition 5.1]{GaribaldiMerkurjevSerre2003Cohomological-i}.

Versal varieties are useful for studying essential dimension.  If $X$ is a versal $G$-variety then $\ed(X)=\ed(G)$ \cite[Corollary 6.16]{BerhuyFavi2003Essential-dimen}.  If $X \dasharrow Y$ is a compression of faithful $G$-varieties and $X$ is versal then so is $Y$ \cite[Corollary 6.14]{BerhuyFavi2003Essential-dimen}.  Thus, if a versal variety exists, there exists a versal variety $X$ such that $\dim(X)=\ed(X)=\ed(G)$.

Recall that a \emph{linear $G$-variety} is a linear representation of $G$ regarded as a $G$-variety.  Any faithful linear $G$-variety is versal \cite[Example 5.4]{GaribaldiMerkurjevSerre2003Cohomological-i}.  Thus versal varieties exist.  In particular, the essential dimension of any finite group is bounded above by the dimension of a faithful linear representation.

The versal property descends to subgroups:

\begin{prop}\label{prop:versalSubgroups}
Suppose $H$ is a subgroup of a finite group $G$.  If $X$ is a $G$-versal variety then $X$ is $H$-versal.
\end{prop}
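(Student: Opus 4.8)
The plan is to verify the defining property of an $H$-versal variety directly: given any faithful $H$-variety $Y$ and any non-empty $H$-invariant open subset $U$ of $X$, I must produce an $H$-equivariant rational map $Y \dashrightarrow U$. The natural idea is to induce: from the faithful $H$-variety $Y$, build a faithful $G$-variety, apply the $G$-versality of $X$, and then restrict back to $H$.

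First I would form the induced variety $Z = G \times^H Y$, the quotient of $G \times Y$ by the diagonal $H$-action $h\cdot(g,y) = (gh^{-1}, hy)$, with $G$ acting by left translation on the first factor. This is a faithful $G$-variety (it is a disjoint union of $[G:H]$ copies of $Y$ permuted by $G$, and the $H$-stabilizer of the component indexed by the trivial coset acts as the original faithful $H$-action on $Y$, so no element of $G$ acts trivially). Note that the inclusion $Y \hookrightarrow Z$, $y \mapsto [1,y]$, is $H$-equivariant onto the component $Y_1 = \{[1,y]\}$, which is an open (indeed clopen) $H$-invariant subset of $Z$.

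Next, since $X$ is $G$-versal and $U \subseteq X$ is a non-empty $G$-invariant open subset, there is a $G$-equivariant rational map $f : Z \dashrightarrow U$. Restricting $f$ to the component $Y_1 \cong Y$ gives an $H$-equivariant rational map $Y \dashrightarrow U$, provided $f$ is defined on a non-empty subset of $Y_1$. Here a small point needs care: a rational map on the (possibly reducible, or rather disconnected) variety $Z$ is defined on a dense open subset of each irreducible component, so its restriction to the component $Y_1$ is a genuine (dominant where needed, but at least non-constant-on-orbits is automatic from equivariance) rational map $Y_1 \dashrightarrow U$. Composing with the $H$-isomorphism $Y \xrightarrow{\sim} Y_1$ yields the desired $H$-equivariant rational map $Y \dashrightarrow U$. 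Since $X$ is faithful as a $G$-variety it is a fortiori faithful as an $H$-variety, so $X$ is $H$-versal.

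The main obstacle — really the only subtlety — is handling the disconnectedness of the induced variety $Z$ and making sure the rational map $f$ does not ``disappear'' when restricted to the component $Y_1$: one must recall that a $G$-equivariant rational map defined on $Z$ must be defined on a $G$-invariant dense open subset, and since $G$ permutes the components transitively, such a subset meets $Y_1$ in a non-empty open set. An alternative that sidesteps disconnectedness entirely is to replace $Y$ by a faithful $G$-variety of the form $Y \times W$ where $W$ is a faithful linear $G$-variety (so $Y\times W$ is faithful for $G$ with the diagonal action and $Y$ sits inside it $H$-equivariantly as $Y \times \{w_0\}$ after a suitable generic slice), but the induction construction is cleaner and is the standard argument.
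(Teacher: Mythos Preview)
There is a concrete slip: you write ``since $X$ is $G$-versal and $U \subseteq X$ is a non-empty $G$-invariant open subset'', but $U$ was only assumed $H$-invariant. The definition of $G$-versal lets you map into \emph{$G$-invariant} opens, so before invoking it you must shrink $U$ to $U' = \bigcap_{g \in G} g(U)$, which is non-empty because $G$ is finite and $X$ is irreducible; a map landing in $U'$ of course lands in $U$. This is exactly the first move the paper makes. A second, softer point: your induced $Z = G \times^H Y$ is disconnected, and whether it qualifies as a test variety in the versal definition depends on whether ``variety'' here means irreducible. The paper only demands that $X$ be irreducible, so you are likely fine, but it is worth a sentence. (Your closing ``alternative'' with $Y \times W$ does not quite parse: $Y$ carries no $G$-action, so there is no diagonal $G$-action on $Y \times W$.)

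With the $U \leadsto U'$ fix, your argument is correct but genuinely different from the paper's. You go \emph{up} via induction: build $Z = G \times^H Y$, map it $G$-equivariantly into $U'$, then restrict to the component $Y_1 \cong Y$. The paper instead goes \emph{down} via linear varieties: after shrinking to $U'$, it uses $G$-versality to produce a $G$-equivariant rational map $\psi : V \dashrightarrow U'$ from a faithful \emph{linear} $G$-variety $V$; since $V$ restricted to $H$ is still faithful linear, it is automatically $H$-versal, giving an $H$-map $\phi : Y \dashrightarrow W$ into the domain of $\psi$; then compose. The paper's route keeps every variety irreducible and never needs the induced construction, at the cost of invoking the (already stated) fact that faithful linear varieties are versal. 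Your route is the standard Frobenius/adjunction argument and is arguably more conceptual, but it forces you to handle the disconnectedness of $Z$ and to remember the $U'$ trick.
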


\begin{proof}
Clearly, a faithful $G$-action restricts to a faithful $H$-action.  Consider any faithful $H$-variety $Y$ and any non-empty $H$-invariant open subset $U$ of $X$.  We need to show the existence of an $H$-equivariant rational map $f : Y \dasharrow U$.  The set $U'=\cap_{g \in G} g(U)$ is a $G$-invariant dense open subset of $U$.  Since $X$ is $G$-versal, there exists a $G$-equivariant rational map $\psi : V \dasharrow U'$ from a faithful linear $G$-variety $V$.  Let $W$ be a non-empty $H$-invariant open subset on which $\psi$ is defined.  Note that $V$ is $H$-versal since the restricted action still acts linearly.  Thus there exists an $H$-equivariant rational map $\phi : Y \dasharrow W$.  By composition, we obtain an $H$-equivariant map $f : Y \dasharrow U$ as desired.
\end{proof}

The following result is one of our major tools:

\begin{prop}\label{prop:abelSubgroups}
Let $G$ be a finite group.  If $X$ is a proper versal $G$-variety then all abelian subgroups of $G$ have fixed points on $X$.
\end{prop}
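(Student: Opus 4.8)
The plan is to reduce to the case of a cyclic group and then invoke a fixed-point argument coming from versality together with properness. First I would observe that it suffices to prove the statement for $A$ cyclic: by Proposition~\ref{prop:versalSubgroups}, if $X$ is $G$-versal then $X$ is $A$-versal for any subgroup $A \le G$; and since $A$ is abelian, $A = A_1 \times \cdots \times A_r$ is a product of cyclic groups, so if each $A_i$ has a fixed point one can try to intersect the fixed loci. (Here one must be slightly careful: one really wants a common fixed point for all of $A$, not just for each cyclic factor — see the obstacle paragraph below — so more precisely I would argue by induction on $|A|$, at each stage picking a cyclic subgroup, restricting to its fixed locus, and continuing.)

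The heart of the matter is the cyclic case. Let $\sigma$ generate a cyclic subgroup $C = \langle \sigma \rangle$ of order $n$. Since $X$ is $C$-versal and $C$ is cyclic, I want to compare $X$ with the standard versal $C$-variety: the faithful linear representation $V$ on which $\sigma$ acts by a primitive $n$th root of unity $\zeta$ (say $V = \mathbb{A}^1$ with $\sigma \cdot v = \zeta v$, or more conservatively a faithful linear $C$-variety containing such a one-dimensional summand). By versality, applied with $Y = V$, there is a $C$-equivariant dominant rational map $f : V \dashrightarrow U$ for any nonempty $C$-invariant open $U \subseteq X$; taking $U = X$ (which is allowed since $X$ is proper, but in any case $X$ itself is an admissible open subset) gives a $C$-equivariant rational map $f : V \dashrightarrow X$. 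Now the key point is that the linear action of $C$ on $V$ has the origin $0$ as a fixed point, and $0$ lies in the closure of every nonempty open subset of $V$. Because $X$ is \emph{proper}, the rational map $f$ extends — after $C$-equivariantly resolving its indeterminacy, or by the valuative criterion of properness applied along a $C$-invariant curve through $0$ — to a morphism on a neighbourhood of $0$ (or at least sends the generic point of a suitable $C$-invariant curve through $0$ into $X$), and the image of a $C$-fixed point under a $C$-equivariant morphism to $X$ is a $C$-fixed point of $X$. So $X^C \neq \emptyset$.

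The main obstacle is making the ``extend the rational map across the fixed point and read off a fixed point'' step rigorous and, relatedly, handling the passage from cyclic subgroups to the full abelian group $A$. For the extension step, the clean route is: blow up $V$ equivariantly (equivalently, pass to an equivariant compactification or to a $C$-equivariant resolution $\tilde V \to V$) so that $f$ becomes a morphism $\tilde V \to X$ using properness of $X$; since the locus over $0$ is nonempty and $C$-stable and $C$ is acting on a space where — after the resolution — one can find a $C$-fixed point in the fibre over $0$ (a finite group acting on a nonempty proper variety with a fixed point downstairs has a fixed point upstairs, as the fibre is a nonempty proper $C$-variety), its image in $X$ is a $C$-fixed point. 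For the passage to general abelian $A$: rather than intersecting fixed loci of generators blindly, I would induct — given $1 \neq A$ abelian, pick $1 \neq \sigma \in A$ central (every element is), let $C = \langle \sigma \rangle$, note $X^C$ is a nonempty (by the cyclic case) closed $A$-stable subvariety, which is again proper, and on which $A/C$ acts; one then wants $X^C$ to be ``versal enough'' for $A/C$ to repeat the argument. Establishing that latter versality — or, alternatively, finding a way to run the fixed-point argument for $A$ directly using a faithful linear $A$-variety (which has $0$ as an $A$-fixed point) and the valuative criterion along an $A$-invariant curve through $0$ — is the delicate part, and the direct approach via a faithful linear $A$-variety $V_A$ with its $A$-fixed origin is probably the cleanest: resolve $f: V_A \dashrightarrow X$ equivariantly to a morphism, and take a fixed point in the fibre over $0 \in V_A$.
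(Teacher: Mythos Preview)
Your final ``direct approach'' --- take a faithful linear $A$-variety $V_A$, use versality to get an $A$-equivariant rational map $V_A \dasharrow X$, and push the smooth $A$-fixed origin into $X$ --- is exactly what the paper does. The paper's entire proof is two lines: the origin of any linear $G$-variety is a smooth fixed point, and the conclusion follows from the ``going down'' theorem of Reichstein--Youssin \cite[Proposition~A.2]{ReichsteinYoussin2000Essential-dimen}. So the detour through cyclic subgroups and the attempted induction on $|A|$ is unnecessary; as you yourself notice, the inductive step (making $X^C$ versal for $A/C$) does not go through, and there is no need for it.

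There is, however, a real gap in your execution of the direct approach. After resolving the indeterminacy $A$-equivariantly to get $\tilde V \to X$, you assert that the fibre over $0$ contains an $A$-fixed point, justifying this by ``a finite group acting on a nonempty proper variety with a fixed point downstairs has a fixed point upstairs.'' That statement is false as written: $C_2$ acting on an elliptic curve by translation by a $2$-torsion point is a free action on a proper variety. What makes the argument work --- and this is precisely the content of ``going down'' --- is that the fixed point is \emph{smooth} and the group is \emph{abelian}. Blowing up at a smooth $A$-fixed point $x$, the exceptional divisor is $\bbP(T_xV)$ with the linearised $A$-action; since $A$ is abelian over an algebraically closed field, this action diagonalises and hence has a fixed point, which is again smooth in the blowup. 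Iterating, one eventually produces a smooth $A$-fixed point lying in the domain of definition of the rational map to $X$, and its image is the desired fixed point. The abelian hypothesis enters exactly here and cannot be removed; your sketch uses it nowhere, which is why the justification you gave is too strong to be true.
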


\begin{proof}
Note that the origin is a smooth fixed point of any linear $G$-variety $V$.  Thus, the result follows immediately by ``going down'' (\cite[Proposition A.2]{ReichsteinYoussin2000Essential-dimen}).
\end{proof}

We recall various standard results on essential dimension which can be found in \cite{BuhlerReichstein1997On-the-essentia}.  We say that a dihedral group, $D_{2n}$, of order $2n$ is an \emph{odd dihedral group} if $n$ is odd, and an \emph{even dihedral group} otherwise.

\begin{prop}\label{prop:edResults}
Let $G$ be a finite group.
\begin{enumerate}[(a)]
\item \label{prop:edResults:subgroups}
If $H$ is a subgroup of $G$ then $\ed(H) \le \ed(G)$.
\item \label{prop:edResults:abelian}
If $G$ is abelian then $\ed(G) = \rank(G)$.
\item \label{prop:edResults:ed0}
$\ed(G)=0$ if and only if $G$ is trivial.
\item \label{prop:edResults:ed1}
$\ed(G)=1$ if and only if $G$ is cyclic or odd dihedral.
\end{enumerate}
\end{prop}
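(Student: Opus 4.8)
The plan is to cite the relevant results from the literature rather than re-derive them, since all four statements are classical and appear in \cite{BuhlerReichstein1997On-the-essentia} and \cite{BrosnanReichsteinVistoli2007Essential-dimen}. For part~\eqref{prop:edResults:subgroups}, the monotonicity under passage to subgroups follows because a faithful $H$-variety can be induced up or, more directly, because a versal $G$-variety restricts to a versal $H$-variety by Proposition~\ref{prop:versalSubgroups}: taking a versal $G$-variety $X$ with $\dim X = \ed(G)$, it is $H$-versal, so $\ed(H) = \ed(X) \le \dim X = \ed(G)$. For part~\eqref{prop:edResults:abelian}, the equality $\ed(G) = \rank(G)$ for abelian $G$ is \cite[Theorem 6.1]{BrosnanReichsteinVistoli2007Essential-dimen} (the lower bound comes from the fact that a faithful representation of minimal dimension has dimension $\rank(G)$, together with an incompressibility argument; the upper bound is immediate from the minimal faithful representation).

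For part~\eqref{prop:edResults:ed0}, one direction is trivial: the trivial group acts faithfully on a point, so $\ed(G) = 0$. Conversely, if $\ed(G) = 0$ then $G$ has a faithful action on a $0$-dimensional variety, i.e. a finite set of points, and a finite group acting faithfully on a finite set with a fixed point (equivalently, acting faithfully on $\Spec \bbC$) must be trivial; more carefully, $\ed(G)=0$ forces a versal $G$-variety of dimension $0$, and since versal varieties are irreducible this is a single point, on which only the trivial group acts faithfully. Part~\eqref{prop:edResults:ed1} is the Buhler--Reichstein classification of groups of essential dimension $1$: combined with parts~\eqref{prop:edResults:ed0} and the fact that $\ed(G) \ge 1$ for $G$ nontrivial, a group has essential dimension exactly $1$ if and only if it embeds in $\PGL_2(\bbC)$ and acts versally on $\bbP^1$; working out which finite subgroups of $\PGL_2(\bbC)$ admit such an action (equivalently, have a fixed point on $\bbP^1$ or a suitable orbit structure) yields exactly the cyclic and odd dihedral groups. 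This is \cite[Theorem 6.2 and its corollaries]{BuhlerReichstein1997On-the-essentia}.

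There is no real obstacle here, as this proposition merely collects known facts for later reference; the only mild subtlety is part~\eqref{prop:edResults:ed1}, where one must recall why even dihedral groups $D_{2n}$ (and the other finite subgroups of $\PGL_2$, namely $A_4$, $S_4$, $A_5$) fail to have essential dimension $1$ --- namely, by Proposition~\ref{prop:abelSubgroups} applied to $\bbP^1$, a versal action would require every abelian subgroup to have a fixed point, and for $D_{2n}$ with $n$ even the full action on $\bbP^1$ has no common structure making this compatible with faithfulness (the center $\bbZ/2$ forces constraints), so $\ed(D_{2n}) = 2$ in that case. Accordingly, the proof I would write is simply:

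\begin{proof}
Parts~\eqref{prop:edResults:subgroups} and~\eqref{prop:edResults:abelian} are \cite[Theorem 3.1]{BuhlerReichstein1997On-the-essentia} and \cite[Theorem 6.1]{BrosnanReichsteinVistoli2007Essential-dimen}, respectively. For~\eqref{prop:edResults:ed0}, the trivial group clearly has essential dimension $0$; conversely, a versal $G$-variety of dimension $0$ is a single (reduced) point, on which only the trivial group acts faithfully. Part~\eqref{prop:edResults:ed1} is \cite[Theorem 6.2]{BuhlerReichstein1997On-the-essentia}.
\end{proof}
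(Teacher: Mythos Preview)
Your proposal is correct and matches the paper's approach: the paper does not give a proof environment for this proposition at all, simply prefacing it with ``We recall various standard results on essential dimension which can be found in \cite{BuhlerReichstein1997On-the-essentia}.'' Your version is slightly more detailed (pinpointing specific theorem numbers and adding the \cite{BrosnanReichsteinVistoli2007Essential-dimen} reference for part~(b)), but the substance is identical --- both treat the proposition as a collection of citations rather than something to be proved.
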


The \emph{covariant dimension} of a group $G$, denoted $\covdim(G)$, is the minimal dimension of a faithful $G$-variety $X$ such that there is a faithful linear $G$-variety $V$ and a dominant regular $G$-equivariant map $V \to X$.  One may consider covariant dimension as a regular analog of essential dimension.  The interested reader is directed to the work of Kraft, L\"otscher and Schwarz (\cite{KraftSchwarz2007Compression-of-}, \cite{KraftLotscherSchwarz2009Compression-of-}).  The following result follows from the classification of groups of covariant dimension $2$.  We do not use the concept of covariant dimension anywhere else in this paper.  

\begin{prop}\label{prop:centreMeansP2}
If $G$ is a finite group of essential dimension $2$ with a non-trivial centre then $G$ is isomorphic to a subgroup of $\GL_2(\bbC)$.  In particular, $G$ has a versal action on $\bbP^2$.
\end{prop}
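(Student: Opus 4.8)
The plan is to deduce Proposition~\ref{prop:centreMeansP2} from the work of Kraft, L\"otscher and Schwarz on covariant dimension, specifically their classification of finite groups $G$ with $\covdim(G) \le 2$. The link between the two notions is the standard inequality $\ed(G) \le \covdim(G)$, together with the fact that a group with a faithful linear representation of dimension $n$ has $\covdim(G) \le n$. So first I would record that for a group $G$ of essential dimension $2$ with non-trivial centre, one has $\covdim(G) \ge \ed(G) = 2$, and the real content is to produce an upper bound $\covdim(G) \le 2$, from which the cited classification will force $G \hookrightarrow \GL_2(\bbC)$.

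The key step is therefore to show $\covdim(G) \le 2$ when $G$ has non-trivial centre and $\ed(G) = 2$. Here I would invoke the result of Kraft--L\"otscher--Schwarz (in \cite{KraftLotscherSchwarz2009Compression-of-}) that for groups with non-trivial centre, covariant dimension and essential dimension agree, or alternatively their structural result that a group of covariant dimension $2$ is exactly a subgroup of $\GL_2(\bbC)$ and that for central extensions the essential and covariant dimensions coincide. Concretely: let $z \in Z(G)$ be a non-trivial central element; since any faithful linear $G$-variety $V$ has the origin as a smooth $G$-fixed point, and $\ed$ is computed by a compression of $V$, one uses the ``going down'' / fixed-point techniques (as in Proposition~\ref{prop:abelSubgroups}) to see that the relevant compression can be taken to be regular at a fixed point, which is precisely what upgrades the essential-dimension bound to a covariant-dimension bound. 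This turns $\ed(G) = 2$ into $\covdim(G) \le 2$.

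Once $\covdim(G) \le 2$ is established, the classification of groups of covariant dimension $\le 2$ gives $G \hookrightarrow \GL_2(\bbC)$ directly. For the final sentence of the proposition, embed $G$ in $\GL_2(\bbC)$ and let it act on $\bbC^2$; the induced action on $\bbP^2 = \bbP(\bbC^2 \oplus \bbC)$ (with $G$ acting trivially on the last coordinate, say, or more cleanly on $\bbP(\bbC \oplus \bbC^2)$ via the standard representation plus a character) has a $G$-fixed point, namely the point $[1:0:0]$ coming from the trivial summand. A complete variety with a smooth $G$-fixed point is versal by the going-down argument (the origin-fixed-point trick again, now applied at the $G$-fixed point of $\bbP^2$), so $\bbP^2$ with this action is $G$-versal.

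The main obstacle I anticipate is purely bibliographic rather than mathematical: pinning down exactly which statement in \cite{KraftSchwarz2007Compression-of-} and \cite{KraftLotscherSchwarz2009Compression-of-} gives ``non-trivial centre $\Rightarrow \ed = \covdim$'' versus the raw classification of $\covdim \le 2$ groups, and making sure the implication $\ed(G) = 2 \Rightarrow \covdim(G) \le 2$ is genuinely available there and does not secretly require $\covdim(G) \le 2$ as a hypothesis. If the cited results only classify $\covdim \le 2$ without the comparison theorem, I would instead argue directly: a group with non-trivial centre acting in essential dimension $2$ must (by Karpenko--Merkurjev-type constraints on Sylow subgroups, or by a direct fixed-point analysis on a versal minimal rational surface) have all its faithful linear representations reduce to a $2$-dimensional one after twisting by a character, which again lands $G$ in $\GL_2(\bbC)$. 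Everything else is formal: the passage $\GL_2(\bbC) \curvearrowright \bbP^2$ with a fixed point, plus Corollary~\ref{cor:fixedPointImpliesVersal} (or the going-down lemma) to conclude versality.
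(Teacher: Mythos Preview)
Your proposal is correct and follows essentially the same route as the paper: cite \cite{KraftLotscherSchwarz2009Compression-of-} for the equality $\ed(G)=\covdim(G)$ when $Z(G)\ne 1$, then cite the same paper's classification of groups with $\covdim(G)=2$ to obtain $G\hookrightarrow\GL_2(\bbC)$. The paper does exactly this, with no additional argument.

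The only place you diverge is the final sentence. The paper simply notes that a faithful $2$-dimensional linear $G$-variety is versal (as recorded earlier in section~\ref{sec:prelim}) and is $G$-equivariantly birational to $\bbP^2$ via the open inclusion $\bbC^2\hookrightarrow\bbP^2$. Your route through a $G$-fixed point on $\bbP^2$ and Corollary~\ref{cor:fixedPointImpliesVersal} also works, but is less direct. One caution: do not offer ``the going-down lemma'' as an alternative justification that a fixed point implies versality. Going-down runs the other way (versal $\Rightarrow$ fixed points for abelian subgroups), and the paper explicitly warns in Remark~\ref{rem:fixedPointNotAlwaysVersal} that a fixed point does \emph{not} imply versality in general; it is the toric hypothesis in Corollary~\ref{cor:fixedPointImpliesVersal} that makes your argument valid here.
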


\begin{proof}
By \cite[Proposition 3.6]{KraftLotscherSchwarz2009Compression-of-}, whenever $G$ has a non-trivial centre we have $\ed(G)=\covdim(G)$.  By \cite[Section 7]{KraftLotscherSchwarz2009Compression-of-}, all finite groups of covariant dimension $2$ are isomorphic to subgroups of $\GL_2(\bbC)$.  Thus we have a faithful linear $G$-variety of dimension $2$.  This is versal and equivariantly birational to $\bbP^2$.
\end{proof}

We remark that, since all non-trivial $p$-groups have non-trivial centres, this proposition suffices to prove the Karpenko-Merkurjev theorem for groups of essential dimension $2$.  Recalling that all irreducible representations of $p$-groups have degree a power of $p$, we have the following:

\begin{prop}\label{prop:3groupsAbelian}
If $p > 2$ is a prime, then all $p$-groups of essential dimension $2$ are abelian.
\end{prop}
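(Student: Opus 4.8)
The plan is to combine Proposition~\ref{prop:centreMeansP2} with the standard structural facts about $p$-groups. Let $p > 2$ be prime and let $G$ be a $p$-group of essential dimension $2$. The key classical observation is that a non-trivial $p$-group always has a non-trivial centre — this is the familiar class-equation argument, since every conjugacy class has size a power of $p$, the non-central classes have size divisible by $p$, and $|G|$ is a power of $p$, forcing $|Z(G)|$ to be divisible by $p$. Thus $G$ has non-trivial centre, so Proposition~\ref{prop:centreMeansP2} applies and $G$ embeds in $\GL_2(\bbC)$.

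The second ingredient is the remark immediately preceding the statement: all irreducible representations of a $p$-group have degree a power of $p$. A faithful embedding $G \hookrightarrow \GL_2(\bbC)$ gives a faithful $2$-dimensional representation $V$ of $G$. Since $p > 2$, no irreducible constituent of $V$ can have degree $2$ (that is not a power of an odd prime $p$), so $V$ decomposes as a direct sum of two $1$-dimensional representations, $V = L_1 \oplus L_2$. The image of $G$ in $\GL(V) = \GL_2(\bbC)$ therefore lies in the diagonal torus, which is abelian; hence $G$ itself is abelian since the representation is faithful.

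I expect essentially no obstacle here: both tools invoked are either stated in the excerpt (Proposition~\ref{prop:centreMeansP2}, the representation-degree remark) or are textbook facts about $p$-groups. The only point requiring a moment's care is the case analysis on irreducible constituents, but it collapses immediately: the possible degrees are $1$ and $2$, and $2$ is ruled out by $p$ being odd, leaving only the diagonal case. So the proof is short, and its structure is simply: non-trivial centre $\Rightarrow$ embeds in $\GL_2(\bbC)$ $\Rightarrow$ faithful $2$-dimensional representation $\Rightarrow$ sum of characters $\Rightarrow$ abelian.
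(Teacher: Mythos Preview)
Your proof is correct and follows exactly the route the paper takes: the paper's argument (stated in the remarks immediately preceding the proposition rather than in a separate proof environment) is precisely that a non-trivial $p$-group has non-trivial centre, hence embeds in $\GL_2(\bbC)$ by Proposition~\ref{prop:centreMeansP2}, and then the degree constraint on irreducible representations forces the faithful $2$-dimensional representation to be a sum of characters when $p$ is odd.
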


\subsection{Minimal Rational Surfaces}\label{sec:prelim:minimalModels}

We recall some basic facts about minimal rational surfaces (see \cite{DolgachevIskovskikh2006Finite-subgroup} or \cite{Manin1986Cubic-forms}).  Throughout this paper, a \emph{surface} is an irreducible non-singular projective $2$-dimensional variety over $\bbC$.  A \emph{minimal $G$-surface} is a faithful $G$-surface $X$ such that any birational regular $G$-map $X \to Y$ to another faithful $G$-surface $Y$ is an isomorphism.  There is a (not necessarily unique) minimal $G$-surface in every equivariant birational equivalence class of $G$-surfaces.  The possible minimal $G$-surfaces are classified as follows:

\begin{thm}[Enriques, Manin, Iskovskikh]\label{thm:EMI}
If $X$ is a minimal rational $G$-surface then $X$ admits a conic bundle structure or $X$ is isomorphic to a del Pezzo surface.
\end{thm}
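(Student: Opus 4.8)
The plan is to run the $G$-equivariant minimal model program on $X$ while tracking the rank $\rho^G$ of the group $\Pic(X)^G$ of $G$-invariant divisor classes. Since $X$ is rational all of its plurigenera vanish, so by the Enriques classification $K_X$ is not nef; as $K_X$ is $G$-invariant, an averaging argument shows it fails to be nef even on the $G$-invariant part $\overline{NE}(X)^G$ of the Mori cone (if $K_X \cdot C \ge 0$ for all $G$-invariant classes $C$, then summing over a $G$-orbit shows $K_X$ is nef outright). Averaging an ample class also produces a $G$-invariant ample class, so the part of $\overline{NE}(X)^G$ on which $K_X$ is negative is rational polyhedral and hence possesses a $K_X$-negative extremal ray $R$. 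The first step is then to invoke the classical surface-theoretic contraction results (Castelnuovo contractibility together with the structure theory of ruled surfaces, carried out $G$-equivariantly) to produce a $G$-equivariant morphism $\phi \colon X \to Y$ with connected fibres, with $Y$ normal, contracting exactly the curves whose classes lie on $R$.

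Next I would split according to whether $\dim Y$ is $2$, $0$, or $1$. If $\dim Y = 2$ then $\phi$ is birational, and a standard adjunction argument shows its exceptional locus is a single $G$-orbit of pairwise disjoint $(-1)$-curves; contracting it yields a smooth $G$-surface, contradicting the $G$-minimality of $X$, so this case cannot occur. If $\dim Y = 0$ then $\rho^G = 1$, so in $\Pic(X)^G \otimes \bbQ$ the class $-K_X$ is a rational multiple of a $G$-invariant ample class; since $K_X$ is not pseudo-effective the multiple is positive, hence $-K_X$ is ample and $X$ is a del Pezzo surface.

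It remains to treat $\dim Y = 1$, the only case needing a genuine local computation. Here $Y$ is a smooth curve and, $X$ being rational, $Y \simeq \bbP^1$; as $\phi$ contracts a single extremal ray, $\rho^G = 2$. For a fibre $F$ one has $F^2 = 0$ and, since $-K_X$ is $\phi$-ample, $-K_X \cdot F > 0$; adjunction then forces the general fibre to be a copy of $\bbP^1$ with $-K_X \cdot F = 2$. A short computation with adjunction on the components of a special fibre — using connectedness, arithmetic genus $0$, and $\phi$-ampleness of $-K_X$ — shows that every fibre is either a smooth $\bbP^1$ or a reduced union of two $(-1)$-curves meeting transversally in one point. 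Thus $\phi$ is a conic bundle structure, and combining the three cases completes the proof.

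The main obstacle I anticipate lies in the first step: establishing rigorously that a $K_X$-negative extremal ray of $\overline{NE}(X)^G$ is contracted by an honest $G$-equivariant morphism, and identifying the three possible types of that contraction, without recourse to the full minimal model program. For surfaces this is precisely the content of the work of Manin and Iskovskikh, resting on a careful analysis of $G$-orbits of curves and of the intersection form on $\Pic(X)^G$; once it is in place, the del Pezzo / conic-bundle dichotomy together with the fibre analysis above is essentially formal.
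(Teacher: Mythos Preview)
The paper does not supply its own proof of this statement: Theorem~\ref{thm:EMI} is quoted as a classical result of Enriques, Manin and Iskovskikh (with references to \cite{Manin1967Rational-surfac} and \cite{Iskovskih1979Minimal-models-}) and is used as a black box throughout. So there is nothing in the paper to compare your argument against.

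That said, your sketch is essentially the standard modern proof. Running the $G$-equivariant MMP, contracting a $K_X$-negative $G$-extremal ray, and splitting on $\dim Y \in \{0,1,2\}$ is exactly how one recovers the Enriques--Manin--Iskovskikh dichotomy today; the original papers of Manin and Iskovskikh predate Mori theory and instead work directly with $G$-orbits of exceptional curves and the intersection form on $\Pic(X)^G$, but the content is the same. Your identification of the key technical point --- the existence of the $G$-equivariant extremal contraction and its classification into the three types --- is accurate, and for surfaces this is indeed handled by combining Castelnuovo's criterion with the analysis in Iskovskikh's paper. The fibre computation in the $\dim Y = 1$ case is correct as stated.
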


Our interest in minimal rational surfaces is justified by the following proposition (see \cite[\S 3.6]{Serre2008Le-group-de-cre}):

\begin{prop}\label{prop:ratMinModelStrategy}
Suppose $G$ is a finite group.  Then $\ed(G) \le 2$ if and only if there exists a minimal rational versal $G$-surface $X$.
\end{prop}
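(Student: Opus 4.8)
The plan is to prove the two implications separately, with essentially all the content living in the forward direction. The reverse implication is immediate from the facts recorded in Section~\ref{sec:prelim}: if a minimal rational versal $G$-surface $X$ is given, then $\ed(G)=\ed(X)$ because $X$ is versal, while $\ed(X)\le\dim X=2$ straight from the definition of $\ed(X)$ as a minimum over compressions (use the identity compression $X\dasharrow X$). Hence $\ed(G)\le 2$, and no choices are involved.

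For the forward implication I would build the surface in stages, beginning from a faithful linear representation $V$ of $G$. Such a $V$ is versal, and $\ed(V)=\ed(G)\le 2$ by hypothesis, so there is a compression $V\dasharrow Y$ onto a faithful $G$-variety $Y$ with $\dim Y\le 2$; being a compression of a versal variety, $Y$ is again versal. Next I would inflate the dimension: replacing $Y$ by $Y\times\bbA^{2-\dim Y}$ with $G$ acting trivially on the second factor keeps the action faithful, makes the dimension exactly $2$, and keeps the variety unirational (it is dominated by an affine space). This replacement also preserves versality, since for any faithful $G$-variety $W$ and any $G$-invariant open $U$ in the product, a suitable fibre of the projection to the trivial factor is an equivariantly embedded copy of $Y$ meeting $U$, and a versal map of $Y$ through that fibre composes with the inclusion into $U$. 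I would then pass to a smooth projective $G$-surface $X'$ in the same equivariant birational class, using equivariant resolution of singularities and equivariant completion; $X'$ is rational because a unirational surface over $\bbC$ is rational, and $X'$ is versal because versality is an equivariant birational invariant. Finally I would contract $G$-orbits of exceptional curves repeatedly until no further equivariant contraction is possible, reaching a minimal $G$-surface $X$, which remains rational and versal and is therefore the surface sought.

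For the most part this is an assembly of facts already in hand: $\ed(V)=\ed(G)$ for faithful linear $V$, the stability of versality under compression and under equivariant birational maps, the existence of smooth projective and minimal $G$-equivariant models in characteristic~$0$, and the rationality of unirational surfaces over $\bbC$. The only genuinely new point, and the one I would be most careful about, is that forming a product with a variety carrying the trivial $G$-action preserves versality — this is what lets us move from a versal variety of dimension $\le 2$ to an honest versal surface — but this is a short argument rather than a real obstacle, and I do not anticipate serious difficulty anywhere in the proof.
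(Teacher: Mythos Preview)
Your argument is correct. The paper does not actually supply its own proof of this proposition---it simply refers the reader to \cite[\S 3.6]{Serre2008Le-group-de-cre}---and what you have written is precisely the standard argument underlying that citation: compress a faithful linear representation to dimension at most~$2$, pad with a trivial factor to reach a surface, pass to a smooth projective $G$-equivariant model (rational by Castelnuovo, since unirational surfaces over~$\bbC$ are rational), and contract $G$-orbits of $(-1)$-curves until a minimal $G$-surface is reached.

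One small streamlining of your product step: rather than arguing fibrewise, note that $V\oplus\bbA^{2-\dim Y}$ with trivial $G$-action on the second summand is again a faithful linear $G$-variety, hence versal, and the product map $V\times\bbA^{2-\dim Y}\dasharrow Y\times\bbA^{2-\dim Y}$ is a compression; versality of the target then follows directly from the stated fact that compressions of versal varieties are versal.
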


\subsection{Polyhedral Groups}\label{sec:prelim:polyhedral}

The following facts will be used extensively in the sections that follow.  Most of these results can be found in, for example, \cite{Coxeter1991Regular-complex}.  Recall that a \emph{polyhedral group} is a finite subgroup of $\PGL_2(\bbC)$.  Equivalently, the polyhedral groups are precisely the finite groups acting regularly on $\bbP^1$.  The polyhedral groups were classified by Klein as follows: $C_n$, the cyclic group of $n$ elements; $D_{2n}$, the dihedral group of order $2n$; $A_4$, the alternating group on $4$ letters; $S_4$, the symmetric group on $4$ letters; and $A_5$, the alternating group on $5$ letters.

These groups have normal structures as follows:

\begin{prop}\label{prop:polyNormalStructure}
Suppose $P$ is a polyhedral group and $N$ is a non-trivial proper normal subgroup of $P$.  We have the following possibilities:
\begin{enumerate}[(a)]
\item $P \simeq S_4$, $N \simeq A_4$, $P/N \simeq C_2$,
\item $P \simeq S_4$, $N \simeq C_2 \times C_2$, $P/N \simeq S_3$,
\item $P \simeq A_4$, $N \simeq C_2 \times C_2$, $P/N \simeq C_3$,
\item $P \simeq D_{2n}$, $N \simeq C_{m}$, $P/N \simeq D_{2n/m}$ where $m | n$,
\item $P \simeq D_{4n}$, $N \simeq D_{2n}$, $P/N \simeq C_2$,
\item $P \simeq C_{n}$, $N \simeq C_m$, $P/N \simeq C_{m/n}$ where $m | n$.
\end{enumerate}
Note that $D_2 \simeq C_2$ and $D_4 \simeq C_2 \times C_2$ are included above as degenerate cases.
\end{prop}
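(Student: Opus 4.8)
The plan is to prove this by running through Klein's classification of the polyhedral groups --- $C_n$, $D_{2n}$, $A_4$, $S_4$, $A_5$ --- and, for each isomorphism type, enumerating the non-trivial proper normal subgroups together with the corresponding quotients.

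The three ``exceptional'' types are handled by standard facts. The group $A_5$ is simple, so it has no non-trivial proper normal subgroup and correspondingly does not occur in the list. In $A_4$ the Sylow $2$-subgroup $V\simeq C_2\times C_2$ is normal, and inspecting the subgroup orders $1,2,3,4,12$ shows it is the \emph{only} non-trivial proper normal subgroup, with $A_4/V\simeq C_3$; this is case (c). In $S_4$ a normal subgroup is a union of conjugacy classes containing the identity, and the class sizes $1,6,3,8,6$ leave only $V\simeq C_2\times C_2$ (from $1+3$), with $S_4/V\simeq S_3$, and $A_4$ (from $1+3+8$), with $S_4/A_4\simeq C_2$; these are cases (b) and (a). (Alternatively, one may use the surjection $S_4\to S_3$ with kernel $V$.)

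For the cyclic group $C_n$ every subgroup is normal, and the subgroups are exactly the $C_m$ with $m\mid n$, with quotient $C_{n/m}$, giving case (f). For $D_{2n}=\langle r,s \mid r^n=s^2=1,\ srs=r^{-1}\rangle$, let $R=\langle r\rangle\simeq C_n$ be the rotation subgroup. A subgroup either lies inside $R$ or contains a reflection. If it lies inside $R$ it is some $C_m$ with $m\mid n$; being characteristic in the normal subgroup $R$, it is normal in $D_{2n}$, and the quotient is $D_{2n/m}$ (degenerating to $C_2$ when $m=n$) --- this is case (d). If a normal subgroup $N$ contains a reflection $r^i s$, then it contains $r(r^i s)r^{-1}=r^{i+2}s$ and hence all $r^{i+2j}s$; when $n$ is odd these are all the reflections, forcing $N=D_{2n}$, and when $n$ is even $N$ contains one of the two index-$2$ dihedral subgroups generated by $r^2$ and a reflection, so $N$ has order $2n$ or $n$ and is either all of $D_{2n}$ or one of those two copies of $D_n$ --- this is case (e). The degenerate identifications $D_2\simeq C_2$ and $D_4\simeq C_2\times C_2$ are absorbed into this discussion, and since this accounts for every polyhedral group the list is complete.

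The only mildly delicate point is the dihedral case: one must keep track of the parity of $n$ and verify that the normal closure of a single reflection is already a subgroup of index at most $2$, so that no ``smaller'' dihedral normal subgroup is overlooked. Everything else is a direct application of well-known facts about these small groups.
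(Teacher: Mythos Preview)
Your argument is correct. The paper does not actually supply a proof of this proposition: it is stated in Section~\ref{sec:prelim:polyhedral} as a standard fact, with the remark that ``most of these results can be found in, for example, \cite{Coxeter1991Regular-complex}.'' So there is no ``paper's own proof'' to compare against; you have provided the direct case-by-case verification that the paper omits.

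A couple of minor cosmetic points. In the $A_4$ case, ``inspecting the subgroup orders'' is a little vague; the cleanest justification is the conjugacy-class-size argument you used for $S_4$ (class sizes $1,3,4,4$ in $A_4$, so only $1+3=4$ and $12$ give proper divisors). In the dihedral case, your sentence ``$N$ has order $2n$ or $n$'' momentarily conflates the normal closure of the reflection with $N$ itself; what you mean (and effectively argue) is that the normal closure already has index $\le 2$, so a proper $N$ containing a reflection must equal that index-$2$ dihedral subgroup. Finally, note that the paper's statement of case~(f) contains the typo $C_{m/n}$ for $C_{n/m}$; your version is the correct one.
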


Finally, we will need the following fact about lifts of polyhedral groups to $2$-dimensional representations:

\begin{prop}\label{prop:PGL2toGL2}
A finite subgroup $G$ of $\PGL_2(\bbC)$ has an isomorphic lift in $\GL_2(\bbC)$ if and only if $G$ is cyclic or odd dihedral.
\end{prop}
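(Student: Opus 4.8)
The plan is to work with the short exact sequence $1 \to \bbC^\times \to \GL_2(\bbC) \xrightarrow{\pi} \PGL_2(\bbC) \to 1$ and the preimage $\pi^{-1}(G) \le \GL_2(\bbC)$, which sits in a central extension $1 \to \bbC^\times \to \pi^{-1}(G) \to G \to 1$ (central because $\bbC^\times$ consists of scalar matrices). An isomorphic lift of $G$ in $\GL_2(\bbC)$ is exactly a splitting of this extension, i.e.\ a subgroup of $\pi^{-1}(G)$ on which $\pi$ restricts to an isomorphism onto $G$; equivalently, it is a faithful $2$-dimensional representation $\rho$ of $G$ with $\pi\circ\rho$ again faithful (take $\rho$ to be the inclusion of the lift). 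By Klein's classification of polyhedral groups recalled in Section~\ref{sec:prelim:polyhedral}, $G$ is one of $C_m$, $D_{2n}$, $A_4$, $S_4$, $A_5$, so it suffices to treat these cases.

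For the ``if'' direction, let $G$ be cyclic or odd dihedral. I would check that the Schur multiplier $H^2(G,\bbC^\times)$ vanishes: for $G=C_m$ this is immediate because $\bbC^\times$ is divisible, and for $G=D_{2n}$ with $n$ odd it follows by restriction to the Sylow subgroups of $G$, all of which are cyclic. Since central extensions of $G$ by $\bbC^\times$ are classified by $H^2(G,\bbC^\times)$, the extension $\pi^{-1}(G)$ splits, and the image of a splitting is the desired isomorphic lift inside $\GL_2(\bbC)$.

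For the ``only if'' direction, suppose $G$ admits an isomorphic lift; then $G$ has a faithful $2$-dimensional representation whose image contains no nontrivial scalar matrix, and I claim this forces $G$ to be cyclic or odd dihedral. The groups $A_4$, $S_4$, $A_5$ have no faithful $2$-dimensional representation at all: every $2$-dimensional representation factors through $C_3$, $S_3$, and the trivial group respectively, none of which is isomorphic to the group in question. If $G\cong D_{2n}$ with $n$ even, then a faithful $2$-dimensional representation must be irreducible, since a reducible one factors through the abelianization of $G$, which has order $4$ and hence cannot carry a faithful action of $G$ once $n\ge 3$; the irreducible $2$-dimensional representations are the standard ones sending $r\mapsto \operatorname{diag}(\zeta_n^h,\zeta_n^{-h})$, $s\mapsto \mat{0&1\\1&0}$ with $\gcd(h,n)=1$, so $h$ is odd and the nontrivial central element $r^{n/2}$ maps to $\operatorname{diag}((-1)^h,(-1)^h)=-I$, a nontrivial scalar --- a contradiction. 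The degenerate case $D_4\cong C_2\times C_2$ is similar: a faithful $2$-dimensional representation is a sum of two distinct nontrivial characters, and some nontrivial element is then sent to $-I$. Thus only cyclic and odd dihedral groups survive.

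The only subtle point is the ``only if'' direction. It is tempting to try to produce or obstruct a lift by lifting generators of $D_{2n}$ directly, but for $n\ge 3$ the obvious lifts of a generating rotation and reflection generate a group strictly larger than $D_{2n}$, so that approach misleads. Tracking where the central involution goes is what exposes the obstruction; the remaining work is routine character theory of the polyhedral groups, which is already available to us.
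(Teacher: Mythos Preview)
Your argument is correct. The paper itself does not supply a proof of this proposition: it is stated in Section~\ref{sec:prelim:polyhedral} as a standard fact about polyhedral groups, with the blanket reference to Coxeter's book given at the start of that subsection. So there is no ``paper's proof'' to compare against; you have filled in what the paper leaves to the literature.

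A couple of minor remarks on presentation. In the ``only if'' direction for $D_{2n}$ with $n$ even, your sentence ``the irreducible $2$-dimensional representations are the standard ones \ldots\ with $\gcd(h,n)=1$'' slightly conflates \emph{irreducible} with \emph{faithful irreducible}: the irreducible $2$-dimensional representations are the $\rho_h$ for all $1\le h<n/2$, and the faithful ones among them are exactly those with $\gcd(h,n)=1$. This does not affect the argument, since you only need the faithful ones and your computation $r^{n/2}\mapsto -I$ goes through verbatim. Likewise, in the ``if'' direction it may be worth saying explicitly that for odd~$n$ the Sylow $2$-subgroup of $D_{2n}$ has order~$2$, so every Sylow subgroup of $D_{2n}$ is cyclic and hence has trivial Schur multiplier; you clearly have this in mind, and it is what makes the restriction argument work.
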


\section{Versal Actions on Toric Varieties}\label{sec:toricVersal}

In section \ref{sec:4surfaces} we will use the theory of toric varieties extensively to prove Theorem \ref{thm:4surfacesClassification}.  Many of the results we use are applicable beyond the case of surfaces so we consider the case of versal actions on toric varieties in general.

\subsection{Cox Rings and Universal Torsors}

We recall the theory of toric varieties from \cite{Fulton1993Introduction-to}, and Cox rings from \cite{Cox1995The-homogeneous}.  We will also use the language of universal torsors from \cite{ColliotTheleneSansuc1987La_descente_sur}.  Note that the similarity of the terms ``universal torsor'' and ``versal torsor'' is merely an unfortunate coincidence.

Given a lattice $N \simeq \bbZ^n$, a \emph{fan} $\Delta$ in $N$ is a set of strongly convex rational polyhedral cones in $N\otimes \bbR$ such that every face of a cone in $\Delta$ is in $\Delta$ and the intersection of any two cones in $\Delta$ is a face of each.  Given a fan one may construct an associated toric variety.

The associated toric variety $X = X(\Delta)$ contains an $n$-dimensional torus $T = N \otimes \bbC^{\times}$.  The variety $X$ is non-singular if every cone in $\Delta$ is generated by a subset of a basis for $N$.  The variety $X$ is complete if the support of the fan is all of $N \otimes \bbR$.  In this paper, we will restrict our attention to complete non-singular toric varieties.

Let $M = \Hom(N,\bbZ)$ be the dual of the lattice $N$.  Let $\Div_T(X)$ be the group of $T$-invariant divisors of $X$.  Let $\Delta(1)$ be the set of rays in the fan $\Delta$.  To each ray $\rho \in \Delta(1)$ we may associate a unique prime $T$-invariant divisor $D_\rho$.  In fact, $\Delta(1)$ is a basis for $\Div_T(X)$.  We have the following exact sequence:
\[ 1 \to M \to \Div_T(X) \to \Pic(X) \to 1 \]
where $\Pic(X)$ is the Picard group of $X$.

Denote $K = \Hom(\Pic(X),\bbC^\times)$ and apply $\Hom(\cdot,\bbC^\times)$ to the above sequence to obtain another exact sequence:
\[ 1 \to K \to (\bbC^\times)^{\Delta(1)} \to T \to 1 \ . \]
From \cite{Cox1995The-homogeneous}, any toric variety $X$ has an associated \emph{total coordinate ring}, or \emph{Cox ring}, which we denote $\Cox(X)$.  The ring $\Cox(X)$ is a $\Pic(X)$-graded polynomial ring
\[ \Cox(X) = \bbC[ x_\rho : \rho \in \Delta(1)] \]
and has an induced $K$-action via the grading.  (Note that Cox uses $G$ where we write $K$).

The variety $V = \Spec(\Cox(X))$ is isomorphic to affine space $\bbC^{\Delta(1)}$ and there is a closed subset $Z \subset V$ obtained from an ``irrelevant ideal.''  The open subset $V-Z$ is invariant under the $K$-action and, since $X$ is non-singular, the map $(V-Z) \to X$ is a $K$-torsor.  Indeed, this torsor is a \emph{universal torsor} over $X$.

We define $\tildeAut(X)$ as the normaliser of $K$ in the automorphism group of $V - Z$.  From \cite[Theorem 4.2]{Cox1995The-homogeneous}, there is an exact sequence
\[ 1 \to K \to \tildeAut(X) \to \Aut(X) \to 1 \]
where we denote the last map $\pi : \tildeAut(X) \to \Aut(X)$.

Let $\Aut(N,\Delta)$ denote the subgroup of $\GL(N)$ which preserves the fan $\Delta$ (permutes the cones).  The group $\Aut(N,\Delta)$ has an isomorphic lift to $\tildeAut(X)$ via permutations of the basis elements $\{x_\rho\}$.  The group $(\bbC^\times)^{\Delta(1)}$ is a subgroup of $\tildeAut(X)$ which descends to $T \subset \Aut(X)$.

More generally, if $G$ is a group with a faithful action on $X$ then there is a group
\[ E = \pi^{-1}(G) \subset \tildeAut(X) \]
acting faithfully on $V - Z$.  We have an exact sequence of groups
\begin{equation}\label{eqn:KEGseq}
1 \to K \to E \to G \to 1.
\end{equation}

For finite groups $G$, the group $E$ acts as a subgroup of $\GL(V)$:

\begin{lem}
Let $G$ be a finite group acting faithfully on $X$.  Then $E$ acts linearly on $V$.
\end{lem}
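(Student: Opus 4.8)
The plan is to exhibit $V$ as a representation of $E$ by combining the linear data already present on the two subgroups that generate the relevant part of $E$, namely the ``toral'' part $(\bbC^\times)^{\Delta(1)}$ and lifts of elements of $\Aut(N,\Delta)$, and then to argue that a finite subgroup of the resulting (infinite) group still acts linearly. Concretely, $\tildeAut(X)$ contains $(\bbC^\times)^{\Delta(1)}$ acting diagonally on $V = \bbC^{\Delta(1)}$ (hence linearly), and $\Aut(N,\Delta)$ lifts to the subgroup of $\GL(V)$ permuting the coordinate basis $\{x_\rho\}$ (hence linearly). The key structural input I would invoke is the description of $\Aut(X)$ for complete non-singular toric $X$: by \cite[Theorem 4.2 and Corollary 4.7]{Cox1995The-homogeneous}, $\Aut(X)$ is generated by $T$, by $\Aut(N,\Delta)$, and by the ``root'' unipotent one-parameter subgroups coming from the Demazure roots of $\Delta$; each Demazure root $\alpha$ gives a one-parameter subgroup of $\tildeAut(X)$ whose action on the polynomial ring sends a single variable $x_\rho$ to $x_\rho + \lambda\, (\text{monomial in the other variables})$. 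The monomial in question has the same degree as $x_\rho$, so on $V = \Spec \Cox(X)$ this is again a \emph{linear} substitution. Thus every one of Cox's generators of $\tildeAut(X)$ acts linearly on $V$, so the identity component $\tildeAut(X)^\circ$, and in fact all of $\tildeAut(X)$, acts on $V$ by linear transformations — i.e. $\tildeAut(X) \subset \GL(V)$.

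Granting that, the lemma is immediate: $E = \pi^{-1}(G) \subset \tildeAut(X) \subset \GL(V)$, so $E$ acts linearly on $V$. I would spell this out as: since $G$ is finite and $K \simeq \Hom(\Pic(X),\bbC^\times)$ is a torus (so in particular $E$ sits in the extension \eqref{eqn:KEGseq} with finite quotient and diagonalizable kernel), $E$ is an affine algebraic subgroup of $\tildeAut(X)$; and every element of $\tildeAut(X)$ has already been shown to act linearly on $V$, so the inclusion $E \hookrightarrow \GL(V)$ is what we want. (One does not even need the finiteness of $G$ for the conclusion as stated, but it is harmless to keep it, and it is what guarantees $E$ is an honest linear algebraic group rather than just a subgroup of $\GL(V)$.)

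The one point requiring care — and the step I expect to be the main obstacle — is verifying that the Demazure-root subgroups act by \emph{linear} (degree-preserving) substitutions on the coordinates $x_\rho$, i.e. that the defining exponents of a Demazure root force the correcting monomial $\prod_{\rho' \neq \rho} x_{\rho'}^{\langle m, v_{\rho'}\rangle}$ to lie in the same $\Pic(X)$-graded piece as $x_\rho$. This is a direct consequence of the definition of a root of the fan (the linear form $m$ pairs to $-1$ with the ray $v_\rho$ and to a nonnegative integer with every other ray, so the monomial has the same class as $x_\rho$ in $\Div_T(X)$ modulo $M$), so it is really a bookkeeping check rather than a genuine difficulty; but it is the only place where the specific structure of $\tildeAut(X)$ enters, so I would present it carefully with reference to \cite[\S4]{Cox1995The-homogeneous} and \cite{Fulton1993Introduction-to}. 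Everything else — that diagonal torus actions and coordinate permutations are linear, and that a subgroup of $\GL(V)$ acts linearly — is formal.
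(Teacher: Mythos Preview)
Your argument has a genuine gap at exactly the point you flagged as ``the main obstacle,'' but the issue is not a bookkeeping check: the claim itself is false. You assert that because the correcting monomial $\prod_{\rho'\neq\rho} x_{\rho'}^{\langle m,v_{\rho'}\rangle}$ lies in the same $\Pic(X)$-graded piece as $x_\rho$, the Demazure-root substitution $x_\rho \mapsto x_\rho + \lambda\,\prod_{\rho'\neq\rho} x_{\rho'}^{\langle m,v_{\rho'}\rangle}$ is a \emph{linear} automorphism of $V=\bbC^{\Delta(1)}$. But ``same $\Pic(X)$-degree'' is not ``degree~$1$ as a polynomial in the $x_\rho$.'' For instance, on the Hirzebruch surface $\bfF_n$ (rays $(1,0),(0,1),(-1,n),(0,-1)$) the root $m=(a,1)$ with $0\le a\le n$ gives the substitution $x_4 \mapsto x_4 + \lambda\, x_1^{a}x_2\,x_3^{\,n-a}$, a polynomial of total degree $n+1$ in the Cox variables. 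For $n\ge 1$ this is a graded automorphism of $\Cox(X)$ but \emph{not} an element of $\GL(V)$. Consequently $\tildeAut(X)\not\subset\GL(V)$ in general, and your conclusion that $E\subset\tildeAut(X)\subset\GL(V)$ collapses. Your parenthetical remark that ``one does not even need the finiteness of $G$'' is precisely the warning sign: the statement is false without that hypothesis.

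The paper's proof uses the finiteness of $G$ in an essential way. From Cox's structure theorem, $\tildeAut(X)=(R_u\rtimes G_s)\cdot\Aut(N,\Delta)$ with $R_u$ unipotent and $G_s=\prod_i \GL(S'_{\alpha_i})$ a product of general linear groups on the $K$-weight spaces of $V$. The Demazure-root subgroups live in $R_u$, which is exactly the non-linear part. Since $G$ is finite and $K$ is diagonalisable, every element of $E=\pi^{-1}(G)$ is semisimple; hence $E$ meets $R_u$ trivially and $E\subset G_s\cdot\Aut(N,\Delta)$. This last group visibly acts linearly on $V$ (block-linear on weight spaces, together with permutations of basis vectors), and that is where linearity of the $E$-action comes from. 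The missing idea in your proposal is this semisimplicity argument that lets you discard $R_u$.
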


\begin{proof}
From \cite[Section 4]{Cox1995The-homogeneous}, the linear algebraic group $\tildeAut(X)$ is of the form $(R_u \rtimes G_s) \cdot \Aut(N,\Delta)$ where $R_u$ is unipotent and $G_s$ is reductive.  (Note that Cox's notation $G_s$ has nothing to do with the group $G$ in our context).  Since $G$ is finite and $K$ consists of semisimple elements, all elements of $E$ are semisimple.  Thus $E \subset G_s \cdot \Aut(N,\Delta)$.  The group $G_s$ is of the form
\[ G_s = \prod \GL(S_{\alpha_i}')\]
where the $S_{\alpha_i}'$s are the weight-spaces of the action of $K$ on $V$ (as a vector space).  The group $\Aut(N,\Delta)$ permutes the basis vectors of $V$.  Thus $G_s \cdot \Aut(N,\Delta)$ acts linearly on $V$.  Thus the subgroup $E$ acts linearly.
\end{proof}

The versal property is related to Cox rings by the following result:

\begin{thm}\label{thm:VersalCoxSplit}
Suppose $G$ is a finite group and $X$ is a complete non-singular toric faithful $G$-variety.  Then $X$ is versal if and only if the exact sequence (\ref{eqn:KEGseq}) splits.
\end{thm}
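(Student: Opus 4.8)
The plan is to prove both directions by exploiting the fact, established in the preceding lemma, that $E$ acts linearly on $V = \Spec(\Cox(X))$, and that $V - Z$ is a universal torsor over $X$ carrying the $K$-torsor structure.

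For the ``if'' direction, suppose the sequence (\ref{eqn:KEGseq}) splits, so there is a section $s : G \to E$ realizing $G$ as a subgroup of $E$ acting linearly on $V$. Then $V$ is a faithful linear $G$-variety (the action is faithful because $G$ injects into $E \subset \GL(V)$), hence versal. I would then produce a $G$-equivariant dominant rational map $V \dasharrow X$: the composite $V - Z \to X$ is $K$-equivariant and $G$-equivariant for the $s(G)$-action on the source and the given $G$-action on $X$, and since $Z$ has codimension at least $2$ (or at any rate $V-Z$ is a nonempty $G$-invariant open set), this is a dominant $G$-equivariant map of faithful $G$-varieties. Because $V$ is versal and compressions preserve versality (\cite[Corollary 6.14]{BerhuyFavi2003Essential-dimen}), $X$ is versal.

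For the ``only if'' direction, suppose $X$ is versal. Since $X$ is a complete (hence proper) non-singular toric variety and the $T$-fixed points of $X$ survive as $G$-orbits, the key input is Proposition \ref{prop:abelSubgroups}: every abelian — in particular, every cyclic — subgroup of $G$ has a fixed point on $X$. The strategy is to lift the section one prime at a time, or more precisely to use that $E$ acts linearly on $V$ and that a splitting of (\ref{eqn:KEGseq}) is equivalent to $E$ containing a complement to $K$. I would argue that a $G$-fixed point $x \in X$, lying in the torus-fixed locus, has a fiber in $V - Z$ that is a single $K$-orbit on which $E$ acts; the stabilizer in $E$ of a point of this fiber maps isomorphically to the stabilizer of $x$ in $G$, which is all of $G$. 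Concretely: pick $v$ in the fiber over the $G$-fixed point; then $\Stab_E(v) \to G$ is surjective (since every $g$ moves $v$ within its $K$-orbit, and we can correct by an element of $K$) and injective (since $K$ acts freely on $V - Z$), giving the desired splitting. The remaining subtlety is that a priori $G$ need not have a \emph{global} fixed point on $X$ even though every abelian subgroup does; here I would instead invoke versality directly — since $V$ is a faithful linear (hence versal) $G$-variety once we have \emph{any} linear $G$-action lifting, but to get that lift one uses versality of $X$ to obtain a $G$-equivariant rational map from a faithful linear representation $W$ of $G$ into $V - Z$, and then a fixed point / descent argument (``going down'', as in Proposition \ref{prop:abelSubgroups}) transports the smooth fixed point $0 \in W$ to a smooth point of $V - Z$ whose stabilizer splits the sequence.

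The main obstacle I anticipate is precisely this last point: converting ``$X$ is versal'' into an actual splitting of (\ref{eqn:KEGseq}) rather than merely into fixed-point data for abelian subgroups. The clean way is: versality of $X$ gives a $G$-equivariant rational map $W \dasharrow V - Z$ from a faithful linear representation; composing with a general translate one may assume $0 \in W$ maps to a well-defined point $v_0 \in V - Z$ fixed by $G$ (using that $0$ is a smooth $G$-fixed point and invoking \cite[Proposition A.2]{ReichsteinYoussin2000Essential-dimen} to ensure the image point is $G$-fixed); then $\Stab_E(v_0) \cong G$ because $K$ acts freely on $V - Z$, and this isomorphism is a section of $E \to G$. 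Verifying that the ``going down'' hypotheses are met — $W$ proper after compactifying equivariantly, the map extendable to the fixed point, the target point genuinely $G$-fixed and lying in the free locus $V - Z$ — is the technical heart of the argument; everything else is bookkeeping with the exact sequences $1 \to M \to \Div_T(X) \to \Pic(X) \to 1$ and $1 \to K \to (\bbC^\times)^{\Delta(1)} \to T \to 1$ already recalled above.
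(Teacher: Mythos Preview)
Your ``if'' direction is essentially the paper's argument: a splitting gives $G \hookrightarrow E \subset \GL(V)$, so $V$ is a faithful linear $G$-variety and $V - Z \to X$ is a $G$-equivariant compression; hence $X$ is versal.

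The ``only if'' direction, however, has a genuine gap. You write that versality of $X$ yields ``a $G$-equivariant rational map from a faithful linear representation $W$ of $G$ into $V - Z$.'' But there is no $G$-action on $V-Z$ until the sequence (\ref{eqn:KEGseq}) is split; only $E$ acts there. Versality of $X$ gives you $f : W \dasharrow X$, and lifting $f$ to $V-Z$ amounts to choosing a section of the pulled-back $K$-torsor over the domain $U \subset W$. Such a section exists (this is the step that needs $\Pic(U)=0$ and hence $H^1(U,K)=0$), but an arbitrary section is not $G$-equivariant, so you cannot simply transport the fixed point $0 \in W$ forward. Moreover, even granting a $G$-action on $V-Z$, the ``going down'' argument of \cite[Proposition~A.2]{ReichsteinYoussin2000Essential-dimen} requires the target to be proper, and $V-Z$ is quasi-affine; there is no reason the rational map should be defined at $0$.

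The paper's route avoids this circularity. It pulls back the $K$-torsor $V-Z \to X$ along $f$ to get a $K$-torsor $P \to U$ on which $E$ acts compatibly with the $G$-action on $U$. Because $U$ is open in affine space, $\Pic(U)=0$ forces $H^1(U,K)=0$, so $P \to U$ is trivial. Since the indeterminacy locus of $f$ has codimension $\ge 2$ (as $X$ is proper and $U$ normal), $\calO(U)^\times = \bbC^\times$ and the set of global sections of $P \to U$ is a principal homogeneous space for $K$ --- in fact a copy of $K$ itself. Now $E$ acts on this set of sections with $K$ acting by translation, and Lemma~\ref{lem:actionOnTorsorsSplit} immediately produces the splitting: the $E$-stabiliser of any single section maps isomorphically onto $G$. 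The key idea you are missing is to let $E$ act on the \emph{space of sections} of the pulled-back torsor rather than to look for a fixed point in $V-Z$.
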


\begin{proof}
Suppose the exact sequence splits.  The map from $V - Z$ to $X$ may be viewed as a dominant rational map $\psi : V \dasharrow X$.  Since $E$ is linear for any finite group $G$, we obtain an $E$-equivariant rational map from a linear $E$-variety to $X$.  Since there is a section $G \hookrightarrow E$ the map $\psi$ may be viewed as a $G$-equivariant dominant rational map from a linear $G$-variety.  Thus $X$ is versal.

For the other implication, we assume $X$ is versal and want to show (\ref{eqn:KEGseq}) splits.

Since $X$ is versal there exists a $G$-equivariant rational map $f:W \dasharrow X$ where $W$ is a faithful linear $G$-variety.  Let $P \to U$ be the $K$-torsor obtained by pulling back $\psi$ along the restriction of $f$ to its domain of definition $U$.  From the universal property of pullbacks we obtain an $E$-action on $P$ compatible with the $G$-action on $U$.

Note that $\Pic(U)=0$ since $U$ is open in the affine space $W$.  Thus, from the exact sequence \cite[(2.0.2)]{ColliotTheleneSansuc1987La_descente_sur}, we see that the \'etale cohomology group $H^1(U,K)$ is trivial.  In particular, the torsor $P \to U$ is trivial.

Since $X$ is proper and $U$ is normal, the indeterminacy locus of $f$ is of codimension $\ge 2$.  Thus, all invertible global functions on $U$ are constant and the space of sections of $P \to U$ is isomorphic to $K$.  Thus $E$ has an induced action on $K$ and the desired splitting follows from Lemma \ref{lem:actionOnTorsorsSplit} below.
\end{proof}

\begin{lem}\label{lem:actionOnTorsorsSplit}
Suppose $E$ is an algebraic group with closed normal subgroup $K$ and quotient $G = E/K$.  Suppose $E$ acts on $K$ such that the restricted action of $K$ on itself is translation.  Then $E$ splits as $K \rtimes G$.
\end{lem}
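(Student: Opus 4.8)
The plan is to exhibit an explicit complement to $K$ in $E$, namely a point stabiliser. Write $e_0 \in K$ for the identity element, now regarded as a point of the variety $K$ on which $E$ acts, and let $S = \Stab_E(e_0)$. As the stabiliser of a point, $S$ is a closed subgroup of $E$. Since the restriction of the action to $K \subset E$ is left translation, the only element of $K$ fixing $e_0$ is the identity, so $S \cap K = \{1\}$; in particular $\pi|_S$ has trivial kernel, where $\pi : E \to G$ is the quotient map.

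Next I would check that $\pi|_S : S \to G$ is surjective, hence a bijective homomorphism of algebraic groups. Given $g \in G$, choose $\tilde g \in E$ with $\pi(\tilde g) = g$ and set $k = \tilde g \cdot e_0 \in K$. Because $k^{-1} \in K$ acts by translation, $(k^{-1}\tilde g)\cdot e_0 = k^{-1}\cdot(\tilde g \cdot e_0) = k^{-1}\cdot k = e_0$, so $k^{-1}\tilde g \in S$ and $\pi(k^{-1}\tilde g) = g$. The same computation, applied to an arbitrary $e \in E$ with $k = e\cdot e_0$, shows $k^{-1}e \in S$, so $e = k\,(k^{-1}e) \in KS$; thus $E = KS$, while $K \cap S = \{1\}$ and $K \trianglelefteq E$.

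It remains to promote this set-theoretic splitting to one of algebraic groups. Working over $\mathbb{C}$ (characteristic $0$), the bijective homomorphism $\pi|_S : S \to G$ is automatically an isomorphism of algebraic groups, so its inverse is a homomorphic section $G \xrightarrow{\sim} S \hookrightarrow E$ of $\pi$; equivalently, the multiplication morphism $K \rtimes S \to E$ (with $S$ acting on $K$ by conjugation inside $E$) is a bijective homomorphism of algebraic groups, hence an isomorphism. Either way $E \simeq K \rtimes G$, as claimed.

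I expect the only genuine subtlety is this last step: knowing that the abstract splitting is a splitting in the category of algebraic groups. This is where characteristic $0$ is used, via the standard fact that a bijective homomorphism of algebraic groups in characteristic $0$ is an isomorphism (it is separable, hence \'etale by homogeneity, and a bijective \'etale morphism onto a smooth — in particular normal — target is an isomorphism). Everything else is a one-line orbit–stabiliser computation.
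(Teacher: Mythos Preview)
Your argument is correct and is essentially the same as the paper's: both take a point stabiliser $S = \Stab_E(p)$ as the complement to $K$, use that $K$ acts freely by translation to get $S \cap K = 1$, and use the transitivity of the $K$-action to see that $S$ surjects onto $G$. You add some care about upgrading the set-theoretic splitting to one of algebraic groups in characteristic $0$, which the paper leaves implicit.
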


\begin{proof}
Take any point $p \in K$ and consider the stabiliser $S = \Stab_E(p)$.  For any $g \in G$ we have a lift $h \in E$.  There is an element $k \in K$ such that $kh(p)=p$.  Thus $kh \in S$ and it follows that $S/(S \cap K) = G$.  Since $K$ acts freely on itself, $S \cap K = 1$.  Thus $S \simeq G$ and we have a splitting of $E$.
\end{proof}

\begin{rem}\label{rem:generalCoxRings}
Cox rings and universal torsors can be defined in more generality than the context of toric varieties (see, for example, \cite{ColliotTheleneSansuc1987La_descente_sur}, \cite{HuKeel2000Mori_dream_spac} and \cite{LafaceVelasco2009A-survey-on-Cox}).  The Cox rings of minimal rational surfaces have been extensively studied.  For example, conic bundles are considered in \cite[\S 2.6]{ColliotTheleneSansuc1987La_descente_sur}; del Pezzo surfaces, in \cite{Derenthal2007Universal_torso} and \cite{SerganovaSkorobogatov2007Del_Pezzo_surfa}.  It would be interesting to investigate versality using these constructions.

In fact, the proof of Theorem \ref{thm:VersalCoxSplit} still applies in one direction: if a $G$-action on a complete non-singular variety is versal then an analogous exact sequence to (\ref{eqn:KEGseq}) would still split.  However, the analog of $V$ is linear if and only if $X$ is toric \cite[Corollary 2.10]{HuKeel2000Mori_dream_spac}.  Thus, in general, one does not have an obvious compression from a linear $G$-variety as above.
\end{rem}

Recall that the standard projection $\bbC^n \dasharrow \bbP^{n-1}$ is an example of $\psi
$ obtained from the Cox ring.  We point out a special case of the preceding proposition:

\begin{cor}\label{cor:versalPn}
Let $G$ be a finite group acting faithfully on $X=\bbP^{n-1}$.  Then $X$ is $G$-versal if and only if there exists an embedding $G \hookrightarrow \GL_n(\bbC)$ such that the composition with the canonical map $\GL_n(\bbC) \to \PGL_n(\bbC)$ gives the $G$-action on $\bbP^{n-1}$.
\end{cor}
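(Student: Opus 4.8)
The plan is to deduce this corollary directly from Theorem \ref{thm:VersalCoxSplit} by unwinding what the exact sequence (\ref{eqn:KEGseq}) becomes in the special case $X = \bbP^{n-1}$. First I would recall the toric data of projective space: the fan $\Delta$ for $\bbP^{n-1}$ has $n$ rays $\rho_0,\dots,\rho_{n-1}$, so $\Div_T(X) \simeq \bbZ^n$ and $\Pic(X) \simeq \bbZ$, whence $\Cox(\bbP^{n-1}) = \bbC[x_0,\dots,x_{n-1}]$ with its usual grading, $V = \Spec \Cox(\bbP^{n-1}) \simeq \bbC^n$, the irrelevant locus $Z$ is the origin, and $K = \Hom(\Pic(X),\bbC^\times) = \bbC^\times$ acting on $V - Z$ by scalars. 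The universal torsor $\psi\colon V - Z = \bbC^n \setminus \{0\} \to \bbP^{n-1}$ is then just the standard projection, and $K = \bbC^\times$ sitting inside $\GL_n(\bbC)$ as the scalar matrices.

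Next I would identify $E = \pi^{-1}(G) \subset \tildeAut(\bbP^{n-1})$. Since $\tildeAut(\bbP^{n-1})$ is the normaliser of the scalars $\bbC^\times$ in $\Aut(\bbC^n \setminus \{0\}) \simeq \GL_n(\bbC)$ — which is all of $\GL_n(\bbC)$ — we get $\tildeAut(\bbP^{n-1}) = \GL_n(\bbC)$, $\Aut(\bbP^{n-1}) = \PGL_n(\bbC)$, and the exact sequence $1 \to K \to \tildeAut(X) \to \Aut(X) \to 1$ is precisely $1 \to \bbC^\times \to \GL_n(\bbC) \to \PGL_n(\bbC) \to 1$. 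Hence (\ref{eqn:KEGseq}) is the pullback
\[ 1 \to \bbC^\times \to E \to G \to 1 \]
where $E$ is the preimage of $G \subset \PGL_n(\bbC)$ in $\GL_n(\bbC)$. By Theorem \ref{thm:VersalCoxSplit}, $\bbP^{n-1}$ is $G$-versal if and only if this sequence splits, i.e. if and only if there is a section $s\colon G \to E \subset \GL_n(\bbC)$; composing $s$ with the inclusion $E \hookrightarrow \GL_n(\bbC)$ gives an embedding $G \hookrightarrow \GL_n(\bbC)$ whose image in $\PGL_n(\bbC)$ is exactly the given action, and conversely any such embedding provides the section. That is the statement to be proved.

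The one point requiring a small argument is that a splitting of $1 \to \bbC^\times \to E \to G \to 1$ gives an \emph{injective} section (an embedding of $G$), not merely a homomorphism $G \to E$ with the right composition — but this is automatic: since $E \to G$ is the quotient by $\bbC^\times$, any set-theoretic section which is a group homomorphism is injective because its composition with $E \to G$ is the identity. Likewise one should note that the lift furnished by Theorem \ref{thm:VersalCoxSplit} lands in $E$, which is a subgroup of $\tildeAut(\bbP^{n-1}) = \GL_n(\bbC)$ by the preceding lemma (or directly, since here $\tildeAut$ literally is $\GL_n$). I do not anticipate a genuine obstacle; the only care needed is the bookkeeping identifying $\tildeAut(\bbP^{n-1})$ with $\GL_n(\bbC)$ and the Cox-ring projection with the standard $\bbC^n \dashrightarrow \bbP^{n-1}$, which the paragraph preceding the corollary has already flagged. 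The whole corollary is thus a one-paragraph specialization of Theorem \ref{thm:VersalCoxSplit}.
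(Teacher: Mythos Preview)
Your proposal is correct and is exactly the intended argument: the paper presents this corollary without proof, merely flagging beforehand that the standard projection $\bbC^n \dasharrow \bbP^{n-1}$ is the Cox-ring map $\psi$, so the result is a direct specialization of Theorem~\ref{thm:VersalCoxSplit}. One small slip: $\Aut(\bbC^n\setminus\{0\})$ is not literally $\GL_n(\bbC)$, but the identification $\tildeAut(\bbP^{n-1})=\GL_n(\bbC)$ you need follows from Cox's description of $\tildeAut$ cited in the paper (or, for $n\ge 2$, from Hartogs plus $\bbC^\times$-equivariance), so the argument stands.
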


\begin{rem}\label{rem:Ledet}
Theorem \ref{thm:VersalCoxSplit} and Corollary \ref{cor:versalPn} were inspired by Ledet's classification of finite groups of essential dimension $1$ over an infinite ground field $k$ \cite{Ledet2007Finite-groups-o}.  Indeed, \cite[Theorem 1]{Ledet2007Finite-groups-o} states that a finite group $G$ has essential dimension $1$ if and only if there is an embedding $G \hookrightarrow \GL_2(k)$ such that the image of $G$ contains no scalar matrices $\ne 1$.  Such a group descends isomorphically to a subgroup of $\PGL_2(k)$.  In other words, the action of $G$ on $\bbP^1_k$ lifts to $\bbA^2_k$.
\end{rem}

The following is a useful tool for showing that a variety is versal.

\begin{cor}\label{cor:fixedPointImpliesVersal}
Suppose $G$ is a finite group acting faithfully on a complete non-singular toric variety $X$.  If $G$ has a fixed point then $X$ is $G$-versal.
\end{cor}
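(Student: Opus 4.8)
The plan is to reduce immediately to Theorem \ref{thm:VersalCoxSplit}: since $X$ is complete and non-singular, it suffices to show that the exact sequence (\ref{eqn:KEGseq}), $1 \to K \to E \to G \to 1$, splits. The splitting will be manufactured out of the fibre of the universal torsor sitting over the given fixed point, by a direct application of Lemma \ref{lem:actionOnTorsorsSplit}.

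In detail: let $x \in X$ be a $G$-fixed point and let $\rho : V - Z \to X$ denote the universal $K$-torsor. Since $\rho$ is a torsor it is surjective, so $F := \rho^{-1}(x)$ is a non-empty $K$-torsor over $\Spec\bbC$. As $\bbC$ is algebraically closed, $F$ has a $\bbC$-point, hence is a trivial torsor; fixing such a point $p \in F$ identifies $F$ with $K$ in such a way that the $K$-action on $F$ becomes translation. Now I would observe that the whole of $E$ preserves $F$: every element of $E$ maps under $\pi$ to an element of $G$, which fixes $x$, so it carries $\rho^{-1}(x)$ to itself (and $K \subset E$ preserves fibres of $\rho$ by definition of the torsor). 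Thus $E$ acts on $F \simeq K$ with the restricted $K$-action equal to translation.

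At this point Lemma \ref{lem:actionOnTorsorsSplit} applies verbatim to the $E$-action on $K \simeq F$: the stabiliser $S = \Stab_E(p)$ satisfies $S \cap K = 1$ (because $K$ acts freely on itself) and surjects onto $G = E/K$, so $S \simeq G$ is a section of (\ref{eqn:KEGseq}). Hence the sequence splits and $X$ is $G$-versal by Theorem \ref{thm:VersalCoxSplit}.

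I do not expect any serious obstacle here, since all the real work has been done in Theorem \ref{thm:VersalCoxSplit} and Lemma \ref{lem:actionOnTorsorsSplit}; the only point demanding a moment's care is the verification that the full group $E$ — not just some stabiliser subgroup — genuinely acts on the fibre $F$, which is exactly where the fixed-point hypothesis on $G$ is used.
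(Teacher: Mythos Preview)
Your proposal is correct and follows essentially the same approach as the paper: the paper's proof is the one-line observation that $E$ acts on the fibre over the fixed point, whence Lemma~\ref{lem:actionOnTorsorsSplit} gives the splitting and Theorem~\ref{thm:VersalCoxSplit} finishes. You have simply unpacked the details the paper leaves implicit (non-emptiness of the fibre, its identification with $K$, and the verification that all of $E$ preserves it).
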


\begin{proof}
We have an action of $E$ on the fibre of the fixed point, so the result follows from Lemma \ref{lem:actionOnTorsorsSplit}.
\end{proof}

\begin{rem}\label{rem:fixedPointNotAlwaysVersal}
We note that Corollary \ref{cor:fixedPointImpliesVersal} fails when $X$ is not toric.  Consider a hyperelliptic curve $C$ with its involution (generating a group $G \simeq C_2$).  This is a faithful $G$-variety with a fixed point.  However, $C$ cannot be versal since the image of a rational map from a linear variety to $C$ must be a point.
\end{rem}

The following corollary is also inspired by Ledet \cite{Ledet2007Finite-groups-o}:

\begin{cor}\label{cor:versalByPGroups}
Let $G$ be a finite group acting faithfully on a complete non-singular toric variety $X$.  Then $X$ is $G$-versal if and only if, for any prime $p$, $X$ is $G_p$-versal for a Sylow $p$-subgroup $G_p$ of $G$.
\end{cor}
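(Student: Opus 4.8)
The plan is to reduce everything to Theorem \ref{thm:VersalCoxSplit}, which says that $X$ is $G$-versal if and only if the extension $1 \to K \to E \to G \to 1$ of equation (\ref{eqn:KEGseq}) splits, where $E = \pi^{-1}(G) \subset \tildeAut(X)$ and $K = \Hom(\Pic(X),\bbC^\times)$. One direction is free: if $X$ is $G$-versal then by Proposition \ref{prop:versalSubgroups} it is $H$-versal for every subgroup $H$, in particular for each Sylow $p$-subgroup $G_p$. So the content is the converse: assuming $X$ is $G_p$-versal for every prime $p$, show $X$ is $G$-versal, i.e. that (\ref{eqn:KEGseq}) splits.

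First I would translate the hypothesis via Theorem \ref{thm:VersalCoxSplit}: for each prime $p$, writing $E_p = \pi^{-1}(G_p)$, the sequence $1 \to K \to E_p \to G_p \to 1$ splits. Now $K$ is a diagonalizable group, hence divisible as an abstract abelian group (it is a product of copies of $\bbC^\times$, since $\Pic(X)$ of a complete non-singular toric variety is free of finite rank), so it is an injective $\bbZ$-module. Therefore the group extension (\ref{eqn:KEGseq}) is governed by the cohomology group $H^2(G, K)$ (with $G$ acting on $K$ through conjugation in $E$), and splitting of the extension is equivalent to the vanishing of its class $\xi \in H^2(G,K)$. The key step is then the standard restriction–corestriction argument: for each prime $p$, restriction $H^2(G,K) \to H^2(G_p,K)$ sends $\xi$ to the class of the restricted extension $E_p$, which is zero by hypothesis; but the composite $\mathrm{cor}\circ\mathrm{res}\colon H^2(G,K)\to H^2(G,K)$ is multiplication by the index $[G:G_p]$, which is prime to $p$. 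Hence $\xi$ is killed by an integer prime to $p$ for every $p$, so $\xi$ is killed by $\gcd$ of these integers, which is $1$; thus $\xi = 0$ and (\ref{eqn:KEGseq}) splits. By Theorem \ref{thm:VersalCoxSplit}, $X$ is $G$-versal.

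The one point that needs a little care — and is the main obstacle — is justifying that the extension class lives in $H^2(G,K)$ and that splitting is detected by its vanishing, given that $K$ is only an abelian \emph{algebraic} group and the action of $G$ on $K$ is by algebraic automorphisms; one must make sure the restriction–corestriction formalism applies in this setting. This is handled by observing that $K \simeq (\bbC^\times)^r$ as an abstract abelian group with $G$ acting $\bbZ$-linearly (equivalently, via $\GL_r(\bbZ)$ together with a character-group action), so the extensions in question are classified as extensions of abstract groups with operators, and the usual cohomological machinery — including the transfer — is available verbatim. Since the splittings produced are automatically by algebraic subgroups (a finite subgroup of a linear algebraic group is closed), no further algebraicity check is needed. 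An alternative, if one prefers to avoid invoking $H^2$ directly, is to cite Lemma \ref{lem:actionOnTorsorsSplit} after first producing a fixed point of $E$ on a suitable torsor; but the cohomological route above is cleaner and mirrors Ledet's argument referenced in Remark \ref{rem:Ledet}.
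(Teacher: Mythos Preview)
Your proof is correct and follows essentially the same route as the paper: both reduce to Theorem \ref{thm:VersalCoxSplit} and then invoke the injectivity of the total restriction map $H^2(G,K) \to \prod_p H^2(G_p,K)$, which the paper cites from Brown and you prove directly via restriction--corestriction. The remark that $K$ is divisible/injective is harmless but unnecessary---extensions with abelian kernel are classified by $H^2(G,K)$ regardless---so you can drop that clause without loss.
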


\begin{proof}
Using Theorem \ref{thm:VersalCoxSplit} this follows from a well-known result in group cohomology.  Consider the product $\prod_p \Res^G_{G_p}$ of the restriction maps $\Res^G_{G_p} : H^2(G,K) \to H^2(G_p,K)$ over all primes $p$ and some choice of Sylow $p$-subgroups $G_p$ for each $p$.  From \cite[Section III.10]{Brown1982Cohomology-of-g}, this product is an injection.  Thus $E \to G$ has a section if and only if every $G_p$ has a section.
\end{proof}

We remark on one application of this corollary that is not immediately obvious, but extremely useful.  Suppose $X$ is a $G$-variety and we want to determine whether or not it is versal.  For each prime $p$, let $G_p$ be a $p$-Sylow subgroup of $G$.  Suppose $X$ is $G_p$-equivariantly birational to a $G_p$-variety $X_p$ for each prime $p$.  The versality property may be easier to determine on the new varieties $X_p$ than on the original variety $X$.

This corollary is our main tool in the proof of Theorem \ref{thm:4surfacesClassification}.  In particular, we will show that the versality question on all toric surfaces can be reduced to studying $3$-groups acting on $\bbP^2$ and $2$-groups acting on $\bbP^1 \times \bbP^1$.

There does not seem to be any compelling reason why Corollary \ref{cor:versalByPGroups} should only be true for toric varieties since versality is a birational invariant.  One might conjecture that this theorem holds for \emph{any} variety:

\begin{conj}\label{conj:versalByPGroups}
Let $G$ be a finite group acting faithfully on a variety $X$.  Then $X$ is $G$-versal if and only if, for any prime $p$, $X$ is $G_p$-versal for a Sylow $p$-subgroup of $G$.
\end{conj}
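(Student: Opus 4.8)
My plan is as follows. One implication is immediate from Proposition~\ref{prop:versalSubgroups}: if $X$ is $G$-versal then it is $H$-versal for every subgroup $H$, in particular for every Sylow subgroup $G_p$ --- and this uses nothing toric, in agreement with the conjecture being stated for arbitrary $X$. So the content is the converse. Assume $X$ is $G_p$-versal for every prime $p$, and let $X^{\circ} \subseteq X$ be the dense open locus on which $G$ acts freely, so that $X^{\circ} \to X^{\circ}/G =: S$ is a $G$-torsor $\tau$. By the standard correspondence between versal $G$-varieties and versal $G$-torsors \cite[\S 6]{BerhuyFavi2003Essential-dimen} --- and since every field extension of $\bbC$ is infinite --- it suffices to prove: for every field $L \supseteq \bbC$ and every $\alpha \in H^{1}(L,G)$ there is an $L$-point $s$ of $S$ with $s^{*}\tau \cong \alpha$. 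We are free to shrink $X$ first, as any $G$-invariant dense open of a $G_p$-versal variety is again $G_p$-versal.

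Next I would reduce to the Sylow subgroups by a restriction--corestriction argument on torsors. Fix $\alpha \in H^{1}(L,G)$, represented by a $G$-torsor $P \to \Spec L$. For each $p \mid |G|$ the quotient $P/G_p$ is finite \'etale over $L$ of degree $[G:G_p]$, which is prime to $p$; hence it has a closed point whose residue field $L_p$ satisfies $p \nmid [L_p:L]$, and the $G_p$-orbit over that point is a $G_p$-stable subscheme of $P_{L_p}$ that is a $G_p$-torsor, exhibiting $\alpha_{L_p}$ as the induced class $\Ind_{G_p}^{G}(\beta_p)$ for some $\beta_p \in H^{1}(L_p,G_p)$. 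Since $X^{\circ}$, being a $G$-invariant dense open of a $G_p$-versal variety, is $G_p$-versal, its generic $G_p$-torsor $X^{\circ} \to X^{\circ}/G_p$ is versal, so $\beta_p$ is induced by some $L_p$-point of $X^{\circ}/G_p$; pushing that point forward along $X^{\circ}/G_p \to S$ gives $s_p \in S(L_p)$ with $s_p^{*}\tau \cong \Ind_{G_p}^{G}(\beta_p) = \alpha_{L_p}$. Thus, for every $p \mid |G|$, the class $\alpha$ becomes realised by an $S$-point after a finite extension of degree prime to $p$.

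The remaining step --- producing from the points $s_p \in S(L_p)$ a single $s \in S(L)$ that induces $\alpha$ itself --- is where I expect the real difficulty to lie, and where the toric proof of Corollary~\ref{cor:versalByPGroups} fails to generalise. The data above give at best a zero-cycle of degree~$1$ on $S_L$, not a rational point, and the point must moreover induce the \emph{prescribed} class; there is no transfer of rational points in general. The toric case works precisely because Theorem~\ref{thm:VersalCoxSplit} trades the versality of $X$ --- a condition on $H^{1}(-,G)$, which admits no restriction map to $G_p$ --- for the splitting of a \emph{central} extension of $G$ by the \emph{abelian} group $K$, i.e.\ for the vanishing of a secondary additive class in $H^{2}(G,K)$, and there the product of restrictions $H^{2}(G,K) \to \prod_{p} H^{2}(G_p,K)$ is injective by a routine group-cohomology argument. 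By Remark~\ref{rem:generalCoxRings} no such linear (abelian) shadow exists once $X$ is not toric, so a proof in full generality seems to require a genuinely new ingredient: essentially a ``norm principle'' to the effect that a class realised by an $S$-point after extensions of degree prime to $p$ for all $p$ is realised over $L$. Such a principle is available under extra hypotheses on $X/G$ (e.g.\ retract rationality), which would settle those cases, but I have no argument in the generality stated.
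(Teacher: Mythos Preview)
You have correctly identified that one direction is immediate from Proposition~\ref{prop:versalSubgroups}, and that the converse is the substantive content. More importantly, you have also correctly identified that the converse cannot be completed with the tools at hand: the reduction to Sylow subgroups via restriction--corestriction produces only a zero-cycle of degree~$1$ on $S_L$ pulling back to $\alpha$, and there is no general mechanism for promoting this to an $L$-rational point inducing the prescribed class.

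What you may not have noticed is that the paper does not prove this statement either. It is stated as Conjecture~\ref{conj:versalByPGroups}, not as a theorem; the paper proves only the toric case (Corollary~\ref{cor:versalByPGroups}) and then remarks that ``there does not seem to be any compelling reason'' why the result should require the toric hypothesis. Your analysis of \emph{why} the toric argument fails to generalise --- namely, that Theorem~\ref{thm:VersalCoxSplit} converts versality into the vanishing of a class in $H^{2}(G,K)$ with $K$ abelian, where restriction to Sylow subgroups is genuinely injective, whereas for general $X$ no such abelian shadow is available (cf.\ Remark~\ref{rem:generalCoxRings}) --- is exactly right and matches the paper's own understanding of the situation. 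So there is no gap in your reasoning; rather, you have rediscovered that the statement is open in the generality asserted, and your proposal should be read as an honest explanation of the obstruction rather than as a failed proof.
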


\subsection{Monomial Actions}\label{sec:toricVersal:monomial}

We make the following observation:

\begin{lem}\label{lem:justTorus}
Suppose $X$ is a toric variety with a faithful action of a finite group $G$ contained in the torus $T$.  Then $X$ is $G$-versal.  Furthermore, if $X$ is complete, then $X$ has a $G$-fixed point.
\end{lem}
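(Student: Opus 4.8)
The plan is to use Theorem~\ref{thm:VersalCoxSplit}: it suffices to show that the extension (\ref{eqn:KEGseq}) splits when $G \subset T$. The key point is that the subgroup $(\bbC^\times)^{\Delta(1)} \subset \tildeAut(X)$ already maps onto $T \subset \Aut(X)$ with kernel exactly $K$, so the preimage $E = \pi^{-1}(G)$ of any $G \subset T$ lies inside the \emph{commutative} group $(\bbC^\times)^{\Delta(1)}$. First I would record this: $E$ is a finite extension of $K$ by $G$ sitting inside a torus, hence $E$ is itself abelian (indeed diagonalizable). An abelian extension of a finite abelian group $G$ by a torus $K$ always splits — concretely, $E$ is a diagonalizable group, its character lattice surjects onto that of $K$, and since $G = E/K$ is finite this surjection of finitely generated abelian groups splits after tensoring appropriately; dualizing gives a section $G \hookrightarrow E$. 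Alternatively one can argue that a finite subgroup of a torus maps isomorphically into the quotient torus after composing with a suitable splitting, or invoke that $H^2(G,K)$ vanishes for $K$ a torus (divisible, hence injective as a $\bbZ[G]$-module, or directly by the divisibility argument). Any of these finishes the versality claim.

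For the second assertion, suppose $X$ is complete. A complete non-singular toric variety always has a $T$-fixed point: any maximal cone $\sigma$ in the fan $\Delta$ is $n$-dimensional (completeness plus the support being all of $N\otimes\bbR$ forces full-dimensional maximal cones), and since $X$ is non-singular $\sigma$ is generated by a basis of $N$, so the associated affine chart $U_\sigma \simeq \bbC^n$ has a unique $T$-fixed point, the origin. This point is fixed by all of $T$, hence in particular by $G \subset T$. So $X$ has a $G$-fixed point.

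I expect the main obstacle to be stating the splitting argument cleanly rather than any genuine difficulty: one must be slightly careful that $E$, while contained in the torus $(\bbC^\times)^{\Delta(1)}$, need not be connected, so one should phrase the splitting in the language of diagonalizable groups / character lattices (or cite the vanishing of $H^2(G,K)$ with $K$ a torus), and one should note that $E$ acting linearly on $V$ is already guaranteed by the lemma preceding Theorem~\ref{thm:VersalCoxSplit}, so no extra work is needed there. (One could also bypass Theorem~\ref{thm:VersalCoxSplit} entirely for completeness and deduce versality directly from Corollary~\ref{cor:fixedPointImpliesVersal} once the fixed point is produced — but then the hypothesis ``$X$ complete'' would be needed for the whole statement, whereas the first sentence of the lemma does not assume completeness, so the Cox-ring splitting argument is the right route for that part.)
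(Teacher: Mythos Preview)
Your splitting argument is correct and clean when $X$ is complete and non-singular, but note that Theorem~\ref{thm:VersalCoxSplit} itself carries exactly those hypotheses. So the Cox-ring route for the first sentence of the lemma has the \emph{same} completeness requirement you flagged for Corollary~\ref{cor:fixedPointImpliesVersal}; your final parenthetical gets the dependencies backwards. As written, your argument does not establish versality for an arbitrary (possibly non-complete) toric $X$.

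The paper handles this by reversing the order of the two claims. It first observes, via the Borel fixed point theorem, that a complete toric variety has a $T$-fixed point, hence a $G$-fixed point. Then, for general $X$, it notes that $X$ is $T$-equivariantly---hence $G$-equivariantly, since $G \subset T$---birational to a complete non-singular toric variety (e.g.\ $\bbP^n$); Corollary~\ref{cor:fixedPointImpliesVersal} gives versality there, and versality is a birational invariant. Your divisibility/injectivity argument for the splitting of $1 \to K \to E \to G \to 1$ is a perfectly valid substitute for the appeal to Corollary~\ref{cor:fixedPointImpliesVersal} in the complete non-singular case, and is arguably more direct; but to cover arbitrary toric $X$ you still need the birational reduction step, which is the one idea missing from your proposal.
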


\begin{proof}
First, suppose $X$ is complete; by the Borel fixed point theorem $X$ has a $T$-fixed point and, thus, a $G$-fixed point.  In general, $X$ is $T$-equivariantly birationally equivalent to a complete non-singular toric variety (say $\bbP^n$).  Consequently, this birational equivalence is $G$-equivariant.  Thus $X$ is $G$-versal by Corollary \ref{cor:fixedPointImpliesVersal}.
\end{proof}

Consider a toric variety $X$ with torus $T$, fan $\Delta$ and lattice $N$.  Recall that $\Aut(N,\Delta)$ is the subgroup of $\GL(N)$ preserving the fan $\Delta$.  Note that the group $\Aut(N,\Delta)$ has a natural action on $X$ which is $T$-stable.  We say a group $G$ has a \emph{multiplicative action} on $X$ if $G \subset \Aut(N,\Delta)$.

\begin{lem}\label{lem:multAction}
Suppose $X$ is a toric variety with a faithful multiplicative action of a finite group $G$.  Then $X$ has a $G$-fixed point and $X$ is $G$-versal.
\end{lem}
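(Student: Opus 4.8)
The plan is to exhibit a canonical $G$-fixed point on $X$ and then reduce to Corollary~\ref{cor:fixedPointImpliesVersal} after passing to a complete non-singular model. For the fixed point: by construction the natural action of $\Aut(N,\Delta)$ on $X$ is $T$-stable, and on the open dense torus $T = N \otimes \bbC^{\times}$ it is exactly the action by group automorphisms induced from the inclusion $\Aut(N,\Delta) \subset \GL(N)$. Hence every element of $G \subset \Aut(N,\Delta)$ fixes the identity element $1 \in T \subset X$, so $X$ has a $G$-fixed point — and this holds regardless of whether $X$ is complete or non-singular.

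Since $X$ need not be complete or non-singular, Corollary~\ref{cor:fixedPointImpliesVersal} does not apply to $X$ directly, so next I would replace it by a better model. I would produce a complete non-singular toric variety $X'$ with the same torus $T$ on which $G$ still acts multiplicatively: starting from any complete fan in $N$ (say the fan of $\bbP^n$), take the coarsest common refinement $\Delta_1$ of its finitely many $G$-translates, which is a $G$-invariant complete fan, and then perform a $G$-equivariant toric resolution of singularities, star-subdividing whole $G$-orbits of cones at each step. This yields a complete non-singular fan $\Delta'$ in $N$ with $G \subset \Aut(N,\Delta')$; setting $X' = X(\Delta')$, the group $G$ acts faithfully and multiplicatively on $X'$ and fixes $1 \in T \subset X'$ by the previous paragraph, so Corollary~\ref{cor:fixedPointImpliesVersal} shows $X'$ is $G$-versal.

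It remains to transport versality back to $X$. Both $X$ and $X'$ contain $T$ as a dense open subset and the two $G$-actions restrict to the same action on $T$, so the birational map $X \dasharrow X'$ that is the identity on $T$ is $G$-equivariant (two rational maps of $G$-varieties agreeing on the dense set $T$ coincide). Thus $X$ and $X'$ are equivariantly birationally equivalent; since $X$ is faithful and the versal property is an equivariant birational invariant, $X$ is $G$-versal as well. The only step requiring genuine care is the construction of the $G$-invariant complete non-singular fan, i.e.\ the $G$-equivariant toric resolution; this is standard (a singularity measure strictly decreases under star subdivisions performed simultaneously along $G$-orbits, and the process terminates because $G$ is finite), while identifying the torus identity as a universal fixed point and carrying versality across the $T$-birational identity are formal.
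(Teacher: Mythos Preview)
Your proof is correct. The fixed-point argument is identical to the paper's: both observe that the identity element of $T$ is fixed by every element of $\Aut(N,\Delta)$, hence by $G$.

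For versality the two routes diverge. The paper simply cites outside sources (namely \cite[Lemma 3.3(d)]{ColliotTheleneKunyavskiiPopovReichstein2009Is-the-function} and \cite{Bannai2007Construction-of}) and moves on. You instead give a self-contained argument internal to the paper: build a $G$-invariant complete non-singular fan $\Delta'$ in the same lattice (common refinement of $G$-translates of a complete fan, followed by $G$-equivariant toric resolution), apply Corollary~\ref{cor:fixedPointImpliesVersal} to $X' = X(\Delta')$ via the fixed point $1 \in T$, and transport versality back to $X$ along the $G$-equivariant birational identity on $T$. This is exactly the strategy the paper itself uses one lemma earlier, in the proof of Lemma~\ref{lem:justTorus}, except that there the group lies in $T$ and one can take $X' = \bbP^n$ directly; you correctly note that for multiplicative actions one must first manufacture a $G$-stable fan. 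What your approach buys is independence from the external references, at the cost of invoking the (standard but nontrivial) existence of $G$-equivariant toric resolutions; the paper's citation avoids that digression but is less self-contained.
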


\begin{proof}
Any element of $\Aut(N,\Delta)$ fixes the identity of the torus $T$ in $X$.  The versality of $X$ is well-known (see \cite[Lemma 3.3(d)]{ColliotTheleneKunyavskiiPopovReichstein2009Is-the-function} or \cite{Bannai2007Construction-of}).
\end{proof}

Note that both $T$-actions and multiplicative actions are $T$-stable --- they preserve $T$ as a subvariety of $X$.  Any particular $T$-stable automorphism of $X$ is a product of an element of $T$ and an element of $\Aut(N,\Delta)$.  Thus, the group of $T$-stable automorphisms of $X$ is precisely
\[ \Aut_T(X) = T \rtimes \Aut(N,\Delta) \,. \]
Given such a subgroup of $\Aut_T(X)$ there is a natural map
\[\omega_T: G \to \Aut(N,\Delta) \subset \GL(N)\]
given by the projection $G \to G/(G\cap T)$.  We denote this map $\omega_T$ to emphasize its dependence on $T$ (even though, strictly speaking, it depends on $N$).

Despite the fact that $T$-actions and multiplicative actions are always versal, this does not hold for $T$-stable actions in general.  Nevertheless, they are much more manageable than general actions.

\begin{defn}
Let $G$ be a group acting faithfully on a toric variety $X$.  We say that the action is \emph{monomial} if there exists a fan $\Delta$ in a lattice $N$ inducing a torus $T = N \otimes \bbC^\times$ such that the associated toric variety is $G$-equivariantly biregular to $X$ and $g(T) = T$ for all $g \in G$.
\end{defn}

Such actions are also called ``twisted multiplicative'' in the literature.  

Note that, for a linear variety $X \simeq \bbC^n$, monomial actions are precisely the same as monomial representations.  Recall that all linear representations of supersolvable groups are monomial \cite[Section 8.5, Theorem 16]{Serre1977Linear-represen}.  This result has a natural generalisation for toric varieties.

\begin{prop}\label{prop:supersolvableToric}
Suppose $G$ is a supersolvable finite group acting on a complete non-singular toric variety $X$.  Then $G$ is monomial.
\end{prop}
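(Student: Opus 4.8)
The plan is to pass to the Cox ring picture of Section~\ref{sec:toricVersal} and then adapt the classical theorem that complex representations of supersolvable groups are monomial. Fix a fan $\Delta$ in a lattice $N$ with torus $T_0 = N\otimes\bbC^\times$ realising $X$, so that $V = \Spec(\Cox(X)) \simeq \bbC^{\Delta(1)}$ comes with its distinguished basis $\{x_\rho\}$, and let $E = \pi^{-1}(G) \subset \tildeAut(X)$, fitting into $1 \to K \to E \to G \to 1$. As established in Section~\ref{sec:toricVersal}, $E$ acts linearly on $V$; since $G$ is finite we have $E^0 = K$, a torus, so the unipotent radical of $E$ is trivial and $E$ is reductive. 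Call a linear $E$-representation \emph{monomial} if it is a direct sum of lines permuted by $E$.

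I first reduce the proposition to showing that the $E$-representation $V$ is monomial. Suppose $V = \bigoplus_i L_i$ with $E$ permuting the $L_i$. Since $K = E^0$ is connected it fixes each $L_i$ and acts on it through a character, so the multiset of $K$-weights occurring in $\{x_\rho\}$ equals that occurring in $\{L_i\}$; hence some $\phi \in G_s = \prod_\alpha \GL(S'_\alpha)$ sends the basis $\{x_\rho\}$ to a basis whose lines are the $L_i$. Then $\phi^{-1}E\phi$ acts by monomial matrices on $V$, and $\phi^{-1}E\phi \subset \tildeAut(X)$ because $\phi \in G_s \subset \tildeAut(X)$. Every monomial matrix lying in $\tildeAut(X)$ descends under $\pi$ to an automorphism of $X$ preserving the open torus orbit (it permutes the coordinate hyperplanes, whose images cover the complement of $T_0$), so $\pi(\phi)^{-1}\,G\,\pi(\phi) = \pi(\phi^{-1}E\phi) \subset \Aut_{T_0}(X)$; equivalently $G \subset \Aut_{T_1}(X)$ for the torus $T_1 = \pi(\phi)\,T_0\,\pi(\phi)^{-1}$. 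As $\pi(\phi)$ intertwines the $T_0$- and $T_1$-actions, $(X,T_1)$ is again a complete non-singular toric variety, and the fact that $G$ preserves $T_1$ exhibits $G$ as monomial.

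It remains to prove that $V$ is a monomial $E$-representation. As $E$ is reductive, $V$ is a direct sum of irreducibles, and a direct sum of monomial representations is monomial, so it suffices to show every irreducible representation $W$ of $E$ is monomial; I induct on $|G| = [E:K]$. Clifford theory for the normal subgroup $K$ of finite index in $E$ gives $W|_K = \bigoplus_{\chi \in O}\chi^{\oplus e}$ for a single orbit $O$ of characters under $G = E/K$. If $|O|>1$, fix $\chi\in O$; its stabiliser $E_\chi$ is the preimage of $G_\chi = \Stab_G(\chi)\subsetneq G$, so $E_\chi^0 = K$ and $E_\chi/K = G_\chi$ is supersolvable of smaller order, and $W = \Ind_{E_\chi}^E W'$ with $W'$ irreducible over $E_\chi$; by induction $W'$ is monomial, hence so is $W$. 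If $|O| = 1$, then $K$ acts on $W$ by a single character $\chi$, so $\ker\chi \trianglelefteq E$ acts trivially and $W$ factors through $\bar E := E/\ker\chi$, a central extension of $G$ by $K/\ker\chi$, which is trivial or isomorphic to $\bbC^\times$. Since $H^2(G,\bbC^\times)$ is finite, $\bar E$ contains a finite subgroup $\hat G$ with $\hat G\cdot(K/\ker\chi) = \bar E$ that is a central extension of $G$ by a finite cyclic group, hence supersolvable; by \cite[Section 8.5, Theorem 16]{Serre1977Linear-represen} the restriction $W|_{\hat G}$ is monomial, and since $K/\ker\chi$ acts by scalars it preserves each of the resulting lines, so they are permuted by $\hat G\cdot(K/\ker\chi) = \bar E$, hence by $E$. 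This closes the induction.

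The induction above is a routine transcription of Serre's theorem, so I expect the real work to be in the translation step: pinning down the monomial matrices inside $\tildeAut(X)$, checking that the change of basis $\phi$ can be taken inside $G_s$ (so that the $K$-grading, and hence the toric structure, is respected), and verifying that conjugating $T_0$ by $\pi(\phi)$ really yields a complete non-singular toric variety $G$-equivariantly biregular to $X$ with $G$ preserving its torus. One also needs to note that ordinary Clifford theory and the reduction to the finite central extension $\hat G$ go through with $K$ an algebraic torus in place of a finite group, but only the finiteness of $E/K$ is used there.
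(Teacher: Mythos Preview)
Your proof is correct and follows essentially the same strategy as the paper: lift $G$ to $E$ acting linearly on $V=\Spec(\Cox(X))$, prove that this representation is monomial with $K$ diagonal, observe that the resulting change of basis lies in $G_s\subset\tildeAut(X)$, and descend. The only notable difference is in the representation-theoretic step: the paper's Lemma~\ref{lem:supersolvExtAreMonomial} gives a self-contained induction on $\dim V$ (finding a non-central normal diagonalisable subgroup $A\supset K$ and inducing from the stabiliser of a weight space), whereas you induct on $|G|$ via Clifford theory for $K\lhd E$ and, in the isotypical case, pass to a finite central extension $\hat G$ of $G$ using finiteness of $H^2(G,\bbC^\times)$ so as to invoke Serre's theorem directly---a perfectly valid variant.
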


\begin{proof}
By Lemma \ref{lem:supersolvExtAreMonomial} below, there exists a change of basis $\alpha : V \to V$ such that $E = \pi^{-1}(G)$ has a monomial action on $\alpha(V)$ with $K$ acting diagonally.  Since $K$ acts diagonally in both coordinates, if $V_\lambda$ is the weight space in $V$ corresponding to some character $\lambda :K \to \bbC^\times$ then $\alpha(V_\lambda) \subset V_\lambda$.  These $V_\lambda$ are precisely the $S_{\alpha_i}'$ of \cite[Section 4]{Cox1995The-homogeneous}.  This means that $\alpha \in G_s$ where
\[ G_s = \prod \GL(S_{\alpha_i}') \subset \tildeAut(X).\]
Thus $\alpha$ descends to an automorphism of $X$.  In the new basis, we have an embedding $E \hookrightarrow (\bbC^\times)^{\Delta(1)} \rtimes \Aut(N,\Delta)$.  Taking the quotient by $K$ we obtain $G \subset T \rtimes \Aut(N,\Delta)$.
\end{proof}

\begin{lem}\label{lem:supersolvExtAreMonomial}
Suppose we have an exact sequence of algebraic groups (over $\bbC$)
\[ 1 \to K \to E \to G \to 1 \]
where $K$ is diagonalisable and $G$ is finite supersolvable.  For any representation $V$ of $E$ there exists a choice of coordinates such that $E$ is monomial with $K$ diagonal.  Furthermore, any irreducible representation has dimension dividing the order of $G$.
\end{lem}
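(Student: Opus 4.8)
The plan is to induct on the order of $G$, using the fact that a non-trivial supersolvable group has a normal subgroup of prime index. Let $V$ be a representation of $E$. We may as well assume $V$ is irreducible, since the general case follows by decomposing $V$ into irreducibles and taking a block-diagonal change of coordinates on each summand (the resulting monomial matrices, with $K$ diagonal on each block, assemble into a global monomial form). If $G$ is trivial then $E = K$ is diagonalisable and we are done, with $\dim V = 1$. So suppose $G$ is non-trivial and pick a normal subgroup $G' \trianglelefteq G$ of prime index $p$; let $E' \subset E$ be its preimage, so $1 \to K \to E' \to G' \to 1$ is exact, $K$ is still diagonalisable, $G'$ is supersolvable, and $[E:E'] = p$.

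The main step is a Clifford-theory argument for the algebraic-group extension. Restrict $V$ to $E'$. By the inductive hypothesis applied to each $E'$-irreducible constituent, there is a basis in which $E'$ acts monomially with $K$ diagonal; equivalently, $V|_{E'}$ is a direct sum of one-dimensional $K$-weight spaces that are permuted by $E'$. Now $E$ permutes the isotypic components of $V|_{E'}$, and since $[E:E']=p$ this $E/E'$-orbit has size $1$ or $p$. If the stabiliser is all of $E$: then every $E'$-irreducible constituent $W$ of $V$ is $E$-stable, so (as in the classical Clifford correspondence, which works here because the relevant obstruction lies in $H^2$ of a cyclic group and because $V$ is irreducible over $E$) $V$ is obtained from an irreducible $E$-representation whose restriction to $E'$ is a multiple of $W$; comparing dimensions and using irreducibility of $V$ one finds $V|_{E'} = W$ is already irreducible, and we extend the monomial basis for $E'$ on $W$ to all of $E$ by choosing an appropriate scalar multiple of a coset representative — this is where one checks that the single generator of $E/E' \simeq C_p$ can be normalised to act as a monomial matrix. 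If the orbit has size $p$: then $V = \Ind_{E'}^{E} W$ for an irreducible $E'$-representation $W$, and a monomial basis for $W$ over $E'$ induces a monomial basis for $V$ over $E$ in the standard way, with $K$ still diagonal because $K \subset E'$ acts diagonally on $W$ and on each of its translates. In either case $K$ remains diagonal throughout because $K$ is central-by-diagonalisable and the changes of basis we perform respect the $K$-weight decomposition.

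For the dimension statement: in the induced case $\dim V = p \cdot \dim W$ and $\dim W \mid |G'|$ by induction, so $\dim V \mid p|G'| = |G|$; in the extended case $\dim V = \dim W \mid |G'| \mid |G|$. The base case $\dim V = 1 \mid 1$ is immediate. I expect the main obstacle to be the bookkeeping in the "orbit of size $1$" case: one must verify that the extension of a monomial basis from $E'$ to $E$ can genuinely be taken monomial (not merely block-monomial), which amounts to checking that the cyclic quotient $C_p$ introduces no obstruction — this is exactly where supersolvability (ensuring the quotient is cyclic of prime order at each stage) is used, and it parallels the proof of \cite[Section 8.5, Theorem 16]{Serre1977Linear-represen} transported to the setting of an algebraic group extension with diagonalisable kernel.
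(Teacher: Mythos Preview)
Your induction on $|G|$ via a normal subgroup of prime index is a different route from the paper's, and the ``orbit of size $1$'' case is a genuine gap, not merely bookkeeping. You correctly deduce (using $H^2(C_p,\bbC^\times)=0$) that in this case $V|_{E'}$ is irreducible, and by induction it has a monomial basis $e_1,\dots,e_n$ with $K$ diagonal. But the assertion that a coset representative $g\in E\setminus E'$ ``can be normalised to act as a monomial matrix'' in this basis is unjustified and in fact can fail. The element $g$ only normalises the image of $E'$ in $\GL(V)$; it need not permute the lines $\bbC e_i$, because a monomial structure for $E'$ on $V$ is not in general unique, and $g$ may carry one monomial structure to a different one. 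A concrete instance: take $K=\langle c\rangle$ central of order $p$ inside the extraspecial group $E_0$ of order $p^3$ (exponent $p$, $p\equiv 1\pmod 4$), let $V$ be its $p$-dimensional irreducible, and let $F$ be the discrete Fourier matrix on $V$; then $F$ normalises $E_0$, the group $E=\langle E_0,F\rangle$ has $G=E/K$ supersolvable, the unique prime-index normal subgroup is $E'=\langle E_0,F^2\rangle$, and $V|_{E'}$ is irreducible. Both the standard basis and the Fourier basis are monomial for $E'$, yet $F$ is monomial in neither. So the induction, as written, does not close. (Scalars cannot help: multiplying $g$ by a scalar does not affect whether it permutes the lines.)

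It is also worth noting that your approach does \emph{not} parallel Serre's proof of \cite[\S 8.5, Theorem 16]{Serre1977Linear-represen}; the paper's proof does. Serre---and the paper---induct on $\dim V$, reduce to $V$ faithful irreducible, and in the non-abelian case locate a normal \emph{abelian} (here: diagonalisable) subgroup $A\supseteq K$ that is \emph{not} contained in the centre of $E$. The key payoff is that the $A$-weight decomposition of $V$ then has more than one summand, so one is \emph{always} in the induced case and the problematic extended branch simply never occurs. This is precisely the difficulty you flagged, and choosing $A$ rather than a prime-index $E'$ is how it is avoided. Your dimension argument and your treatment of the induced branch are fine.
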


\begin{proof}
This is a straight-forward generalisation of \cite[Section 8.5, Theorem 16]{Serre1977Linear-represen}.  We proceed by induction on the dimension of $V$.  For any normal subgroup $N \lhd E$ the quotient $\eta : E \to E/N$ sits in an exact sequence
\[ 1 \to \eta(K) \to \eta(E) \to \eta(E)/\eta(K) \to 1 \]
with $\eta(K)$ diagonalisable and $\eta(E)/\eta(K)$ finite supersolvable.  Thus it suffices to assume $V$ is a faithful irreducible representation of $E$.

Suppose $E$ is abelian.  There are no non-trivial unipotent elements in $G$ or $K$, so $E$ consists of semisimple elements.  Thus $E$ is diagonalisable (thus monomial).  This also takes care of the base case $\dim(V)=1$.

Suppose $E$ is non-abelian.  We claim there exists a normal diagonalisable subgroup $A$ containing $K$ which is not contained in the centre of $E$.  If $K$ is not central we may take $A = K$.  If $K$ is central then there exists a normal cyclic subgroup $C$ of $E/Z(E)$ by supersolvability of $E/K$.  In this case, take $A$ to be the inverse image of $C$ in $E$.  We see that $A$ is abelian (thus diagonalisable since $K$ is diagonalisable), contains $K$, and is not contained in the centre of $E$.

We have a decomposition $V=\oplus V_i$ into distinct weight spaces for the action of $A$.  Since $A$ is normal in $E$, the group $E$ permutes the spaces $V_i$.  In fact, the action of $E$ is transitive since $V$ is irreducible.  Since $E$ acts faithfully on $V$ and $A$ is not central in $E$, there is more than one weight space $V_i$.  Let $H$ be the maximal subgroup of $E$ such that $H(V_0)=V_0$.  We see that the $E$-representation $V$ is induced from the $H$-representation $V_0$.

Since $\dim(V) = [E:H] \dim(W)$ and $H$ contains $K$ the result follows from the induction hypothesis.
\end{proof}

Recall that $p$-groups are supersolvable.  Thus, in particular, actions of $p$-groups on toric varieties are always monomial.  This is particularly useful in light of Corollary \ref{cor:versalByPGroups} above.

\section{Del Pezzo Surfaces of Degree $\ge 5$}\label{sec:4surfaces}

The main goal of this section is to prove Theorem \ref{thm:4surfacesClassification}: a classification of all finite groups which act versally on one of the four surfaces $\bbP^2$, $\bbP^1 \times \bbP^1$, $DP_6$ (the del Pezzo surface of degree $6$) or $DP_5$ (the del Pezzo surface of degree $5$).

Recall that the automorphism group of $\bbP^2$ is $\PGL_3(\bbC)$; that of $\bbP^1 \times \bbP^1$ is
\[(\PGL_2(\bbC) \times \PGL_2(\bbC) ) \rtimes S_2 \]
where $S_2$ swaps the two copies of $\PGL_2(\bbC)$; that of $DP_6$ is $(\bbC^\times)^2 \rtimes D_{12}$ (see \cite[Section 6.2]{DolgachevIskovskikh2006Finite-subgroup}); and that of $DP_5$ is $S_5$ (see \cite[Section 6.3]{DolgachevIskovskikh2006Finite-subgroup}).

The surfaces $\bbP^2$, $\bbP^1 \times \bbP^1$ and $DP_6$ are toric.  The monomial actions on these surfaces will be particularly important.  For example, we have the following lemma:

\begin{lem}\label{lem:P1P1versalMonomial}
All versal actions of finite groups on $\bbP^1 \times \bbP^1$ are monomial.
\end{lem}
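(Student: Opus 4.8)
The plan is to reduce the statement to a purely group-theoretic fact about finite subgroups of $\PGL_2(\bbC)\times\PGL_2(\bbC)$, exploiting that a proper versal variety has fixed points for all abelian subgroups (Proposition~\ref{prop:abelSubgroups}). Write $X=\bbP^1\times\bbP^1$, so that $\Aut(X)=(\PGL_2(\bbC)\times\PGL_2(\bbC))\rtimes S_2$ with $S_2$ the factor swap. Given a versal $G$-action, set $G^+=G\cap(\PGL_2(\bbC)\times\PGL_2(\bbC))$, a subgroup of index at most $2$, with projections $p_1,p_2\colon G^+\to\PGL_2(\bbC)$. First I would prove that $p_1(G^+)$ and $p_2(G^+)$ are each cyclic or dihedral; then I would assemble a $G$-invariant torus and conclude that the action is conjugate into $\Aut_T(X)=T\rtimes\Aut(N,\Delta)$ for the standard fan of $\bbP^1\times\bbP^1$, which is precisely the meaning of \emph{monomial}.

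For the first step, note that since $X$ is proper and $G$-versal, Proposition~\ref{prop:abelSubgroups} gives every abelian subgroup $A\le G^+$ a fixed point on $X$; as $A$ acts coordinatewise, $p_1(A)$ and $p_2(A)$ each fix a point of $\bbP^1$, and a finite subgroup of $\PGL_2(\bbC)$ fixing a point of $\bbP^1$ is cyclic (its action on the tangent line at that point is faithful). So it suffices to show: if $\Gamma$ is a finite subgroup of $\PGL_2(\bbC)\times\PGL_2(\bbC)$ all of whose abelian subgroups have cyclic image under $p_1$, then $p_1(\Gamma)$ is cyclic or dihedral. Suppose not; by Klein's classification $p_1(\Gamma)$ then contains a copy $B\cong A_4$, and after replacing $\Gamma$ by $p_1^{-1}(B)$ we have $1\to N\to\Gamma\to A_4\to 1$ with $N=\ker p_1$ embedding into $\PGL_2(\bbC)$ via $p_2$, hence polyhedral. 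Let $M=\ker p_2$; since $M\cap N=1$, the map $p_1$ is injective on $M$ and identifies it with the normal subgroup $MN/N$ of $A_4$, so $M\cong MN/N\in\{1,\,V,\,A_4\}$ where $V$ is the Klein four of $A_4$.

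I would then treat these three cases, in each producing either an abelian subgroup with non-cyclic $p_1$-image (contradicting the hypothesis) or a structural contradiction. If $MN/N=V$, then $M\cong C_2\times C_2$ is itself such a subgroup. If $MN/N=A_4$, then $M$ and $N$ are normal with $M\cap N=1$, so $\Gamma=M\times N$ with $M\cong A_4$, and the Klein four of $M$ is such a subgroup. If $MN/N=1$, then $M=1$, so $p_2$ is injective and $\Gamma$ is polyhedral with $\Gamma/N\cong A_4$; if $N=1$ then $\Gamma\cong A_4$ contains its Klein four with non-cyclic $p_1$-image, while if $N\ne 1$ this contradicts Proposition~\ref{prop:polyNormalStructure}, since no polyhedral group has $A_4$ as a quotient by a nontrivial proper normal subgroup. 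This proves the first step for $p_1$, and symmetrically for $p_2$. I expect the main work to be in this case analysis — matching the possible extensions against the normal-subgroup structure of the polyhedral groups, the last case in particular; the rest is formal.

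For the second step: a cyclic or dihedral subgroup of $\PGL_2(\bbC)$ preserves the two-element set of fixed points of its cyclic part, hence is conjugate into the normaliser $H$ of a maximal torus; so after conjugating in $\PGL_2(\bbC)\times\PGL_2(\bbC)$ we may assume $G^+\subseteq H\times H$, which is exactly the subgroup of $\Aut_T(X)$ preserving the two rulings. If $G=G^+$ we are done. Otherwise choose $\sigma\in G\setminus G^+$ and write $\sigma=(\tau_1,\tau_2)s$ with $s$ the swap; conjugation by $\sigma$ interchanges $p_1$ and $p_2$, and a further conjugation by a suitable element $(\mathrm{id},\tau_1)$ turns $\sigma$ into $(\mathrm{id},\tau)s$ with $\sigma^2=(\tau,\tau)\in G^+\subseteq H\times H$; hence $\tau\in H$ and $\sigma\in\Aut_T(X)$. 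Therefore $G=\langle G^+,\sigma\rangle$ is conjugate into $\Aut_T(X)=(H\times H)\rtimes S_2$, i.e.\ $G$ preserves the torus of a toric model of $X$, so the $G$-action is monomial.
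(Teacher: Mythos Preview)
Your argument is correct, but it proceeds quite differently from the paper's. The paper exploits the Cox-ring splitting criterion (Theorem~\ref{thm:VersalCoxSplit}): versality of $X$ yields a section $G\hookrightarrow E\subset(\GL_2(\bbC)\times\GL_2(\bbC))\rtimes S_2$, and then Proposition~\ref{prop:PGL2toGL2} forces the projections $H_1,H_2$ of $H=G\cap(\PGL_2\times\PGL_2)$ to be cyclic or \emph{odd} dihedral; since every $2$-dimensional representation of such a group is monomial, the $4$-dimensional representation $\rho$ of $E$ (hence of $G$) is monomial, either as $\sigma_1\oplus\sigma_2$ or as $\Ind_H^G\sigma_1$. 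You instead bypass the Cox machinery entirely, using only the abelian-fixed-point criterion (Proposition~\ref{prop:abelSubgroups}) together with a case analysis against Klein's list and Proposition~\ref{prop:polyNormalStructure} to show $p_i(G^+)$ is cyclic or dihedral, and then you construct the invariant torus directly inside $\Aut(X)$. Your route is more elementary and self-contained; the paper's is shorter and yields the sharper conclusion that the $H_i$ are cyclic or odd dihedral, which feeds into later arguments (e.g.\ Lemma~\ref{lem:versal2groups}).

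One point you pass over deserves a sentence: after the first conjugation placing $G^+\subseteq H\times H$, the further conjugation by $(\id,\tau_1)$ could in principle move $G^+$ out of $H\times H$. It does not, because normality of $G^+$ in $G$ gives $\sigma(a,b)\sigma^{-1}=(\tau_1 b\tau_1^{-1},\tau_2 a\tau_2^{-1})\in G^+$ for all $(a,b)\in G^+$, whence $\tau_1\,p_2(G^+)\,\tau_1^{-1}=p_1(G^+)\subseteq H$; so the new $p_2$-image after conjugating by $(\id,\tau_1)$ is exactly $p_1(G^+)\subseteq H$, and $G^+$ remains in $H\times H$. With that check made explicit, your second step goes through as written.
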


\begin{proof}
Recall that $\pi : \tildeAut(X) \to \Aut(X)$ is the group homomorphism induced from the Cox ring construction.  For $G \subset \Aut(X)$ we have the lift $E = \pi^{-1}(G)$ in
\[\tildeAut(X) \simeq (\GL_2(\bbC) \times \GL_2(\bbC) ) \rtimes S_2 \]
with the exact sequence (\ref{eqn:KEGseq}) from section \ref{sec:toricVersal}.

Let $H = G \cap (\PGL_2(\bbC) \times \PGL_2(\bbC) )$.  The group $H$ is the image in $G$ of the centralizer of $K$ in $E$.  We see that $H$ is a normal subgroup of $G$ of index at most $2$.  Let $H_1$ and $H_2$ be the projections of $H$ to the first and second copies of $\PGL_2(\bbC)$.  We note that there is a natural embedding $H \subset H_1 \times H_2$.  When $H \ne G$ we have isomorphisms $H_1 \simeq H_2$ induced by the actions of elements in $G - H$.

We may consider the action of $E$ on $V \simeq \bbC^4$ as a $4$-dimensional representation $\rho$.  Let $E_H = \pi^{-1}(H)$.  Note that $E_H \subset \GL_2(\bbC) \times \GL_2(\bbC)$.   Thus, the restriction $\rho|_{E_H}$ is a direct sum of $2$-dimensional subrepresentions $\sigma_1$ and $\sigma_2$.  Informally, one may consider $\sigma_1$ as the preimage of $H_1$ and $\sigma_2$ as the preimage of $H_2$.  If $G \ne H$ then $\rho$ is induced from $\sigma_1$.

If $G$ is versal then there is a section from $G$ to $E$.  Recall that a finite subgroup of $\PGL_2(\bbC)$ lifts isomorphically to $\GL_2(\bbC)$ if and only if it is cyclic or odd dihedral (Proposition \ref{prop:PGL2toGL2}).  All $2$-dimensional representations of lifts of such groups are monomial.  Thus $\sigma_1$ and $\sigma_2$ are monomial.  If $G=H$ then $\rho=\sigma_1 \oplus \sigma_2$ is monomial.  If $G \ne H$ then $\rho = \Ind_H^G \sigma_1$ is monomial.
\end{proof}

\subsection{Monomial Actions on Toric Surfaces}

Recall the classification of conjugacy classes of finite subgroups of $\GL_2(\bbZ)$.  We use the list in Lorenz's book \cite[page 30]{Lorenz2005Multiplicative-}.  The list comes with explicit representatives for each conjugacy class in terms of explicit matrix generators.  We use the $\calG_i$ notation to denote this explicit representative in each conjugacy class.  (Lorenz uses $\calG_i$ to denote the class, not the representative).  Since it is used so extensively in what follows, we reproduce the list in Table \ref{tab:conjClassGLZ}.

\begin{table}[ht]
\begin{center}
\begin{tabular}{|c|c|c|}
\hline
Label & Generators & Structure\\
\hline\hline
\spaceHack $\calG_1$ & $\mat{1&-1\\1&0}, \mat{0&1\\1&0}$ & $D_{12}$ \\ \hline
\spaceHack $\calG_2$ & $\mat{-1&0\\0&1}, \mat{0&1\\1&0}$ & $D_8$ \\ \hline
\spaceHack $\calG_3$ & $\mat{0&-1\\1&-1}, \mat{0&-1\\-1&0}$ & $D_6$ \\ \hline
\spaceHack $\calG_4$ & $\mat{0&-1\\1&-1}, \mat{0&1\\1&0}$ & $D_6$ \\ \hline
\spaceHack $\calG_5$ & $\mat{-1&0\\0&1}, \mat{1&0\\0&-1}$ & $C_2 \times C_2$ \\ \hline
\spaceHack $\calG_6$ & $\mat{0&1\\1&0}, \mat{0&-1\\-1&0}$ & $C_2 \times C_2$ \\ \hline
\spaceHack $\calG_7$ & $\mat{1&-1\\1&0}$ & $C_6$ \\ \hline
\spaceHack $\calG_8$ & $\mat{0&-1\\1&0}$ & $C_4$ \\ \hline
\spaceHack $\calG_9$ & $\mat{0&-1\\1&-1}$ & $C_3$ \\ \hline
\spaceHack $\calG_{10}$ & $\mat{-1&0\\0&-1}$ & $C_2$ \\ \hline
\spaceHack $\calG_{11}$ & $\mat{-1&0\\0&1}$ & $C_2$ \\ \hline
\spaceHack $\calG_{12}$ & $\mat{0&1\\1&0}$ & $C_2$ \\ \hline
\end{tabular}
\caption{Conjugacy classes of non-trivial finite subgroups of $\GL_2(\bbZ)$}
\label{tab:conjClassGLZ}
\end{center}
\end{table}

One checks that Figure \ref{fig:latticeGLZ} contains the finite subgroup lattice structure in $\GL_2(\bbZ)$ where an arrow means ``contains a subgroup in the conjugacy class of''.  We omit composite arrows for clarity.

\begin{figure}[ht]
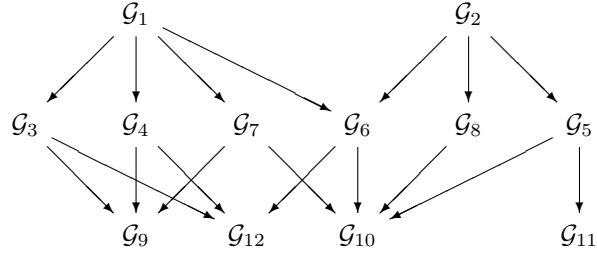

\[
\begin{diagram}
\node[2]{\calG_1} \arrow{sw} \arrow{s} \arrow{se} \arrow{see}
\node[3]{\calG_2} \arrow{sw} \arrow{s} \arrow{se}
\\
\node{\calG_3} \arrow{se} \arrow{see}
\node{\calG_4} \arrow{s} \arrow{se}
\node{\calG_7} \arrow{sw} \arrow{se}
\node{\calG_6} \arrow{sw} \arrow{s}
\node{\calG_8} \arrow{sw}
\node{\calG_5} \arrow{sww} \arrow{s}
\\
\node[2]{\calG_9}
\node{\calG_{12}}
\node{\calG_{10}}
\node[2]{\calG_{11}}
\end{diagram}
\]
\caption{Lattice of finite subgroups in $\GL_2(\bbZ)$}
\label{fig:latticeGLZ}
\end{figure}

From this subgroup lattice structure we make some useful observations about $p$-groups in $\GL_2(\bbZ)$.  For $p > 3$, there are no non-trivial $p$-subgroups of $\GL_2(\bbZ)$.  All $2$-subgroups of $\GL_2(\bbZ)$ are conjugate to a subgroup of $\calG_2$.  All non-trivial $3$-subgroups of $\GL_2(\bbZ)$ are conjugate to $\calG_9$.

Let $N = \bbZ^2$ be our lattice.  There are standard realisations of $\bbP^2$, $\bbP^1 \times \bbP^1$ and $DP_6$ as the toric varieties associated to the complete fans in $N$ from Figure \ref{fig:standardFans}.

\begin{figure}[ht]
\begin{center}
\input{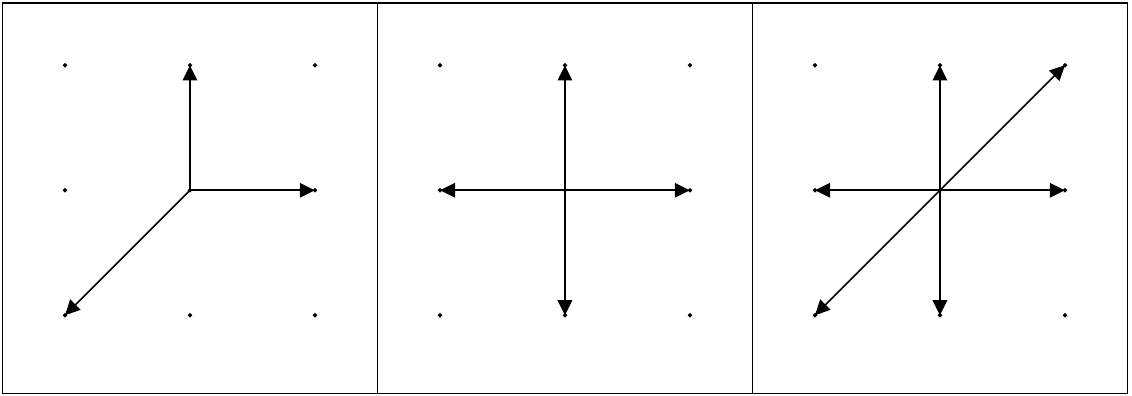_t}
\end{center}
\caption{Standard fans for $\bbP^2$, $\bbP^1 \times \bbP^1$ and $DP_6$.}
\label{fig:standardFans}
\end{figure}

Let $T = N \otimes \bbC^\times$ be the torus associated to the lattice $N$.  Choose coordinates such that $(\lambda_1, \lambda_2) \in (\bbC^\times)^2 \simeq T$ corresponds to
\[ (\lambda_1:\lambda_2:1) \in \bbP^2 \]
and
\[ (\lambda_1:1\ ;\ \lambda_2:1) \in \bbP^1 \times \bbP^1. \]
Thus the maximal cones in $\Delta_{\bbP^2}$ and $\Delta_{\bbP^1 \times \bbP^1}$ correspond to the $T$-fixed points indicated in the diagram.  

Recall that the group of $T$-stable automorphisms $\Aut_T(X)$ of a surface $X$ is $T \rtimes \Aut(N,\Delta_X)$ where $\Aut(N,\Delta_X)$ is the group of automorphisms of the associated fan $\Delta_X$.  We have the following automorphism groups:
\begin{center}
\begin{tabular}{l c r}
$\Aut(N,\Delta_{\bbP^2}) = \calG_4$ &
$\Aut(N,\Delta_{\bbP^1 \times \bbP^1}) = \calG_2$ &
$\Aut(N,\Delta_{DP_6}) = \calG_1$
\end{tabular}
\end{center}

Since $\calG_1$ and $\calG_2$ are the maximal finite subgroups of $\GL_2(\bbZ)$ up to conjugacy (see Figure \ref{fig:latticeGLZ}), all monomial group actions on toric surfaces are equivariantly birational to actions on $\bbP^1 \times \bbP^1$ or $DP_6$.  (Note, however, that this is \emph{not} the case for general actions.)

By Lemma \ref{lem:justTorus}, this means that all faithful $p$-group actions on toric surfaces are automatically versal for $p > 5$.  For $3$-groups and $2$-groups the theory is a bit more involved.  For $3$-groups, the versal property can be determined by considering actions on $\bbP^2$:

\begin{lem}[$3$-groups acting on toric surfaces]\label{lem:versal3groups}
Suppose $G_3$ is a finite $3$-group acting faithfully on a toric surface $X$.  Then $X$ is $G_3$-birationally equivalent to $Y = \bbP^2$ with $G_3 \subset T \rtimes \calG_9$ and the following are equivalent:
\begin{enumerate}
\item[1.] $Y$ has a $G_3$-fixed point,
\item[2.] $X$ is $G_3$-versal,
\item[3.] the following conditions hold:
\begin{enumerate}
\item[(a)] if $\omega_T(G_3) = 1$ then there are no conditions,
\item[(b)] if $\omega_T(G_3) = \calG_9$ then $G_3 \cap T = 1$.
\end{enumerate}
\end{enumerate}
\end{lem}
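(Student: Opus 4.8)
The plan is to first reduce everything to $\bbP^2$ and then analyze the split/non-split dichotomy of the extension $1 \to K \to E \to G_3 \to 1$ directly. Since $G_3$ is a $3$-group, it is supersolvable, so by Proposition \ref{prop:supersolvableToric} the action on $X$ is monomial; since all non-trivial $3$-subgroups of $\GL_2(\bbZ)$ are conjugate to $\calG_9$ and $\calG_9 \subset \calG_4 = \Aut(N,\Delta_{\bbP^2})$, the monomial model can be taken to be $\bbP^2$ with $G_3 \subset T \rtimes \calG_9 \subset T \rtimes \calG_4 = \Aut_T(\bbP^2)$. (If $\omega_T(G_3) = 1$ then $G_3 \subset T$ and everything is immediate from Lemma \ref{lem:justTorus}; so the interesting case is $\omega_T(G_3) = \calG_9$.) Because the versal property is a $G_3$-birational invariant, $X$ is $G_3$-versal iff $Y = \bbP^2$ is, so $1 \Leftrightarrow 2$ reduces to Corollary \ref{cor:fixedPointImpliesVersal} in one direction and Corollary \ref{cor:versalPn}/Theorem \ref{thm:VersalCoxSplit} in the other — indeed a $G_3$-fixed point on the toric variety $\bbP^2$ implies versality by Corollary \ref{cor:fixedPointImpliesVersal}, and conversely if $\bbP^2$ is versal then by Corollary \ref{cor:versalPn} $G_3$ lifts to $\GL_3(\bbC)$, and a linear action of a $3$-group on $\bbC^3$ always has an eigenvector (common eigenvector, since $3$-groups are supersolvable hence the representation is monomial, or just: finite $p$-groups acting linearly fix a line when... actually use that the $3$-dimensional representation decomposes into irreducibles of dimension a power of $3$, so either it has a $1$-dimensional summand — giving a fixed point — or it is irreducible of dimension $3$).

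\textbf{The key step: $2 \Leftrightarrow 3(b)$.} The heart of the matter is showing that, when $\omega_T(G_3) = \calG_9$, the surface $\bbP^2$ is $G_3$-versal if and only if $G_3 \cap T = 1$. For this I would work with the exact sequence $1 \to K \to E \to G_3 \to 1$ of \eqref{eqn:KEGseq}, where for $\bbP^2$ we have $K \simeq \bbC^\times$ and $E \subset \GL_3(\bbC)$ acting on $V = \bbC^3$ with $K$ the scalars. By Theorem \ref{thm:VersalCoxSplit}, versality is equivalent to this sequence splitting, i.e. to $G_3$ lifting isomorphically to $\GL_3(\bbC)$. One direction: if $G_3 \cap T = 1$, then $G_3 \simeq \omega_T(G_3) = \calG_9 \simeq C_3$, which is cyclic, hence lifts to $\GL_3(\bbC)$ (indeed to $\GL_2$, a fortiori to $\GL_3$) — or, more concretely, $C_3 \subset \calG_9 \subset \Aut(N,\Delta_{\bbP^2})$ lifts to $\tildeAut(\bbP^2)$ by permutation/monomial matrices, giving the section; alternatively the fixed point of the cyclic permutation action at the torus identity gives versality via Corollary \ref{cor:fixedPointImpliesVersal} or Lemma \ref{lem:multAction}. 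The other direction is the crux: suppose $G_3 \cap T \ne 1$; I must show $\bbP^2$ is \emph{not} $G_3$-versal, i.e. $E \to G_3$ does not split. Here I would use that a $3$-group with $\omega_T(G_3) = \calG_9$ and $G_3 \cap T \ne 1$ sits in a non-split situation because any lift to $\GL_3(\bbC)$ would have to contain an honest order-$3$ element mapping to the generator of $\calG_9$, but the monomial/cyclic-permutation lift of that generator has determinant and trace properties forcing the scalar ambiguity to be a genuine obstruction — more precisely, I expect one shows that the central extension class in $H^2(G_3, \bbC^\times)$ is non-trivial by restricting to a suitable abelian subgroup $A$ of $G_3$ containing part of $G_3 \cap T$, on which the extension becomes a non-split extension of abelian groups (so that $A$ itself fails to have a fixed point, contradicting Proposition \ref{prop:abelSubgroups}).

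\textbf{Where the obstacle lies.} The main obstacle is the ``only if'' part of $2 \Rightarrow 3(b)$: ruling out versality when $G_3 \cap T \ne 1$. The cleanest route is probably via Proposition \ref{prop:abelSubgroups}: if $\bbP^2$ were $G_3$-versal it would be versal for every abelian subgroup $A \subseteq G_3$, and every such $A$ would need a fixed point on $\bbP^2$; so I would exhibit an abelian subgroup $A$ with $A \cap T \ne 1$ and $\omega_T(A) = \calG_9$ (which exists inside $G_3$ since $\omega_T(G_3) = \calG_9$ and $G_3 \cap T \ne 1$ together with supersolvability force a subgroup of the form $C_3 \times C_3$ or a non-split $C_9$-type on which the torus part and the $\calG_9$-rotation both act nontrivially) and check by a direct eigenvector computation that a linear lift of $A$ to $\GL_3(\bbC)$ — equivalently, the action of $A$ on $\bbC^3$ generated by a diagonal matrix $t \in T$ of order $3$ and the cyclic coordinate permutation $\sigma$ — has no common eigenvector, hence $A$ has no fixed point on $\bbP^2$, so $A$ (and a fortiori $G_3$) is not versal. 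The routine but slightly delicate part is organizing the possible isomorphism types of $G_3$ (using the subgroup lattice of Figure \ref{fig:latticeGLZ} and supersolvability to pin down that $\omega_T(G_3) = \calG_9$ forces $G_3 / (G_3 \cap T) \simeq C_3$) and checking the eigenvector computation is genuinely obstructed in every sub-case; I do not expect this to require more than a short explicit matrix calculation once the reduction to the abelian case is in place.
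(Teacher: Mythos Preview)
Your plan is correct and would go through, but the paper shortens the key step $(2)\Rightarrow(3\mathrm{b})$ considerably. Where you propose to manufacture an abelian subgroup $A\subset G_3$ with $A\cap T\ne 1$ and $\omega_T(A)=\calG_9$ (which does work, but needs the auxiliary fact that a $3$-group acting on a nontrivial $3$-group has a nontrivial fixed element, in order to find a $\calG_9$-invariant $t\in G_3\cap T$), the paper simply notes that versality forces $\ed(G_3)\le 2$, so by Proposition~\ref{prop:3groupsAbelian} the whole group $G_3$ is already abelian, and one applies Proposition~\ref{prop:abelSubgroups} to $G_3$ itself. For the ``no fixed point'' check, the paper also avoids your proposed eigenvector computation: since $\calG_9$ fixes no cone of $\Delta_{\bbP^2}$ except $\{0\}$, any $G_3$-fixed point would lie in the open torus $T$, where a nontrivial $t\in G_3\cap T$ acts by translation and has no fixed point. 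This toric-orbit argument is coordinate-free and one line, whereas your matrix route is equivalent but longer. Finally, your separate direct argument for $(2)\Rightarrow(1)$ via Corollary~\ref{cor:versalPn} is unnecessary once the cycle $(1)\Rightarrow(2)\Rightarrow(3)\Rightarrow(1)$ is in place; and to complete it you would still have to rule out the ``irreducible of dimension $3$'' case you flag (Schur would force $Z(G_3)$ to act by scalars, contradicting faithfulness on $\bbP^2$), which the paper's route sidesteps entirely.
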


\begin{proof}
Recall that all $p$-groups are monomial and $\calG_9$ is the maximal finite $3$-subgroup of $\GL_2(\bbZ)$ up to conjugacy.  Thus we may assume $G_3 \subset T \rtimes \calG_9$ in some coordinates by Proposition \ref{prop:supersolvableToric}.  Furthermore, selecting a new fan in the same lattice induces a birational map.  Thus $X$ is $G_3$-birationally equivalent to $Y=\bbP^2$.  Since versality is a $G$-birational invariant, it suffices to assume $X=Y$ for the remainder of the proof.

The implication $(1) \implies (2)$ is immediate by Corollary \ref{cor:fixedPointImpliesVersal}.  In case $(a)$, all remaining implications are immediate by Lemma \ref{lem:justTorus}.  It remains to consider case $(b)$ with $\omega_T(G_3) = \calG_9$.

Assume $(2)$: that $X$ is versal.  Since $G_3$ is a $3$-group of essential dimension $2$, it is abelian by Proposition \ref{prop:3groupsAbelian}.  Note that $\calG_9$ does not fix any of the cones of the fan $\Delta_{\bbP^2}$ except for the trivial cone $\{0\}$.  Thus any $G_3$-fixed point must be on the torus.  Note the action of any non-trivial element of $G_3 \cap T$ does not fix any point on the torus.  If $G_3 \cap T \ne 1$ then we have an abelian subgroup without a fixed point.  This contradicts Proposition \ref{prop:abelSubgroups}.  So $(3)$ must hold and we have $(2) \implies (3)$.

If $(3)$ holds, then $G_3 \simeq C_3$.  Any finite cyclic group acting on $\bbP^2$ has a diagonalisation.  So there exists a $G_3$-fixed point.  We have $(3) \implies (1)$.
\end{proof}

Similarly, we determine which $2$-groups are versal by studying $\bbP^1 \times \bbP^1$.

\begin{lem}[$2$-groups acting on toric surfaces]\label{lem:versal2groups}
Suppose $G_2$ is a finite $2$-group acting faithfully on a toric surface $X$.  Then $X$ is $G_2$-birationally equivalent to $Y = \bbP^1 \times \bbP^1$ with $G_2 \subset T \rtimes \calG_2$ and the following are equivalent:
\begin{enumerate}
\item[1.] $Y$ has a $G_2$-fixed point,
\item[2.] $X$ is $G_2$-versal,
\item[3.] the following conditions hold:
\begin{enumerate}
\item[(a)] if $\omega_T(G_2)$ is conjugate to $1$ or $\calG_{12}$ then there are no conditions,
\item[(b)] if $\omega_T(G_2)$ is conjugate to $\calG_{11}$ then (after choosing coordinates such that $\omega_T(G_2) = \calG_{11}$), for any $t \in G_2 \cap T$, $t = (1,\lambda)$ for some $\lambda \in \bbC^\times$,
\item[(c)] in all remaining cases we require $G_2 \cap T = 1$.
\end{enumerate}
\end{enumerate}
\end{lem}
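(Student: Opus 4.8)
Here is how I would attack the statement, following the template established by Lemma~\ref{lem:versal3groups}.

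First I would reduce to $\bbP^1\times\bbP^1$ with a well-chosen toric structure. Since $2$-groups are monomial (Proposition~\ref{prop:supersolvableToric}) and $\calG_2$ is a maximal finite $2$-subgroup of $\GL_2(\bbZ)$ up to conjugacy, one may arrange $G_2\subset T\rtimes\calG_2$ on some toric surface; replacing the fan by $\Delta_{\bbP^1\times\bbP^1}$ on the same lattice is a $G_2$-birational map, and since versality is a birational invariant we reduce to $X=Y=\bbP^1\times\bbP^1$. Among all such realisations I would fix one maximising $|G_2\cap T|$, and then show that for this choice $\omega_T(G_2)$ is $\GL_2(\bbZ)$-conjugate to one of $1,\calG_{10},\calG_{11},\calG_{12}$ --- the four subgroups of $\calG_2\cong D_8$ of order $\le 2$. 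The mechanism is that a finite $2$-subgroup of $\PGL_2(\bbC)$ is cyclic or dihedral, and a dihedral subgroup $D_{2n}\subset\PGL_2(\bbC)$ normalises a one-dimensional torus meeting it in an index-$2$ subgroup; applying this in each $\bbP^1$-factor, and accounting for a possible interchange of the two factors, shows that if $|\omega_T(G_2)|>2$ one can pass to a conjugate torus and strictly enlarge $G_2\cap T$, contradicting maximality. Under this normalisation, cases (a), (b), (c) of condition~(3) correspond exactly to $\omega_T(G_2)$ conjugate to $\{1,\calG_{12}\}$, to $\calG_{11}$, and to $\calG_{10}$.

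With this in place, $(1)\Rightarrow(2)$ is immediate from Corollary~\ref{cor:fixedPointImpliesVersal}. For $(3)\Rightarrow(1)$ I would argue by cases. If $\omega_T(G_2)=1$ then $G_2\subset T$ and $Y$ has a $G_2$-fixed point by Lemma~\ref{lem:justTorus}. If $\omega_T(G_2)$ is conjugate to $\calG_{12}$, write $G_2=\langle G_2\cap T,\ t\sigma\rangle$ with $\sigma$ interchanging the two factors; then a direct check shows the point $((1\!:\!0),(1\!:\!0))$ is fixed by every element of $G_2\cap T\subset T$ and by $t\sigma$, hence by all of $G_2$. If $\omega_T(G_2)$ is conjugate to $\calG_{11}$, choose coordinates so $\omega_T(G_2)=\calG_{11}$ and write $G_2=\langle G_2\cap T,\ t\tau\rangle$ with $\tau$ inverting the first coordinate; by condition~(b) every element of $G_2\cap T$ has the form $(1,\lambda)$, so it fixes every point $(\ast;\,(1\!:\!0))$, and one checks that $\bigl((\sqrt{t_1}\!:\!1),(1\!:\!0)\bigr)$ is then fixed by $t\tau$ as well.

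For $(2)\Rightarrow(3)$ I would prove the contrapositive via Proposition~\ref{prop:abelSubgroups}: on the proper surface $\bbP^1\times\bbP^1$ a versal action has all of its abelian subgroups with fixed points, so it suffices to produce an abelian subgroup of $G_2$ with empty fixed locus whenever (3) fails. If $\omega_T(G_2)$ is conjugate to $\calG_{10}$ and $G_2\cap T\ne 1$, take $a\in G_2\cap T$ of order $2$ and a lift $g=t\iota$ of the generator of $\calG_{10}$ (automatically of order $2$); since conjugation by $\iota$ inverts $T$ and $a^2=1$, $\langle a,g\rangle\cong C_2\times C_2$, and the fixed locus of $a$ lies over $\{(1\!:\!0),(0\!:\!1)\}$ in at least one factor, in which $g$ interchanges those two points, so $\langle a,g\rangle$ has no fixed point. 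If $\omega_T(G_2)$ is conjugate to $\calG_{11}$ and (b) fails, some element of $G_2\cap T$ has first coordinate $\ne 1$; a suitable power of it is $a=(-1,a_2)$, which commutes with a lift $t\tau$ of the generator (conjugation by $\tau$ inverts only the first coordinate, and $(-1)^{-1}=-1$), and again the fixed locus of $a$ lies over $\{(1\!:\!0),(0\!:\!1)\}$ in the first factor where $t\tau$ swaps those points; so $\langle a,t\tau\rangle$ is abelian with no fixed point. In the cases $\omega_T(G_2)$ conjugate to $1$ or $\calG_{12}$ there is nothing to violate, which completes all three implications.

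I expect the main obstacle to be the very first step: establishing that the best toric realisation of $G_2$ on $\bbP^1\times\bbP^1$ forces $\omega_T(G_2)$ into one of the four conjugacy classes $1,\calG_{10},\calG_{11},\calG_{12}$. This is what collapses the eight conjugacy classes of $2$-subgroups of $\calG_2\cong D_8$ down to the three-way split of condition~(3), and it needs a careful analysis of how $2$-subgroups of $\PGL_2(\bbC)$ sit relative to one-dimensional subtori (a twisted Klein four-group such as the one generated by $x_0\leftrightarrow x_1$ on one factor and $(x_0;y_0)\mapsto(-x_0;y_1)$ on the other is a telling example of why a naive realisation is not good enough). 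The subsequent fixed-point bookkeeping is routine, though in the $\calG_{11}$ and $\calG_{12}$ cases of $(3)\Rightarrow(1)$ one genuinely uses the structure of $G_2\cap T$ to locate the fixed points at infinity.
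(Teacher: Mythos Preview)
Your torus-maximizing step is the crux, and unfortunately it is simply false. Take $G_2\simeq C_2^4$ acting as the Klein four-group on each $\bbP^1$ factor separately; in the standard torus this is $T[2]\rtimes\calG_5$. Every maximal torus of $\PGL_2(\bbC)$ meets a Klein four-subgroup in at most one involution, so for \emph{every} torus $T'$ of $\bbP^1\times\bbP^1$ one has $|G_2\cap T'|\le 4$ and hence $|\omega_{T'}(G_2)|\ge 4$. Your mechanism (``a dihedral $D_{2n}$ meets a normalized torus in index~$2$'') produces no enlargement when $H_i\simeq D_4=C_2\times C_2$, and this is exactly what happens in the non-versal instances of~(c). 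So you cannot collapse case~(c) to $\omega_T(G_2)=\calG_{10}$ at the outset; ``all remaining cases'' really does mean all of $\calG_2,\calG_5,\calG_6,\calG_8,\calG_{10}$, and indeed Lemma~\ref{lem:versalMonomial} applies the present lemma for each of these values separately.

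The paper keeps case~(c) as \emph{every} $\omega_T(G_2)$ containing $\calG_{10}$ and places the torus change where it actually works. For $(2)\Rightarrow(3)$ one only needs some $g\in G_2$ with $\omega_T(g)$ the nontrivial element of $\calG_{10}$: then $g$ commutes with any involution $t\in G_2\cap T$, $\calG_{10}$ fixes only the trivial cone in $\Delta_{\bbP^1\times\bbP^1}$, and $\langle g,t\rangle$ is an abelian subgroup with no fixed point, contradicting Proposition~\ref{prop:abelSubgroups}. For $(3)\Rightarrow(1)$ one \emph{first} assumes $G_2\cap T=1$; this forces $G_2\hookrightarrow\calG_2$, so $H\subset\calG_5$ and each $H_i$ is cyclic of order $\le 2$; only then can one pass to a torus $T'\supset H$ and land back in case~(a). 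Your enlarging idea is exactly this step, but it needs the hypothesis $G_2\cap T=1$ to get off the ground --- it is a consequence of~(3), not a preliminary normalization available before the case split. Your arguments for cases~(a) and~(b), and your $(2)\Rightarrow(3)$ for $\calG_{10}$, are fine and essentially match the paper.
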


\begin{proof}
Recall that $\calG_2$ is the maximal finite $2$-subgroup of $\GL_2(\bbZ)$ up to conjugacy.  Similarly to Lemma \ref{lem:versal3groups} above, we may assume $G_2 \subset T \rtimes \calG_2$ and $X=Y$.

The implication $(1) \implies (2)$ is immediate by Corollary \ref{cor:fixedPointImpliesVersal}.  We prove the remaining implications by restricting to each case in $(3)$.

\smallskip \noindent
{\bf Case $(a)$: $\omega_T(G_2)$ is conjugate to $1$ or $\calG_{12}$.}

There are no additional conditions so it suffices to show $(1)$ always holds.  When $\omega_T(G_2)=1$, this is immediate from Lemma \ref{lem:justTorus}.  For $\omega_T(G_2)$ conjugate to $\calG_{12}$ we choose coordinates so that $\omega_T(G_2)=\calG_{12}$ and use the fan $\Delta_{\bbP^1 \times \bbP^1}$ as above.  The cone $\sigma$ spanned by $\{(1,0),(0,1)\}$ is fixed by the action of $\calG_{12}$ and, since it is a maximal cone, corresponds to a $T$-fixed point.  Thus the $T$-orbit corresponding to $\sigma$ is a $G$-fixed point.

\smallskip \noindent
{\bf Case $(b)$: $\omega_T(G_2)$ is conjugate to $\calG_{11}$.}

It suffices to assume that $\omega_T(G_2) = \calG_{11}$ and $Y$ is constructed from the fan $\Delta_{\bbP^1 \times \bbP^1}$. Assume $(3)$ does not hold; we will show that this implies $(2)$ cannot hold.  There exists an element $t \in G_2 \cap T$ of the form $t = (\lambda_1,\lambda_2) \subset (\bbC^\times)^2$ where $\lambda_1 \ne 1$.  Furthermore, by taking appropriate powers, we may assume $\lambda_1$ has order $2$.  Now consider $g \in G_2$ such that $\omega_T(g) \ne 1$.  The group $A = \langle g, t \rangle$ is an abelian subgroup of $G_2$.

Note that $\calG_{11}$ (and thus $g$) only fixes the cones spanned by $\{(0,1)\}$, $\{(0,-1)\}$ and $\{0\}$ in $\Delta_{\bbP^1 \times \bbP^1}$.  The element $t$ acts non-trivially on the $T$-orbits corresponding to those cones (and so has no fixed points there).  We have an abelian subgroup $A$ without a fixed point.  This contradicts Proposition \ref{prop:abelSubgroups}.  Thus $(2)$ does not hold.  We have shown $(2) \implies (3)$.

Now assume $(3)$ holds.  Recall the definitions of $H$, $H_1$ and $H_2$ from the proof of Lemma \ref{lem:P1P1versalMonomial}.  In this case $G_2=H$; and $H_1$, $H_2$ are cyclic.  Thus $H_1$ has a fixed point $p_1$ on the first $\bbP^1$ and and $H_2$ has a fixed point $p_2$ on the second.  The point $(p_1,p_2)$ is a $G_2$-fixed point of $Y$.  Thus $(3) \implies (1) \implies (2)$.

\smallskip \noindent
{\bf Case $(c)$: all remaining cases.}

Assume $(3)$ does not hold.  Recall the subgroup structure of $\calG_2$ from figure \ref{fig:latticeGLZ}.  We must have $\calG_{10} \subset \omega_T(G_2)$.  If $G_2 \cap T \ne 1$ then there exists an element $t \in G_2 \cap T$ of order $2$.  The element $t$ commutes with the action of $\calG_{10}$.  Let $g \in G_2$ be an element such that $1 \ne \omega_T(g) \subset \calG_{10}$.  The group $A = \langle g, t \rangle$ is an abelian subgroup of $G_2$.

Note that $\calG_{10}$ only fixes the trivial cone $\{0\}$ in $\Delta_{\bbP^1 \times \bbP^1}$ so any $A$-fixed point must be on the torus.  The element $t$ does not fix any point on the torus.  We have an abelian subgroup $A$ without a fixed point.  This contradicts Proposition \ref{prop:abelSubgroups}.  Thus $(2)$ does not hold.  We have shown $(2) \implies (3)$.

Now assume $(3)$ holds.  In this case, $H_1$ and $H_2$ are cyclic.  A cyclic subgroup of $\PGL_2(\bbC)$ lies in some torus $\bbC^\times$.  Thus we may find new coordinates with a different torus $T' \subset Y$ such that $H \subset T'$.  Note that, for any $g \in G_2$, $g^2 \in T'$ so $\omega_{T'}(g)$ has order $2$.  Also, $\calG_{10}$ and $\calG_{11}$ are contained in $H$.  Thus, any $g \in G_2 - H$ must have $\omega_{T'}(g)$ conjugate to the non-trivial element in $\calG_{12}$.  So $G_2 \subset T' \rtimes \calG_{12}$ and has a fixed point by case $(a)$.  We have shown $(3) \implies (1) \implies (2)$.
\end{proof}

\begin{lem}\label{lem:versalMonomial}
All finite subgroups $G$ of the following groups have versal monomial actions on a toric surface:
\begin{enumerate}
\item[(\ref{thm:ed2:GL2}*)] $T \rtimes \calG_{12}$.
\item[(\ref{thm:ed2:G1})] $T \rtimes \calG_1$ with $|G \cap T|$ coprime to $2$ and $3$,
\item[(\ref{thm:ed2:G2})] $T \rtimes \calG_2$ with $|G \cap T|$ coprime to $2$,
\item[(\ref{thm:ed2:G3})] $T \rtimes \calG_3$ with $|G \cap T|$ coprime to $3$,
\item[(\ref{thm:ed2:G4})] $T \rtimes \calG_4$ with $|G \cap T|$ coprime to $3$,
\end{enumerate}
Furthermore, any finite group with a versal monomial action on a toric surface is of this form.
\end{lem}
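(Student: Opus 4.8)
The plan is to check versality via Corollary \ref{cor:versalByPGroups}: a monomial $G$-action on a toric surface is versal if and only if its restriction to each Sylow $p$-subgroup is versal, and by Lemmas \ref{lem:versal3groups} and \ref{lem:versal2groups} this reduces to explicit arithmetic conditions on $\omega_T$ and $G\cap T$ for $p=2,3$ (with no condition for $p>5$ by Lemma \ref{lem:justTorus}, since $\GL_2(\bbZ)$ has no $p$-torsion there). So the whole statement becomes a finite bookkeeping exercise over the subgroup lattice of $\GL_2(\bbZ)$ in Figure \ref{fig:latticeGLZ}.

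First I would establish the forward direction: a finite $G$ acting versally and monomially on a toric surface may, after choosing a suitable fan in the lattice $N=\bbZ^2$, be assumed to sit inside $T\rtimes\Aut(N,\Delta)$; since $\calG_1$ and $\calG_2$ are the maximal finite subgroups of $\GL_2(\bbZ)$ up to conjugacy, $\omega_T(G)$ is conjugate into $\calG_1$ or $\calG_2$. I then pass to a Sylow $3$-subgroup $G_3$ and a Sylow $2$-subgroup $G_2$. Versality of $G$ forces versality of $G_3$ and $G_2$ (Proposition \ref{prop:versalSubgroups} together with Corollary \ref{cor:versalByPGroups}). Lemma \ref{lem:versal3groups}(b) then says: if $3 \mid |G\cap T|$, then $\omega_T(G_3)$ cannot be all of $\calG_9$; equivalently, either the $3$-part of $\omega_T(G)$ is trivial or $|G\cap T|$ is coprime to $3$. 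Similarly Lemma \ref{lem:versal2groups} says: if a nontrivial element of $G\cap T$ of $2$-power order exists, then $\omega_T(G_2)$ must be conjugate into $\calG_{12}$ or into $\calG_{11}$ (with the element of the right form). Translating these constraints through the lattice gives exactly the cases listed: if $\omega_T(G)$ is conjugate into $\calG_1$ one needs $|G\cap T|$ coprime to $2$ and $3$ (case \ref{thm:ed2:G1}); into $\calG_2$ needs coprime to $2$ (case \ref{thm:ed2:G2}); into $\calG_3$ or $\calG_4$ needs coprime to $3$; and the small $2$-groups $\calG_{11},\calG_{12}$ allow extra $2$-torsion in $G\cap T$, subsumed under $T\rtimes\calG_{12}$ and $\GL_2(\bbC)$ (case \ref{thm:ed2:GL2}*). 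I would organize this as a short case analysis: for each conjugacy class $\calG_i$ containing $\omega_T(G)$, read off which primes divide the order of $\calG_i$, and intersect the constraints from the $2$- and $3$-group lemmas.

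For the converse — that every $G$ of one of the listed forms does act versally and monomially — I would again invoke Corollary \ref{cor:versalByPGroups}. Monomiality is automatic by construction (the action is inside $T\rtimes\calG_i$, and I pass to the standard fan with the relevant $\Aut(N,\Delta)$, possibly $\Delta_{DP_6}$ for $\calG_1$). For versality it suffices to verify $G_p$-versality for each $p$. For $p>5$ this is Lemma \ref{lem:justTorus}. For $p=3$: the coprimality hypothesis forces $G_3\cap T=1$, so by Lemma \ref{lem:versal3groups}(3)(b) $G_3$ is versal. For $p=2$: in cases \ref{thm:ed2:G1}, \ref{thm:ed2:G2} the hypothesis $2\nmid|G\cap T|$ gives $G_2\cap T=1$, so Lemma \ref{lem:versal2groups}(c) (or (a)) applies; in cases \ref{thm:ed2:G3}, \ref{thm:ed2:G4} one has $\calG_3,\calG_4\simeq S_3$ with $2$-Sylow of order $2$ conjugate to $\calG_{12}$ or $\calG_{11}$, so Lemma \ref{lem:versal2groups}(a) or (b) applies directly (the only $2$-torsion in $G\cap T$ is permitted by (b)); in case \ref{thm:ed2:GL2}* the relevant reflection $\calG_{12}$ fixes a maximal cone and Lemma \ref{lem:versal2groups}(a) applies. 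Hence every Sylow subgroup is versal, so $G$ is.

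The main obstacle is the forward direction's claim that $\omega_T(G)$ is \emph{simultaneously} constrained correctly for the $2$- and $3$-parts — i.e.\ that a single choice of coordinates can be made in which both Sylow lemmas are applied consistently, since Lemmas \ref{lem:versal3groups} and \ref{lem:versal2groups} each allow rechoosing the fan/basis. The resolution is that changing the fan while fixing the lattice $N$ is a $G$-birational operation and versality is a birational invariant, so one may treat the $2$- and $3$-Sylow analyses independently: each gives a constraint on the conjugacy class of $\omega_T(G_p)$ in $\GL_2(\bbZ)$, and these constraints on $\omega_T(G)$ combine by the lattice structure regardless of coordinates. The bookkeeping that this combination yields precisely the five listed families — no more, no fewer — is the part requiring care, but it is purely a matter of reading Figure \ref{fig:latticeGLZ}.
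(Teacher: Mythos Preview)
Your overall strategy matches the paper's exactly: reduce to Sylow $p$-subgroups via Corollary~\ref{cor:versalByPGroups}, dispose of $p\ge 5$ by Lemma~\ref{lem:justTorus}, and read off the $p=2,3$ conditions from Lemmas~\ref{lem:versal3groups} and~\ref{lem:versal2groups} by running through the conjugacy classes $\calG_i$ in Table~\ref{tab:conjClassGLZ}. The paper organizes this into an explicit table (Table~\ref{tab:versalConditions}) and then absorbs the non-maximal classes $\calG_5,\calG_6,\calG_7,\calG_8,\calG_9,\calG_{10}$ and the trivial class into the five listed families, just as you outline.

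There is, however, one genuine gap. You assert that the $\calG_{11}$ case is ``subsumed under $T\rtimes\calG_{12}$'' and that the whole forward direction is ``purely a matter of reading Figure~\ref{fig:latticeGLZ}.'' It is not. When $\omega_T(G)=\calG_{11}$, the versality criterion in Lemma~\ref{lem:versal2groups}(3b) is the special condition that every $t\in G\cap T$ has the form $(1,\lambda)$ --- this is neither ``$|G\cap T|$ coprime to $2$'' nor ``no condition,'' and nothing in the subgroup lattice tells you that such a $G$ can be re-embedded into $T\rtimes\calG_{12}$. The paper handles this by observing that $G\subset T\rtimes\calG_{11}$ forces $G\subset D_{2n}\times C_m$ (via the product structure $H\subset H_1\times H_2$ on $\bbP^1\times\bbP^1$), that condition~(3b) forces $n$ odd, and then by writing down an explicit monomial embedding of $D_{2n}\times C_m$ (with $n$ odd) into $\GL_2(\bbC)$, hence into $T\rtimes\calG_{12}$. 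Without this step your classification is incomplete: you have not shown that the $\calG_{11}$ family produces no groups beyond those already in~(\ref{thm:ed2:GL2}*).

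A minor correction: in the converse for cases~(\ref{thm:ed2:G3}) and~(\ref{thm:ed2:G4}) the $2$-Sylow of $\omega_T(G)$ is conjugate to $\calG_{12}$ in both cases (not ``$\calG_{12}$ or $\calG_{11}$''), so Lemma~\ref{lem:versal2groups}(a) applies with no condition on the $2$-part of $G\cap T$, which is why only the coprimality to $3$ appears.
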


\begin{proof}
Recall that when deciding whether a group $G$ has a versal action on a complete non-singular toric variety it suffices to check Sylow $p$-subgroups $G_p$ (Corollary \ref{cor:versalByPGroups}).  For all the forms above, $G_p$ is always versal when $p \ge 5$ by Lemma \ref{lem:justTorus}.  So one only needs to check the Sylow $3$- and $2$-subgroups.

Any finite $G$ with a monomial action can be written in the form $G \subset T \rtimes \calG_i$ where $\calG_i$ is from Table \ref{tab:conjClassGLZ}.  From Lemma \ref{lem:versal3groups} and Lemma \ref{lem:versal2groups} we have necessary and sufficient conditions for $G_2$ and $G_3$ to be versal.

We note $G_3 \cap T = 1$ is equivalent to $|G \cap T|$ coprime to $3$ and similarly for $G_2$.  By selecting appropriate Sylow subgroups we have Table \ref{tab:versalConditions} where the last row gives the necessary and sufficient conditions for $G$ to be versal.

\begin{table}[ht]
\begin{center}
\begin{tabular}{|c|c|c|c|}
\hline
$\omega_T(G)$ & $\omega_T(G_3)$ & $\omega_T(G_2)$ & $|G \cap T|$ coprime to\\
\hline\hline
\spaceHack $\calG_1$ & $\calG_9$ & $\calG_6$ & 2, 3 \\ \hline
\spaceHack $\calG_2$ & $1$ & $\calG_2$ & 2 \\ \hline
\spaceHack $\calG_3$ & $\calG_9$ & $\calG_{12}$ & 3 \\ \hline
\spaceHack $\calG_4$ & $\calG_9$ & $\calG_{12}$ & 3 \\ \hline
\spaceHack $\calG_5$ & $1$ & $\calG_5$ & 2 \\ \hline
\spaceHack $\calG_6$ & $1$ & $\calG_6$ & 2 \\ \hline
\spaceHack $\calG_7$ & $\calG_9$ & $\calG_{10}$ & 2, 3 \\ \hline
\spaceHack $\calG_8$ & $1$ & $\calG_8$ & 2 \\ \hline
\spaceHack $\calG_9$ & $\calG_9$ & $1$ & 3 \\ \hline
\spaceHack $\calG_{10}$ & $1$ & $\calG_{10}$ & 2 \\ \hline
\spaceHack $\calG_{11}$ & $1$ & $\calG_{11}$ & special \\ \hline
\spaceHack $\calG_{12}$ & $1$ & $\calG_{12}$ & none \\ \hline
\spaceHack $1$ & $1$ & $1$ & none \\ \hline
\end{tabular}
\caption{Versality conditions for monomial actions on surfaces.}
\label{tab:versalConditions}
\end{center}
\end{table}

One sees that any group $G$ listed in the theorem is versal.

For the converse, we show that all of the other possibilities for $\omega_T(G)$ are already contained in a group appearing in the list.  The cases $\calG_5$, $\calG_6$, $\calG_8$ and $\calG_{10}$ are all covered by form (\ref{thm:ed2:G2}); $\calG_7$, by form (\ref{thm:ed2:G1}); $\calG_9$, by forms (\ref{thm:ed2:G3}) and (\ref{thm:ed2:G4}); and $\omega_T(G)=1$ by form (\ref{thm:ed2:GL2}*).

It remains to eliminate the special case $\calG_{11}$.  Here, $G=H\subset H_1 \times H_2$ in the language of the proof of Lemma \ref{lem:P1P1versalMonomial}.  Thus any finite subgroup $G$ of $T \rtimes \calG_{11}$ must be a subgroup of $D_{2n} \times C_m$ for sufficiently large integers $n$ and $m$.  From case (3b) of Lemma \ref{lem:versal2groups}, if $G$ is versal we can assume $n$ is odd.  We show that any such group is actually isomorphic to a group of form (\ref{thm:ed2:GL2}*) above.

Indeed, consider the following subgroup of $\GL_2(\bbC)$:
\[ \left\langle \mat{\zeta_n&0\\0&\zeta_n^{-1}}, \mat{\zeta_m&0\\0&\zeta_m}, \mat{0&1\\1&0} \right\rangle \]
where $\zeta_n$ and $\zeta_m$ are $n$th and $m$th roots of unity, respectively.  This group is isomorphic to $D_{2n} \times C_m$ and has an embedding into $T \rtimes \calG_{12}$.
\end{proof}

\subsection{Versal Actions on the Four Surfaces}

\begin{thm}\label{thm:4surfacesClassification}
Suppose a finite group $G$ has a versal action on $\bbP^2$, $\bbP^1 \times \bbP^1$, $DP_6$, or $DP_5$.  Then $G$ is finite subgroup of one of the following groups:
\begin{enumerate}
\item[(\ref{thm:ed2:GL2})] $\GL_2(\bbC)$,
\item[(\ref{thm:ed2:G1})] $T \rtimes \calG_1$ with $|G \cap T|$ coprime to $2$ and $3$,
\item[(\ref{thm:ed2:G2})] $T \rtimes \calG_2$ with $|G \cap T|$ coprime to $2$,
\item[(\ref{thm:ed2:G3})] $T \rtimes \calG_3$ with $|G \cap T|$ coprime to $3$,
\item[(\ref{thm:ed2:G4})] $T \rtimes \calG_4$ with $|G \cap T|$ coprime to $3$,
\item[(\ref{thm:ed2:PSL27})] $\PSL_2(\bbF_7)$,
\item[(\ref{thm:ed2:S5})] $S_5$.
\end{enumerate}
Furthermore, all finite subgroups of the above groups act versally on one of those surfaces.
\end{thm}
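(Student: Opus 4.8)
The plan is to treat the four surfaces $\bbP^2$, $\bbP^1\times\bbP^1$, $DP_6$ and $DP_5$ separately, leaning on the toric apparatus of Section~\ref{sec:toricVersal} for the first three and on $\Aut(DP_5)\simeq S_5$ for the last. I would begin with $DP_6$ and $\bbP^1\times\bbP^1$. Since $\Aut(DP_6)=T\rtimes\calG_1=\Aut_T(DP_6)$, every faithful $G$-action on $DP_6$ is monomial; and by Lemma~\ref{lem:P1P1versalMonomial} every \emph{versal} $G$-action on $\bbP^1\times\bbP^1$ is monomial. So in both cases a versal $G$-action is a versal monomial action on a toric surface, and Lemma~\ref{lem:versalMonomial} then forces $G$ to be isomorphic to a subgroup of one of the groups in items~(\ref{thm:ed2:GL2})--(\ref{thm:ed2:G4}) (using $T\rtimes\calG_{12}\subset\GL_2(\bbC)$). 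Conversely, Lemma~\ref{lem:versalMonomial} produces a versal monomial action on a toric surface for every finite subgroup of the groups in items~(\ref{thm:ed2:G1})--(\ref{thm:ed2:G4}), and any monomial action on a toric surface is $G$-equivariantly birational to one on $\bbP^1\times\bbP^1$ or $DP_6$ (because $\calG_1$ and $\calG_2$ are the maximal finite subgroups of $\GL_2(\bbZ)$ up to conjugacy); since versality is a birational invariant, this settles both directions for these two surfaces.

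For $\bbP^2$ I would use Corollary~\ref{cor:versalPn}: $\bbP^2$ is $G$-versal exactly when the action lifts to an embedding $G\hookrightarrow\GL_3(\bbC)$, that is, exactly when $G$ is isomorphic to a finite subgroup of $\PGL_3(\bbC)$ possessing an isomorphic lift to $\GL_3(\bbC)$ (equivalently, a faithful $3$-dimensional representation whose image contains no nontrivial scalar matrix). Here I would invoke the classical classification of finite subgroups of $\PGL_3(\bbC)$ (Blichfeldt) into intransitive, imprimitive, and primitive types and run through each. An intransitive $G$ stabilizes a point and a complementary line, so a scalar-free lift lies in $\GL_2(\bbC)\times\GL_1(\bbC)\subset\GL_3(\bbC)$; the twisted projection $(A,\mu)\mapsto\mu A$ to $\GL_2(\bbC)$ is injective on it (otherwise $-\mathrm{I}_3$ would belong to the lift), so $G\hookrightarrow\GL_2(\bbC)$, item~(\ref{thm:ed2:GL2}). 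An imprimitive $G$ preserves a triple of points, hence acts monomially on $\bbP^2$, so Lemma~\ref{lem:versalMonomial} places it inside one of items~(\ref{thm:ed2:GL2})--(\ref{thm:ed2:G4}). Among the primitive finite subgroups of $\PGL_3(\bbC)$---the icosahedral group $A_5$, the Klein group $\PSL_2(\bbF_7)$, the Valentiner group $A_6$, and the Hessian group of order $216$ together with its subgroups of orders $36$ and $72$---only $A_5$ and $\PSL_2(\bbF_7)$ lift to $\GL_3(\bbC)$: each has a faithful $3$-dimensional irreducible representation and, being simple, no scalars; $A_6$ lifts only to its triple cover $3.A_6$, and each Hessian-type group contains a normal $(\bbZ/3)^2$ whose preimage in $\GL_3(\bbC)$ is the non-split Heisenberg group of order $27$. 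This yields items~(\ref{thm:ed2:S5}) (as $A_5<S_5$) and~(\ref{thm:ed2:PSL27}). For the converse: every finite subgroup of $\GL_2(\bbC)$ acts faithfully on $\bbP^2$ through the standard block embedding $\GL_2(\bbC)\hookrightarrow\GL_3(\bbC)$, and this action lifts, hence is versal by Corollary~\ref{cor:versalPn}; and $\PSL_2(\bbF_7)$ acts faithfully on $\bbP^2$ through one of its $3$-dimensional irreducible representations, which again lifts, so it and all its subgroups act versally by Proposition~\ref{prop:versalSubgroups}.

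For $DP_5$ the inclusion is immediate: a faithful $G$-action gives $G\subset\Aut(DP_5)\simeq S_5$, item~(\ref{thm:ed2:S5}). For the converse it suffices, by Proposition~\ref{prop:versalSubgroups}, to exhibit a versal $S_5$-action on $DP_5$. I would use the classical $S_5$-equivariant identification of $DP_5$ with the compactified moduli space $\overline{M}_{0,5}$ of five labelled points on $\bbP^1$, with $S_5$ permuting the labels. The rational map $\bbC^5\dasharrow\overline{M}_{0,5}$ sending a tuple with distinct entries $(a_1,\dots,a_5)$ to the class of the configuration $([a_1\!:\!1],\dots,[a_5\!:\!1])$ is $S_5$-equivariant for the permutation action on $\bbC^5$ and dominant (every configuration of five distinct points on $\bbP^1$ is projectively equivalent to one contained in $\bbA^1$); since its source is a faithful linear $S_5$-variety and compressions of versal varieties are versal, $DP_5$ is $S_5$-versal. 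Assembling the four cases proves both implications.

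The step I expect to be the main obstacle is the analysis of $\bbP^2$: it relies on the classical classification of finite subgroups of $\PGL_3(\bbC)$, and one must carry out the lifting analysis carefully in each type---in particular ruling out the Valentiner and Hessian-type primitive groups and confirming that the intransitive and imprimitive subgroups which do lift fall precisely into items~(\ref{thm:ed2:GL2})--(\ref{thm:ed2:G4}). By contrast, the $DP_5$ argument is short once one adopts the moduli-space model, and the $DP_6$ and $\bbP^1\times\bbP^1$ cases are almost immediate from the monomial lemmas already established.
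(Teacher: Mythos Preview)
Your proposal follows essentially the same route as the paper: reduce $DP_6$ and $\bbP^1\times\bbP^1$ to the monomial situation via Lemma~\ref{lem:P1P1versalMonomial} and Lemma~\ref{lem:versalMonomial}; handle $\bbP^2$ via Corollary~\ref{cor:versalPn} and Blichfeldt's classification (intransitive/imprimitive/primitive corresponding to Blichfeldt's types A--B, C--D, E--J); and treat $DP_5$ via $\Aut(DP_5)\simeq S_5$ together with the Buhler--Reichstein construction, which is exactly your $\overline{M}_{0,5}$ argument.

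One small slip to fix: in the intransitive case your ``twisted projection'' $(A,\mu)\mapsto\mu A$ need not be injective on a scalar-free lift, and the parenthetical about $-\mathrm{I}_3$ does not follow. For instance $\tilde G=\langle\operatorname{diag}(\omega^{-1},\omega^{-1},\omega)\rangle\cong C_3$ (with $\omega$ a primitive cube root of unity) is scalar-free in $\GL_3(\bbC)$, sits in $\GL_2(\bbC)\times\GL_1(\bbC)$ as $(\omega^{-1}I_2,\omega)$, and is killed by your map. The cleanest repair is to avoid the lift entirely: the stabiliser in $\PGL_3(\bbC)$ of a point together with a complementary line is exactly the image of $A\mapsto[\operatorname{diag}(A,1)]$, and this map $\GL_2(\bbC)\to\PGL_3(\bbC)$ is injective, so any intransitive $G\subset\PGL_3(\bbC)$ already embeds in $\GL_2(\bbC)$. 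This is what the paper means by ``types A and B descend to subgroups of $\GL_2(\bbC)$.'' With that adjustment your argument goes through and matches the paper's.
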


\begin{proof}
Recall that any finite subgroup of $\GL_2(\bbC)$ acts versally on $\bbP^2$.  We note that any finite subgroup of $T \rtimes \calG_{12}$ is a subgroup of $\GL_2(\bbC)$.  So form (\ref{thm:ed2:GL2}*) in Lemma \ref{lem:versalMonomial} is wholly contained in form (\ref{thm:ed2:GL2}) of this theorem.

By Lemma \ref{lem:versalMonomial}, the versal monomial actions on \emph{any} toric surface are contained in forms (\ref{thm:ed2:GL2})--(\ref{thm:ed2:G4}) above.  Recall that the automorphism group of $DP_6$ is $T \rtimes \calG_1$ and the group of monomial automorphisms of $\bbP^1 \times \bbP^1$ is $T \rtimes \calG_2$.  Forms (\ref{thm:ed2:GL2})--(\ref{thm:ed2:G4}) all have versal actions on one of the $4$ surfaces.

It remains to study actions that are \emph{not} monomial.  All actions on $DP_6$ are monomial, and by Lemma \ref{lem:P1P1versalMonomial}, this is also true of versal actions on $\bbP^1 \times \bbP^1$.  Thus these surfaces require no more consideration.

We consider $DP_5$.  Recall from \cite[Section 6.2]{DolgachevIskovskikh2006Finite-subgroup} that a del Pezzo surface of degree $5$ can be described as a quotient $(\bbP^1)^5/\PSL_2(\bbC)$. The automorphism group of $DP_5$ is $S_5$ and its action is versal by the construction in \cite{BuhlerReichstein1997On-the-essentia}.  Thus, all subgroups of $S_5$ act versally on $DP_5$.

It remains to classify all finite groups acting versally on $\bbP^2$.  Recall that, by Corollary \ref{cor:versalPn}, it suffices to determine whether there is an isomorphic lift from $\PGL_3(\bbC)$ to $\GL_3(\bbC)$.  We refer to Blichfeldt's classification of finite subgroups of $\GL_3(\bbC)$ in \cite[Chapter V]{Blichfeldt1917Finite-Collinea}.  Using Blichfeldt's notation, we note that groups of types A and B descend to subgroups of $\GL_2(\bbC)$, and groups of type C and D descend to monomial actions on $\bbP^2$.  These groups have already been considered.

Finally, we consider the exceptions E--J in the classification.  Blichfeldt appends the symbol ``$\phi$'' to the order of a subgroup of $\GL_3(\bbC)$ when there is no isomorphic lift of its image in $\PGL_3(\bbC)$.  Consequently, only types H and J descend to versal actions --- these correspond to the groups $A_5$ and $\PSL_2(\bbF_7)$.
\end{proof}

\section{Conic Bundle Structures}\label{sec:conics}

Recall Manin and Iskovskikh's classification of minimal rational $G$-surfaces into conic bundles and del Pezzo surfaces from section \ref{sec:prelim:minimalModels}.  In this section, we establish the conic bundles case of Theorem \ref{thm:ed2to4surfaces}.  The del Pezzo surfaces case will be considered in section \ref{sec:DelPezzo}.

\begin{thm}\label{thm:conicVersal}
If $G$ has a versal action on a minimal conic bundle $X$ then $G$ has a versal action on $\bbP^1 \times \bbP^1$ or $\bbP^2$. 
\end{thm}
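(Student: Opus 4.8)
The plan is to go through the Dolgachev--Iskovskikh description of $G$-minimal conic bundles and, at each stage, to use the fixed-point obstruction of Proposition \ref{prop:abelSubgroups} --- every abelian subgroup of a proper versal $G$-variety has a fixed point --- to cut the possibilities down to cases that reduce to $\bbP^2$ or $\bbP^1\times\bbP^1$. Write $\pi : X \to \bbP^1$ for the conic bundle, let $G_B \subseteq \PGL_2(\bbC)$ be the image of $G$ acting on the base, $G_0 = \ker(G \to G_B)$ the fibrewise part, and let $m = 8 - K_X^2$ be the number of degenerate fibres; their images form a $G_B$-invariant subset $\Sigma \subseteq \bbP^1$ with $|\Sigma| = m$, and each degenerate fibre $F = \ell_1 \cup \ell_2$ has a unique node $q_F$, fixed by $\Stab_G(F)$ whether or not the two components are interchanged. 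Being in the conic bundle case with $X$ $G$-minimal means $\rho^G(X) = 2$: no $G$-orbit of pairwise disjoint components of degenerate fibres, one in each fibre, can be contracted.

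The case $m = 0$ is essentially formal: then $X \cong \mathbb{F}_n$, minimality rules out $n = 1$ (the unique $(-1)$-curve is $G$-stable), the case $n = 0$ is $\bbP^1\times\bbP^1$ and we are done, and for $n \ge 2$ the surface is a complete non-singular toric variety, so by Proposition \ref{prop:supersolvableToric} every Sylow subgroup of $G$ acts monomially; one then reads off the monomial form of $G$ from Lemma \ref{lem:versalMonomial} and concludes via Theorem \ref{thm:4surfacesClassification} that $G$ acts versally on $\bbP^2$ or $\bbP^1\times\bbP^1$. (Equivalently, the $G$-stable negative section lets one embed $G$ into $\GL_2(\bbC)$ and apply Corollary \ref{cor:versalPn}.)

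The case $m \ge 1$ is the heart of the argument and the main obstacle. Here versality is very restrictive. A non-cyclic abelian subgroup of $G_B$ cannot fix a point of $\bbP^1$, so Proposition \ref{prop:abelSubgroups} forces every abelian subgroup of $G_B$ to be cyclic, whence $G_B$ is cyclic or odd dihedral; and a non-cyclic abelian subgroup of $G_0$, acting fibrewise, can only fix points of $X$ at the nodes $q_F$ over $\Sigma$. Combined with $G$-minimality --- which forces, for each degenerate fibre, an element of $G$ fixing that fibre and swapping its two components --- these constraints should force $m$ and $|G_0|$ to be small, leaving only finitely many conic bundles to examine by hand; I expect these to be the del Pezzo surfaces of degree $5$ and $6$ with $\rho^G(X) = 2$ (together with a few non--del Pezzo surfaces of Picard rank $\le 3$ that are handled directly or reduce to the toric case). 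For the del Pezzo cases, the condition $\rho^G(X) = 2$ forces the image of $G$ in $\Aut(DP_6) = (\bbC^\times)^2 \rtimes D_{12}$, respectively in $\Aut(DP_5) = S_5$, into a subgroup already known --- via Lemma \ref{lem:versalMonomial} and Theorem \ref{thm:4surfacesClassification}, or via an embedding into $\GL_3(\bbC)$ and Corollary \ref{cor:versalPn} --- to act versally on $\bbP^2$ or $\bbP^1\times\bbP^1$. The delicate part throughout is the bookkeeping: extracting the bound on $m$ and $|G_0|$ from the fixed-point constraint, and then, for each surviving conic bundle, exhibiting the concrete equivariant reduction to $\bbP^2$ or $\bbP^1\times\bbP^1$.
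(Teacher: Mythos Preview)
Your plan diverges from the paper's in its organizing principle, and the divergence hides a genuine gap. You propose to bound the number $m$ of singular fibres and the order of the fibrewise kernel, expecting that versality together with $G$-minimality leaves only finitely many conic bundles (essentially $DP_5$ and $DP_6$ with $\rho^G=2$) to inspect. This expectation is false: the exceptional conic bundles (where the fibrewise kernel is odd dihedral) and the non-exceptional ones (where it is $C_2\times C_2$) form infinite families with $m$ unbounded, and they \emph{do} carry versal actions of groups that one must then show act versally on $\bbP^2$ or $\bbP^1\times\bbP^1$. No amount of fixed-point bookkeeping will collapse these families to a finite list of surfaces; the reduction must go through the group, not the surface.

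The paper's proof is organized instead around the structure of $G_K=\ker(G\to G_B)$. First it disposes of every $G_K$ having a characteristic subgroup of order $2$ via Proposition~\ref{prop:centreMeansP2} (non-trivial centre $\Rightarrow$ $G\subset\GL_2(\bbC)$), leaving $G_K$ odd cyclic, odd dihedral, $C_2\times C_2$, $A_4$, $S_4$, or $A_5$. A faithfulness-on-components argument then forces $G_0$ (the subgroup fixing every component) to be cyclic when $m\ge 1$, and since $G_K/G_0\subset (C_2)^r$ this eliminates $A_4$, $S_4$, $A_5$. The three surviving cases are then handled by separate lemmas: odd cyclic $G_K$ forces $X$ to be ruled; odd dihedral $G_K$ forces $G\cong G_K\times G_B$ with $G_B$ cyclic or odd dihedral; and $G_K\cong C_2\times C_2$ is handled by a delicate analysis of the map $G_B\to\Aut(G_K)\cong S_3$. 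Note also that your claim ``every abelian subgroup of $G_B$ is cyclic'' does not follow directly from Proposition~\ref{prop:abelSubgroups}: that proposition constrains abelian subgroups of $G$, and an abelian subgroup of the quotient $G_B$ need not lift to an abelian subgroup of $G$. The paper obtains the needed restrictions on $G_B$ only case by case, after the structure of $G_K$ is pinned down.
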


All of the following facts about conic bundle structures can be found in \cite[Section 5]{DolgachevIskovskikh2006Finite-subgroup} or \cite{Iskovskih1979Minimal-models-}.  A \emph{conic bundle structure} on a rational $G$-surface $X$ is a $G$-equivariant morphism $\phi : X \to B$ such that $B \simeq \bbP^1$ and the fibres are isomorphic to reduced conics in $\bbP^2$.  Note that, unlike del Pezzo surfaces, the $G$-action is required for this definition to make sense.  There may exist other group actions where $X$ does \emph{not} have such a structure (for example, not all actions on $\bbP^1 \times \bbP^1$ respect the fibration).

A fibre $F$ of the morphism $\phi$ is either isomorphic to $\bbP^1$ or to $\bbP^1 \wedge \bbP^1$ (two copies of $\bbP^1$ meeting at a point).  In the first case, $\Aut(F) \simeq \PGL_2(\bbC)$; in the second, $\Aut(F)$ has a monomial representation of degree $2$ (in particular, it is a subgroup of $\GL_2(\bbC)$).

Let $\mathbf{F}_n$ be the ruled surface $\bbP(\calO_{\bbP^1}\oplus \calO_{\bbP^1}(n))$ for a non-negative integer $n$ (see, for example, \cite[Section V.2]{Hartshorne1977Algebraic-geome}). A conic bundle is either isomorphic to some $\mathbf{F}_n$ or to a surface obtained from some $\mathbf{F}_n$ by blowing up a finite set of points, no two lying in a fibre of a ruling.

Let $\pi : G \to \PGL_2(\bbC)$ be the map induced by the action of $G$ on $B$ under $\phi$.  Let $G_B = \im(\pi)$ and $G_K = \ker(\pi)$.  One may consider $G_K$ as the largest subgroup of $G$ which preserves the generic fibre.  Note that every fibre of $\phi$ is $G_K$-invariant.  It is useful to think of $G_K$ as the group that ``acts on the fibre'' and $G_B$ as the group that ``acts on the base.''  Both $G_K$ and $G_B$ are polyhedral groups since they act faithfully on rational curves.

Let $\Sigma = \{p_1, \ldots, p_r \}$ be the set of points on $B$ whose fibres are singular.  Let $\calR$ be the set of components of singular fibres $\{ R_1 ,R_1', \ldots, R_r,R_r' \}$ where $R_i$ and $R_i'$ are the two components of the fibre $\phi^{-1}(p_i)$ for each $p_i \in \Sigma$.  We have a natural map $\xi : G \to \Aut(\calR)$ where $\Aut(\calR)$ is the group of permutations of $\calR$.  Let us denote $G_0 = \ker(\xi) \cap G_K$ (note that this differs slightly from the definition in \cite[Section 5.4]{DolgachevIskovskikh2006Finite-subgroup}).

\begin{proof}[Proof of Theorem \ref{thm:conicVersal}]
We prove the theorem by considering the different possibilities for $G_K$.  We suggest reviewing the results of section \ref{sec:prelim:polyhedral}.

Note that if $G_K$ contains a characteristic subgroup of order $2$ then $G$ has a non-trivial centre and, thus, a versal action on $\bbP^2$ by Proposition \ref{prop:centreMeansP2}.  The polyhedral groups with characteristic subgroups of order $2$ are the dihedral groups $D_{4n}$ with $n \ge 2$, and the cyclic groups of even order.  It remains to consider $G_K$ of the following types: odd cyclic, odd dihedral, $C_2 \times C_2$, $A_4$, $S_4$ and $A_5$.

By Lemma \ref{lem:G0faithfulOnFibres} below, $G_0$ acts faithfully on every component of every fibre of $\phi$.  If $S$ has no singular fibres then $X$ is a ruled surface and we may apply Lemma \ref{lem:conicRuled}.  Consequently, we may assume $\pi$ has a singular fibre $F$.  So $G_0$ acts faithfully on an irreducible component of $F$ with a fixed point.  Any such component is isomorphic to $\bbP^1$.  The only polyhedral groups with fixed points are the cyclic groups, so $G_0$ is odd cyclic.

Note that $G_K$ can only permute components of the same fibre, thus $\xi(G_K) \subset (C_2)^r$.  We have a normal structure with $G_0 \lhd G_K$ cyclic and $G_K/G_0 \subset (C_2)^r$.  This excludes $G_K \simeq A_4$, $G_K \simeq S_4$ and $G_K \simeq A_5$.  Thus it remains only to consider groups $G_K$ that are odd cyclic, odd dihedral or isomorphic to $C_2 \times C_2$.

These remaining cases are handled by the lemmas below.  If $G_K$ is odd cyclic then the result follows by Lemma \ref{lem:GKcyclic} below; this case corresponds to $X$ being a ruled surface.  If $G_K$ is odd dihedral then Lemma \ref{lem:GKdihedral} applies; these surfaces are the ``exceptional conic bundles'' of \cite[Section 5.2]{DolgachevIskovskikh2006Finite-subgroup}.  Finally, if $G_K \simeq C_2 \times C_2$ then Lemma \ref{lem:GKC2C2} applies; these are all ``non-exceptional conic bundles'' as in \cite[Section 5.4]{DolgachevIskovskikh2006Finite-subgroup}.
\end{proof}

\begin{lem}\label{lem:conicRuled}
If $X$ is a ruled surface with a versal $G$-action then $G$ acts versally on $\bbP^1 \times \bbP^1$ or $\bbP^2$.
\end{lem}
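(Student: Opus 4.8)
The plan is to recall that a ruled surface $\mathbf{F}_n = \bbP(\calO_{\bbP^1} \oplus \calO_{\bbP^1}(n))$ comes with a $\bbP^1$-bundle structure $p : X \to \bbP^1$, and to analyze the $G$-action through the induced maps on base and fibre. First I would set $\pi : G \to \PGL_2(\bbC)$ to be the action on the base $\bbP^1$, with $G_B = \im(\pi)$ and $G_K = \ker(\pi)$; here $G_K$ acts on the generic fibre $\bbP^1$, so $G_K$ is a polyhedral group. Since $X$ is versal, Proposition \ref{prop:abelSubgroups} forces all abelian subgroups of $G$ to have fixed points, and the only polyhedral groups with a fixed point on $\bbP^1$ are cyclic; applying this to $G_K$ (which is normal in $G$, hence its characteristic subgroups are central in $G$ after passing to suitable subgroups) and invoking Proposition \ref{prop:centreMeansP2} to dispatch any case where $G$ acquires a non-trivial centre, I expect to reduce to $G_K$ cyclic. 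In that situation $X$ is $G$-birational to a toric surface: a ruled surface with $G_K$ acting by scaling on fibres is equivariantly birational to $\bbP^1 \times \bbP^1$ (or $\mathbf{F}_0$), on which the $G$-action becomes monomial, i.e. $G \subset T \rtimes \calG_i$ for some $\calG_i$ in Table \ref{tab:conjClassGLZ}.

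Concretely, I would argue that after a $G$-equivariant birational modification we may assume $X = \mathbf{F}_0 = \bbP^1 \times \bbP^1$, and that $G$ normalizes the torus $T = (\bbC^\times)^2$: the fibrewise part acts through a one-dimensional torus in each fibre, and $G_B$ acts on the base through a polyhedral group which — being forced by versality and Proposition \ref{prop:abelSubgroups} to have abelian subgroups with fixed points, hence to be cyclic or dihedral — can be conjugated into the normalizer of a maximal torus of $\PGL_2$. Thus the $G$-action on $\bbP^1 \times \bbP^1$ is monomial. Then I would quote Lemma \ref{lem:versalMonomial}: a finite group with a versal monomial action on a toric surface lies in one of the forms (\ref{thm:ed2:GL2}*)–(\ref{thm:ed2:G4}), and each of these has a versal action on one of the four surfaces; in particular $G$ acts versally on $\bbP^1 \times \bbP^1$ or (via $T \rtimes \calG_{12} \subset \GL_2(\bbC)$) on $\bbP^2$. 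Alternatively, once $G$ has a $G$-fixed point on the toric model, Corollary \ref{cor:fixedPointImpliesVersal} directly gives versality, and Corollary \ref{cor:versalByPGroups} lets me verify versality prime-by-prime, reducing to the $3$-group and $2$-group analyses of Lemmas \ref{lem:versal3groups} and \ref{lem:versal2groups}.

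The main obstacle I anticipate is the bookkeeping needed to show that the $G$-action on the ruled surface really can be put in monomial form: one must handle the possibility that $G_B$ is dihedral rather than cyclic (the normalizer-of-torus case), control how $G$ interleaves the base and fibre actions, and rule out $G_B \simeq A_4, S_4, A_5$ using the versality constraint (Proposition \ref{prop:abelSubgroups} applied to abelian subgroups such as $C_2 \times C_2 \subset A_4$, which must have fixed points — impossible for a faithful action of $C_2\times C_2$ on a ruled surface with no common fixed point). A secondary subtlety is ensuring the equivariant birational identification of an arbitrary $\mathbf{F}_n$ (with its $G$-action) with $\mathbf{F}_0$ or with a toric surface in the same lattice; this is where one uses that changing the fan in a fixed lattice $N \simeq \bbZ^2$ induces a $G$-equivariant birational map, exactly as in the proofs of Lemmas \ref{lem:versal3groups} and \ref{lem:versal2groups}. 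Once the action is monomial, everything reduces to Lemma \ref{lem:versalMonomial} and the conclusion is immediate.
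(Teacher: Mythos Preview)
Your plan diverges from the paper's proof and, as written, has real gaps. The paper's argument is short: for $X = \bbP^1\times\bbP^1$ there is nothing to prove, and for $X = \bfF_n$ with $n\ge 1$ one invokes \cite[Theorem 4.10]{DolgachevIskovskikh2006Finite-subgroup}, which says that every finite subgroup of $\Aut(\bfF_n)$ is a central extension of a finite subgroup of $\PGL_2(\bbC)$ or of $\SL_2(\bbC)$. If the centre is nontrivial, Proposition~\ref{prop:centreMeansP2} gives a versal action on $\bbP^2$; if the centre is trivial, $G$ itself embeds in $\PGL_2(\bbC)$ or $\SL_2(\bbC)$, and every such finite group already acts versally on $\bbP^2$. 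No equivariant birational reduction to $\bbP^1\times\bbP^1$ and no monomial analysis are needed.

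Your route through monomial actions has two concrete problems. First, the reduction to $G_K$ cyclic is incomplete: invoking Proposition~\ref{prop:centreMeansP2} disposes only of those $G_K$ with a characteristic subgroup of order $2$, and applying Proposition~\ref{prop:abelSubgroups} to a $C_2\times C_2$ inside $G_K$ (which acts on each fibre $\bbP^1$ without fixed points) eliminates $A_4$, $S_4$, $A_5$, $C_2\times C_2$ and even dihedral $G_K$ --- but \emph{odd dihedral} $G_K$ survives both tests, since it has trivial centre and contains no $C_2\times C_2$. Second, the key assertion that $X$ is $G$-equivariantly birational to $\bbP^1\times\bbP^1$ with a monomial $G$-action is not justified: elementary transformations $\bfF_n \dasharrow \bfF_{n\pm 1}$ require $G$-invariant centres to blow up, and your appeal to ``changing the fan in the same lattice'' already presupposes the action is $T$-stable for some torus, which is exactly what is in question. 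Likewise, your proposed elimination of $G_B \in \{A_4,S_4,A_5\}$ via $C_2\times C_2 \subset G_B$ needs an \emph{abelian} lift of that $C_2\times C_2$ to $G$, which is not automatic once $G_K\ne 1$. The paper's structural description of $\Aut(\bfF_n)$ bypasses all of this in one stroke.
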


\begin{proof}
If $X$ is $\bbP^1 \times \bbP^1$ then we are done.  Otherwise, from \cite[Theorem 4.10]{DolgachevIskovskikh2006Finite-subgroup} we see that any finite group acting on a ruled surface is a central extension of a finite subgroup of $\PGL_2(\bbC)$ or $\SL_2(\bbC)$.  Any finite subgroup $G$ of such an extension has a versal action on $\bbP^2$.  Indeed, it suffices to consider $G$ with trivial centre.  Any such $G$ then embeds into $\PGL_2(\bbC)$ or $\SL_2(\bbC)$.  All polyhedral groups have versal actions on $\bbP^2$ (see proof of Theorem \ref{thm:4surfacesClassification}); as do all finite subgroups of $\SL_2(\bbC)$.
\end{proof}

\begin{lem}\label{lem:G0faithfulOnFibres}
The group $G_0$ acts faithfully on every component of every fibre of $\phi$.
\end{lem}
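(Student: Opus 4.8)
The group $G_0$ acts faithfully on every component of every fibre of $\phi$.

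Here is the plan. The group $G_0 = \ker(\xi) \cap G_K$ consists of automorphisms that preserve every fibre (since $G_0 \subset G_K$) and fix every component of every singular fibre (since $G_0 \subset \ker(\xi)$). So for each fibre $F$ and each irreducible component $C$ of $F$, the group $G_0$ acts on $C$ (which is isomorphic to $\bbP^1$), and I want to show this action is faithful. Suppose some nontrivial $g \in G_0$ acts trivially on a component $C$ of a fibre $F$; I will derive a contradiction with the faithfulness of the $G$-action on $X$.

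The key geometric input is that $X$ is a smooth surface carrying the conic bundle morphism $\phi$, and the fixed locus $X^g$ of a finite-order automorphism $g$ is smooth. First I would handle the case where $F \simeq \bbP^1$ is a smooth fibre: if $g$ fixes $F$ pointwise, then $F \subset X^g$; since $X^g$ is a smooth curve near a general point of $F$ and $F$ has self-intersection $0$, the component $F$ is a connected component of $X^g$. The crucial point is then a deformation/connectedness argument: the smooth fibres of $\phi$ form a dense family, and the locus of fibres on which $g$ acts trivially is closed in $B \simeq \bbP^1$ (it is the set of $t$ where $\phi^{-1}(t) \subset X^g$); if it is all of $B$, then $g$ fixes a dense subset of $X$, hence all of $X$, contradicting faithfulness of the $G$-action. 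So the set of ``bad'' fibres is finite, but then I must rule out $g$ acting trivially on a single component $C$ of a singular fibre $F = \phi^{-1}(p)$. Here $C \simeq \bbP^1$ has self-intersection $-1$ and meets the other component $C'$ transversally at one point $q$. If $g$ fixes $C$ pointwise then $q \in X^g$; since $g$ fixes $q$ and acts on the tangent space $T_q X$ which splits as $T_q C \oplus T_q C'$, and $g$ acts trivially on $T_q C$, by smoothness of $X^g$ at $q$ either $g$ is the identity near $q$ (hence everywhere, contradiction) or $X^g$ near $q$ is exactly $C$ and $g$ acts nontrivially on $T_q C'$, i.e. with a nonzero weight on the transverse direction. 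I then need to propagate this: since $C'$ is also a $\bbP^1$ with $g$ fixing $q$, $g$ acts on $C'$ fixing $q$, and one checks whether this is compatible — using that $C'$ connects to the rest of the conic bundle (it meets the adjacent structure in a way constrained by the minimality and the shape $\mathbf{F}_n$ or its blow-ups). The cleanest route is: such a $g$ would act trivially on a section or on the base in a way that, combined with $g \in G_K$ (trivial on the base), forces $g$ to be trivial on a neighbourhood of a smooth fibre adjacent to $p$, reducing to the previous paragraph.

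The main obstacle I anticipate is the singular-fibre case: making rigorous the claim that $g$ cannot fix one component $C$ of $\phi^{-1}(p)$ pointwise while being nontrivial. The honest argument probably goes through the structure of $X$ as a blow-up of a ruled surface $\mathbf{F}_n$: the component $C$ of a singular fibre is an exceptional curve of the blow-up, so blowing it down $G_0$-equivariantly (it is $G_0$-invariant since $g$ fixes it) gives a surface on which the image of $g$ still acts; iterating, one reaches $\mathbf{F}_n$, where a nontrivial element of $G_K$ acting trivially on some fibre would have to act trivially on the whole ruled surface (the fibres of $\mathbf{F}_n \to \bbP^1$ are algebraically equivalent and $g$ acts trivially on $\mathrm{Pic}$ of a fibre forcing triviality on all of them by rigidity), contradicting faithfulness. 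Alternatively, one can argue directly on $X$ using the holomorphic Lefschetz fixed-point formula or the fact that a nontrivial automorphism of a smooth projective surface has fixed locus of dimension $\le 1$, and a one-dimensional fixed component has non-negative self-intersection when the automorphism is not the identity on an ambient smooth surface near it — but the component $C$ of a singular fibre has self-intersection $-1$, a contradiction. This last observation is likely the quickest: \emph{a curve fixed pointwise by a nontrivial automorphism of a smooth surface, being a connected component of the smooth fixed locus, must have self-intersection $\ge 0$} is false in general (exceptional curves can be fixed pointwise), so I would not rely on it; instead I would use the transverse-weight analysis at the node $q$ together with the blow-down argument to finish.
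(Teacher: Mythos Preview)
Your blow-down reduction to a ruled surface is exactly the paper's first step: since $G_0$ preserves each component of each fibre, one contracts $G_0$-equivariantly until every fibre is a smooth $\bbP^1$, and it then suffices to show that a nontrivial $g\in G_0$ acts nontrivially on every fibre of a $\bbP^1$-bundle over $B\simeq\bbP^1$. The gap is what happens next. Your closedness argument only shows that the locus $\{t\in B: g|_{\phi^{-1}(t)}=\id\}$ is a proper closed subset, hence finite --- not empty; nothing you write rules out an isolated smooth fibre being fixed pointwise (you pass straight to components of singular fibres). Your appeal to ``rigidity'' on $\mathbf{F}_n$ is the right instinct, but the justification you offer (algebraic equivalence of fibres, action on $\Pic$) does not force $g$ to be trivial on neighbouring fibres.

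The paper supplies the missing mechanism. On a trivializing open $U\subset B$ the fibrewise action of $g$ is a morphism $\gamma_U:U\to\PGL_2(\bbC)$, well-defined only up to conjugacy; the conjugation-invariant function $\alpha(A)=\Tr(A')^2/\det(A')$ (for any lift $A'\in\GL_2(\bbC)$) therefore glues to a global regular map $B\to\bbC$. Since $B$ is proper and $\bbC$ is affine, this map is constant; and for $A$ of finite order one has $\alpha(A)=4$ iff $A=\id$. Faithfulness of the action on $X$ gives one fibre with value $\ne 4$, hence every fibre has value $\ne 4$, and $g$ is nontrivial on each. Phrased in language closer to your ``rigidity'': elements of $\PGL_2(\bbC)$ of order dividing $|g|$ fall into finitely many closed (semisimple) conjugacy classes, so the connected image of $\gamma_U$ lies in a single such class; if that class is $\{\id\}$ at one point it is $\{\id\}$ everywhere, contradicting faithfulness. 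Either form of this argument is what your sketch is missing.
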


\begin{proof}
Let $R$ be a component of a fibre of $\phi$.  Since $G_0$ preserves components of fibres, we may $G_0$-equivariantly blowdown $X$ to a ruled surface such that $R$ is isomorphic to a fibre of the blowdown variety.  Thus, it suffices to prove the theorem for $X$ when all fibres are isomorphic to $\bbP^1$.

Let $g$ be any non-trivial element of $G_0$.  There exists an open cover of $B$ by open sets $U$ such that $\phi^{-1}(U) \simeq U \times \bbP^1$.  Let $V$ be the subset of distinct triples of points in $(\bbP^1)^3$.  There is an isomorphism $V \to \PGL_2(\bbC)$ by taking the automorphism determined by the images of the three points $0$, $1$ and $\infty$.  By composing this isomorphism with the restrictions $g|U\times \{0\}$, $g|U\times \{1\}$ and $g|U\times \{\infty\}$, we obtain a map $\gamma_{g,U} : U \to \PGL_2(\bbC)$ which takes each point to the action of $g$ on the fibre of $\phi$.

Let $\alpha : \PGL_2(\bbC) \to \bbC$ be the map defined by
\[ \alpha : A \mapsto \frac{\Tr(A')^2}{\det(A')} \]
where $A'$ is any lift of $A$ to $\GL_2(\bbC)$.  One easily checks that $\alpha$ is well-defined and is invariant on conjugacy classes.  Furthermore, for any $A \in \PGL_2(\bbC)$ of finite order, $\alpha(A)=4$ if and only if $A=1$ (by diagonalisation).

The isomorphism $\phi^{-1}(U) \simeq U \times \bbP^1$ is only determined up to conjugacy in $\PGL_2(\bbC)$.  Gluing together each $\gamma_{g,U}$ after composing with $\alpha$ we obtain a map $\gamma_g : B \to \bbC$.  Since $\bbC$ is affine and $B \simeq \bbP^1$, the image of $\gamma_g$ is a point.  Since $G_0$ acts faithfully on $X$, there must be at least one fibre on which $g$ acts non-trivially.  Thus $\gamma_g \ne 4$ and $g$ acts non-trivially on every fibre.  Thus $G_0$ acts faithfully on every fibre.
\end{proof}

\begin{lem}\label{lem:GKcyclic}
Suppose $G$ acts versally on $X$ and $G_K$ is odd cyclic.  Then $G$ acts versally on $\bbP^1 \times \bbP^1$ or $\bbP^2$.
\end{lem}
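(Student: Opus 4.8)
The plan is to split first on whether $G_K$ is trivial, and then, in the remaining case, on whether the conic bundle $\phi\colon X\to B$ has a singular fibre.

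\textbf{The case $G_K=1$.} Here $\pi\colon G\to\PGL_2(\bbC)$ is injective, so $G$ is a polyhedral group. As recorded in the proof of Theorem \ref{thm:4surfacesClassification}, every polyhedral group acts versally on $\bbP^2$, and we are done. So from now on assume $G_K$ is nontrivial odd cyclic.

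\textbf{Structural reductions.} Since $G_K=\ker(\pi)$ acts trivially on $B$, it preserves every fibre of $\phi$ and can only interchange the two components of a singular fibre; hence $\xi(G_K)\subseteq(C_2)^r$, and as $|G_K|$ is odd this forces $\xi(G_K)=1$, i.e.\ $G_K=G_0$. Thus $G_K$ fixes every irreducible component $R$ of every fibre, and by Lemma \ref{lem:G0faithfulOnFibres} it acts faithfully on each such $R\simeq\bbP^1$; being nontrivial and cyclic, it therefore acts with exactly two fixed points on $R$. If $\phi$ has no singular fibre, then $X$ is a $\bbP^1$-bundle over $\bbP^1$, hence a Hirzebruch surface $\mathbf{F}_n$; being a ruled surface, it falls under Lemma \ref{lem:conicRuled}, which finishes the argument. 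This leaves the case where $\phi$ has a singular fibre.

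\textbf{The singular-fibre case.} Let $R\cup R'$ be a singular fibre with node $n$. Then $G_K$ fixes $n$ together with a second point on each of $R,R'$, so (inspecting the local model, in which $G_K$ acts near $n$ by $(x,y)\mapsto(\alpha x,\alpha^{-1}y)$ with $\phi=xy$) the fixed locus $X^{G_K}$ consists near $n$ of the isolated point $n$ and two smooth branches, meeting $R$ and $R'$ transversally. Globally this produces two disjoint $G_K$-fixed sections $C_1,C_2$ of $\phi$, left invariant as a set by $G$, and $G$ acts trivially on $\Pic(X)$ through $G_K$ (it fixes every fibre component and both sections, hence every $(-1)$-curve). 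Using that $G$-minimality of $X$ forbids contracting one component of each singular fibre equivariantly, together with the constraint $\ed(G)\le 2$ (so every abelian subgroup of $G$ has rank $\le 2$ by Proposition \ref{prop:edResults}) and the action of $G$ on the configuration of $(-1)$-curves, I would argue that $X$ has exactly two singular fibres and is in fact $DP_6$, with $G$ contained in the subgroup $T\rtimes(\calG_1\cap\calG_2)$ of $\Aut(DP_6)$ preserving the chosen conic bundle structure; since this group acts $T$-equivariantly birationally to $\bbP^1\times\bbP^1$ and versality is a birational invariant, $G$ then acts versally on $\bbP^1\times\bbP^1$ (alternatively, one may invoke Lemma \ref{lem:conicRuled} after this reduction).

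\textbf{Main obstacle.} The soft part is everything up to and including the production of the two $G_K$-fixed sections; the real work is showing that a $G$-minimal conic bundle with a singular fibre and odd cyclic $G_K$ cannot be anything worse than $DP_6$ — in particular that it cannot have three or more singular fibres and be, say, a non-toric minimal conic bundle not appearing in our list. This is the step that genuinely needs the structure theory of minimal conic bundles (the rank of $\Pic(X)^G$, the $(-1)$-curve configuration) together with the essential-dimension bound, rather than a purely birational or fixed-point argument, and it is where I expect the bulk of the proof to lie.
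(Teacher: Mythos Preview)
Your reduction to the ruled case is correct, and the splitting on $G_K=1$ versus $G_K$ nontrivial is fine (though unnecessary, since the paper's argument handles both at once). The genuine gap is in your singular-fibre case: you try to \emph{classify} what $X$ can be and conjecture it must be $DP_6$, but you never carry this out, and you correctly flag it as the hard step. The paper avoids this entirely by showing the case is \emph{vacuous}: if $X$ is $G$-minimal and not a ruled surface, then $G_K$ contains an involution, contradicting odd order.

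The missing idea is this. Minimality means no $G$-orbit of disjoint $(-1)$-curves can be contracted, so some $g\in G$ must swap the two components $R,R'$ of a singular fibre. Replacing $g$ by an odd power, take $g$ of order $2^a$. If $a=1$, the intersection point of $R$ and $R'$ cannot be an isolated fixed point of $g$ (an involution with an isolated fixed point acts as $(x,y)\mapsto(-x,-y)$ locally, which does not swap the branches), so $X^g$ contains a curve transverse to the fibres and hence $g\in G_K$. If $a>1$, set $h=g^{2^{a-1}}$; one checks $R\not\subset X^h$ (else $R'=g(R)\subset X^h$ too, contradicting smoothness of $X^h$), so $h$ has a unique fixed point $y\ne R\cap R'$ on $R$, and blowing down $R$ shows $y$ cannot be isolated in $X^h$. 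Again $X^h$ contains a horizontal curve, so $h\in G_K$. Either way $G_K$ has even order. This is a short local argument with the fixed locus of a single involution, not a global classification of $(-1)$-curve configurations; your proposed route through $\Pic(X)^G$ and counting singular fibres is both harder and, as you note, unfinished.
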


\begin{proof}
It suffices to consider $G$-minimal $X$.  When $X$ is a ruled surface then the result follows from Lemma \ref{lem:conicRuled}.  As we shall see, this is the only case that occurs.

Suppose $X$ is \emph{not} a ruled surface.  We will show that $G_K$ must contain an involution, contradicting the assumption that $G_K$ has odd order.  We use the same reasoning as the proof of \cite[Lemma 5.6]{DolgachevIskovskikh2006Finite-subgroup}.

Since $X$ is $G$-minimal, there must exist an element $g \in G$ that swaps two components, $R$ and $R'$, of a singular fibre of $\phi$.  By taking an odd power, we may assume that $g$ has order $m=2^a$.

Consider $a=1$.  The intersection point $p$ of $R$ and $R'$ is in the fixed locus $X^g$.  Any involution acting on a surface with an isolated fixed point must act via $(x,y)\mapsto(-x,-y)$ in some local coordinates about that point. Thus, if $p$ is an isolated fixed point then $g$ cannot swap $R$ and $R'$.  This contradiction insures that $X^g$ contains a curve other than the fibres of $\phi$.  Thus, $g$ is contained in $G_K$.

Now, consider the remaining case $a>1$.  Consider $h = g^{m/2}$.  Suppose $X^h$ contains $R$.  Then $hg(y)=gh(y)=g(y)$ applies for all $y \in R$.  This means that $R'$ is contained in $X^h$ as well, contradicting the smoothness of $X^h$.  Thus, neither component is contained in $X^h$.

There exists exactly one fixed point $y$ on $R$ other than its intersection with $R'$.  If $y$ was an isolated $h$-fixed point on $X$ then its image $q$ would still be an isolated $h$-fixed point upon blowing down $R$.  But then $R$ has a trivial $h$-action: a contradiction.  Thus $h$ fixes a curve not contained in the fibres of $\phi$.  We obtain $h \in G_K$.
\end{proof}

\begin{lem}\label{lem:GKdihedral}
Suppose $G$ acts versally on $X$ and $G_K$ is odd dihedral.  Then $G$ acts versally on $\bbP^1 \times \bbP^1$.
\end{lem}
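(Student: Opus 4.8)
The plan is to determine the group-theoretic skeleton forced by $G_K$ being odd dihedral, then feed this into Dolgachev--Iskovskikh's description of minimal exceptional conic bundles \cite[Section 5.2]{DolgachevIskovskikh2006Finite-subgroup}, and finally recognise $G$ as a monomial group on $\bbP^1 \times \bbP^1$ of the shape handled by Lemma \ref{lem:versalMonomial}.

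\emph{Step 1 (group theory and the fixed double section).} Since this lemma is only reached after the reduction at the start of the proof of Theorem \ref{thm:conicVersal} (no characteristic subgroup of order $2$ in $G_K$), we may write $G_K \simeq D_{2n}$ with $n \ge 3$ odd. Let $C \simeq C_n$ be the rotation subgroup of $G_K$; as $n$ is odd it is the unique index-$2$ subgroup of $G_K$, hence characteristic in $G_K$ and therefore normal in $G$ (as $G_K \lhd G$). Recall from that same proof that $\xi(G_K) \subseteq (C_2)^r$, so $G_K/G_0$ embeds in an elementary abelian $2$-group; since $G_K^{\mathrm{ab}} \simeq C_2$ and $G_0$ is odd cyclic, this forces $G_0 = C$. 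By Lemma \ref{lem:G0faithfulOnFibres}, $C$ then acts faithfully on every component of every fibre of $\phi$; on each component, a $\bbP^1$, it has exactly two fixed points, so $Z := X^C$ is a smooth $G$-invariant curve with $\phi|_Z : Z \to B$ of degree $2$, and every involution of $G_K \setminus C$ interchanges the two points of a general fibre of $\phi|_Z$.

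\emph{Step 2 (reduction to a toric model, via DI).} Next I would invoke \cite[Section 5.2]{DolgachevIskovskikh2006Finite-subgroup}. Geometrically, $C$ acts on each fibre fixing the algebraically varying pair of points $Z$, so it extends to a fibrewise $\mathbb{G}_m$-action; the normaliser of this $\mathbb{G}_m$ in the automorphism group of a suitable relatively minimal $\bbP^1$-bundle model is exactly a group of ``$T$-stable'' automorphisms, so after an equivariant birational modification $G$ embeds in $T \rtimes \langle \sigma, G_B \rangle$, where $T = (\bbC^\times)^2$ has $C$ as its fibre subtorus, the involution $\sigma$ is the fibre-flip $(\lambda_1,\lambda_2) \mapsto (\lambda_1^{-1},\lambda_2)$ coming from $G_K \setminus C$, and $G_B = G/G_K$ acts on the base $B \simeq \bbP^1$ compatibly with the toric structure, hence is cyclic or dihedral. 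Reading off from \cite{DolgachevIskovskikh2006Finite-subgroup} that such a model is toric with $C$ inside the torus and that $G_B$ is at worst dihedral --- while keeping track of how the involutions of $G_K$ and the elements of $G \setminus G_K$ act on $Z$ and on the nodes of the singular fibres --- is what I expect to be the technical heart of the proof. (If the model has no singular fibres, i.e.\ is a ruled surface, one finishes directly by Lemma \ref{lem:conicRuled}.)

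\emph{Step 3 (conclusion).} Granting this, $G$ is a finite subgroup of $T \rtimes \calG$ with $\calG \subseteq \calG_2$ (lower triangular in the chosen coordinates, since $\phi$ is preserved), acting monomially on a toric surface; changing the fan within the same lattice brings the surface to $\bbP^1 \times \bbP^1$, so $G \hookrightarrow T \rtimes \calG_2$. Finally, the hypothesis $\ed(G) = 2$ --- equivalently, $\rank A \le 2$ for every abelian subgroup $A \le G$ --- forces $|G \cap T|$ to be odd: $G \cap T$ is the product of the fibre part $C$, of odd order $n$, and the image of $G_B$ in the base subtorus, and were the latter of even order, $G$ would contain a copy of $(C_2)^3$ (assembled from the fibre-flip, the central involution of the base part, and a base reflection). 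Hence $|G \cap T|$ is coprime to $2$, and Lemma \ref{lem:versalMonomial}, form (\ref{thm:ed2:G2}), shows that the monomial $G$-action on $\bbP^1 \times \bbP^1$ is versal, as desired.
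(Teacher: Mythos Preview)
Your approach is genuinely different from the paper's, and as written it has a real gap.

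The paper's proof is a short, self-contained group-theoretic argument that never passes through a toric model. For each $g \in G_B$ it shows that $H=\pi^{-1}(\langle g\rangle)$ splits as $G_K \times \langle g\rangle$: since $\langle g\rangle$ fixes a point of $B$, there is an $H$-invariant fibre $F$, and the resulting map $H\to\PGL_2(\bbC)$ is injective on $G_K$; the only polyhedral groups with a normal $D_{2n}$ ($n$ odd) are $D_{2n}$ and $D_{4n}\simeq D_{2n}\times C_2$, so there is a retraction $H\to G_K$. Thus every element of $G_B$ centralises $G_K$, the map $G_B\to\Out(G_K)$ is trivial, and since $Z(G_K)=1$ the extension is the direct product $G\simeq G_K\times G_B$. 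Only now does the $(C_2)^3$ obstruction enter, forcing $G_B$ to be cyclic or odd dihedral, and $G=G_K\times G_B$ visibly acts versally on $\bbP^1\times\bbP^1$ factorwise.

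Your Step 2 is where the argument breaks. You claim that after a birational modification $G$ embeds into $T\rtimes\calG_2$ with $G_B$ ``compatible with the toric structure, hence cyclic or dihedral'', and you defer the verification to \cite{DolgachevIskovskikh2006Finite-subgroup}. But this is exactly the point that needs proof: a priori $G_B$ is an arbitrary polyhedral group, and if $G_B$ were $A_4$, $S_4$, or $A_5$ there is no torus in $\Aut(B)$ for it to normalise, so no monomial model exists. The paper excludes these possibilities only \emph{after} proving the direct-product decomposition --- a step your outline never reaches. Your Step 3 then compounds the issue: the claimed product decomposition of $G\cap T$ into ``fibre part $C$'' and ``base part'', and the assembly of a $(C_2)^3$ from a fibre-flip, a base-torus involution, and a base reflection, all presuppose the monomial embedding and enough commutativity among lifts that you have not established. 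In short, the geometric/toric route may be workable, but the load-bearing step is precisely the one you have left to the reader; the paper's extension-theoretic argument bypasses it entirely.
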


\begin{proof}
Recall that $G_0 \simeq C_n$ and $G_K \simeq D_{2n}$ for some $n$ odd as in the proof of Theorem \ref{thm:conicVersal}.  Consider any $g \in G_B$, we shall prove that $H = \pi^{-1}(\langle g \rangle)$ is a direct product $G_K \times \langle g \rangle$.

Since $\langle g \rangle$ is cyclic there is a fixed point on $B$.  Thus $\phi$ has an $H$-fixed fibre $F$.  Recall that $G_0$ acts faithfully on $F$.  If $F$ is non-singular then $\Aut(F) \simeq \PGL_2(\bbC)$.  If $F$ is singular then $\Aut(F) \subset \GL_2(\bbC)$ and we have a natural map $\Aut(F) \to \PGL_2(\bbC)$ with central kernel.  The group $G_0$ is not in the centre of $G_K$, so we have map $\eta : H \to \PGL_2(\bbC)$ which is injective on $G_0$.

Since $\eta$ is injective on $G_0$ it must be injective on $G_K$.  The image of $\eta$ must be a polyhedral group with a normal subgroup isomorphic to $G_K \simeq D_{2n}$ for some odd $n$.  From Lemma \ref{prop:polyNormalStructure}, the only possibilities are $\eta(H) \simeq G_K$ or $\eta(H) \simeq D_{4n} \simeq G_K \times C_2$ (since $n$ is odd).  Either way, there exists a retract $H \to G_K$ of the inclusion $G_K \hookrightarrow H$.  Thus $H \simeq G_K \times \langle g \rangle$.

Recall that $G_K$ has a trivial centre.  By \cite[Corollary IV.6.8]{Brown1982Cohomology-of-g}, there is only one extension of $G_K$ by $G_B$ associated to a map $G_B \to \Out(G_K)$ (up to equivalence).  Since $g$ has a trivial action on $G_K$ for any $g \in G_B$, the map $G_B \to \Out(G_K)$ is trivial.  Thus we must have $G \simeq G_K \times G_B$.

The group $G_K$ contains an involution.  If $G_B$ contains a subgroup isomorphic to $C_2 \times C_2$ then $G$ contains $(C_2)^3$.  This would contradict $\ed(G) \le 2$.  So $G_B$ must be cyclic or odd dihedral.  Thus, $G \simeq G_K \times G_B$ has a versal action on $\bbP^1 \times \bbP^1$ where $G_K$ acts on one $\bbP^1$, and $G_B$, the other.
\end{proof}

\begin{lem}\label{lem:GKC2C2}
Suppose $G$ acts versally on $X$ and $G_K \simeq C_2 \times C_2$.  Then $G$ acts versally on $\bbP^2$.
\end{lem}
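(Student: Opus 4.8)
The plan is to prove the slightly more general statement that \emph{any} finite group $G$ with $\ed(G)=2$ that admits a normal subgroup isomorphic to $C_2\times C_2$ acts versally on $\bbP^2$; Lemma~\ref{lem:GKC2C2} is then the case of the normal subgroup $G_K$. (Note $\ed(G)=2$ here: $G_K\simeq C_2\times C_2\le G$ gives $\ed(G)\ge 2$ by Proposition~\ref{prop:edResults}, while a versal $G$-surface $X$ satisfies $\ed(G)=\ed(X)\le\dim X=2$.) By Corollary~\ref{cor:versalPn} it suffices to produce a faithful $3$-dimensional representation $\rho$ of $G$ whose image meets the scalar matrices only in the identity. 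The first step is to analyse $C:=C_G(G_K)$. It is normal in $G$, acts faithfully on $X$, so $\ed(C)=2$, and $Z(C)\supseteq G_K\ne 1$; hence $C\hookrightarrow\GL_2(\bbC)$ by Proposition~\ref{prop:centreMeansP2}. A finite subgroup of $\GL_2(\bbC)$ is either diagonalisable or non-abelian, and in the latter case acts irreducibly and so has cyclic centre; since $Z(C)$ contains the non-cyclic $G_K$, the group $C$ is abelian of rank $2$. Conjugation therefore gives an action of $G/C$ on $C$, with $G/C$ embedded in $\Aut(G_K)\simeq S_3$ and $Z(G)=C^{G/C}$.

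Next I would split on whether $Z(G)$ is trivial. If $Z(G)\ne 1$, then $G$ has essential dimension $2$ and a non-trivial centre, so Proposition~\ref{prop:centreMeansP2} embeds $G$ in $\GL_2(\bbC)$, which already gives a versal action on $\bbP^2$. So assume $Z(G)=1$. If $|G/C|\le 2$, then $G/C$ acts on $G_K$ either trivially or as a transposition of $S_3=\Aut(G_K)$, which fixes one of the three involutions of $G_K$; that involution lies in $Z(G)$, a contradiction. Hence $|G/C|\in\{3,6\}$ and the image of $G/C$ in $S_3$ contains a $3$-cycle $\bar c$, and I claim $\bar c$ acts on $C$ without non-zero fixed points. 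For $|G/C|=3$ this is just $C^{G/C}=Z(G)=1$. For $|G/C|=6$, were $C^{\langle\bar c\rangle}\ne 1$, the complementary involution of $S_3$ would invert it (otherwise $Z(G)\ne 1$), so $C^{\langle\bar c\rangle}$ has odd order, and any prime $q$ dividing it would yield a subgroup $\langle G_K,\bar c\rangle\times C_q\simeq A_4\times C_q$ of $G$; but $A_4\times C_q$ has non-trivial centre and does not embed in $\GL_2(\bbC)$, so $\ed(A_4\times C_q)\ne 2$, forcing $\ed(A_4\times C_q)\ge 3$ and contradicting $\ed(G)=2$. In particular $3\nmid|C|$, and $\bar c$ is then also fixed-point-free on the character group $\widehat C$.

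To finish the case $Z(G)=1$ I would induce from a subgroup of index $3$. Let $H\le G$ be $C$ itself when $|G/C|=3$, and the preimage of a transposition of $S_3$ when $|G/C|=6$; in both cases $C\le H$, $[H:C]\le 2$ and $[G:H]=3$. Every minimal normal subgroup of $G$ lies in $C$: one meeting $C$ trivially would map isomorphically onto a non-trivial normal subgroup of $S_3$ and, being centralised by $C$, would force $Z(G)\ne 1$. These minimal normal subgroups have pairwise trivial intersection, one of them is $G_K$, and each is $H$-stable, so one can choose a linear character $\lambda$ of $H$ whose restriction to $C$ is non-trivial on every one of them (extending a suitable character of $C$ to $H$, using $H^2(\bbZ/2,\bbC^\times)=0$). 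Then $\rho:=\Ind_H^G\lambda$ is $3$-dimensional, faithful (its kernel, being the largest normal subgroup of $G$ inside $\ker\lambda$, contains no minimal normal subgroup, hence is trivial), and irreducible by Mackey's criterion (the unique non-trivial $H$-double coset is represented by $\bar c$, $H\cap{}^{\bar c}H=C$, and ${}^{\bar c}(\lambda|_C)\ne\lambda|_C$ since $\bar c$ is fixed-point-free on $\widehat C$ and $\lambda|_C\ne 1$). Being faithful and irreducible with $Z(G)=1$, the image $\rho(G)$ meets the scalars only in $\rho(Z(G))=1$, so $\rho$ descends to a faithful action of $G$ on $\bbP^2$, versal by Corollary~\ref{cor:versalPn}.

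The case $Z(G)\ne 1$ is routine. I expect the real work to be the case $Z(G)=1$: first squeezing the fixed-point-freeness of $\bar c$ out of $\ed(G)=2$ via the $A_4\times C_q$ obstruction, and then exhibiting the inducing character $\lambda$ with all the required properties at once (non-trivial on each minimal normal subgroup, and extendable from $C$ to $H$). The existence of such a $\lambda$, which rests on the fact that $G_K$ together with the odd minimal normal subgroups sit in sufficiently general position inside the abelian group $C$, is the one genuinely delicate point.
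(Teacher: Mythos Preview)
Your approach differs genuinely from the paper's. The paper exploits the conic bundle structure: it uses that $G_B=G/G_K$ acts on the base $B\simeq\bbP^1$ and hence is polyhedral, then does a case analysis on $J=\ker(G_B\to\Aut(G_K))$. When $J=1$ the group is small (at worst $S_4$); when $J\ne1$ the paper builds a subgroup $L$ with $Z(L)\simeq J$ and $L/Z(L)\simeq A_4$, and derives a contradiction from $L\hookrightarrow\GL_2(\bbC)$ and Proposition~\ref{prop:PGL2toGL2}. Your argument is purely group-theoretic and aims at the stronger statement that any $G$ with $\ed(G)=2$ and a normal $C_2\times C_2$ acts versally on $\bbP^2$, by manufacturing a faithful $3$-dimensional irreducible representation via induction. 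If it works, it is more general and more constructive; the paper's proof is shorter because it can lean on the polyhedral constraint.

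There is, however, a genuine gap in your fixed-point-freeness step for $|G/C|=6$. You assert that a prime $q\mid|C^{\langle\bar c\rangle}|$ ``would yield a subgroup $\langle G_K,\bar c\rangle\times C_q\simeq A_4\times C_q$ of $G$'', but $\bar c$ lives in $G/C$, not in $G$, and for any lift $c\in G$ one only knows $c^3\in C$. Getting $\langle G_K,c\rangle\simeq A_4$ requires a lift with $c^3=1$, i.e.\ a splitting of $1\to C\to H'\to C_3\to1$ (where $H'$ is the preimage of $\langle\bar c\rangle$), and this is exactly what fails when $3\mid|C|$---which you have not yet excluded. The conclusion $C^{\langle\bar c\rangle}=1$ is nonetheless correct and the fix is short: $Z(H')=C^{\langle\bar c\rangle}$, so if this is non-trivial then $\ed(H')=2$ with non-trivial centre gives $H'\hookrightarrow\GL_2(\bbC)$; but any $C_2\times C_2$ in $\GL_2(\bbC)$ is conjugate to $\{\mathrm{diag}(\pm1,\pm1)\}$, whose normaliser is the monomial matrices, so $N_{\GL_2}(G_K)/C_{\GL_2}(G_K)\simeq C_2$. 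This contradicts the fact that $H'/C\simeq C_3$ acts faithfully on $G_K$. (This is essentially the same obstruction the paper uses at the end of its argument.)

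Two smaller points. First, your reason that a minimal normal $N$ with $N\cap C=1$ ``would force $Z(G)\ne1$'' is not quite right when $G/C=S_3$ (such an $N$ could a priori be inverted by $\bar\tau$); the cleaner argument is that $[N,C]\subseteq N\cap C=1$ forces $N\subseteq C_G(C)=C$, a contradiction. Second, the existence of the inducing character $\lambda$---your acknowledged ``delicate point''---does require work in the case $|G/C|=6$: you must exhibit a $\bar\tau$-invariant character of $C$ non-trivial on every minimal normal subgroup, and for this one should argue prime by prime (for odd $p$ the $S_3$-module $C[p]$ is forced to be the standard representation, so $\bar\tau$ has a $+1$-eigenvector on $\widehat{C[p]}$; for $p=2$ one checks that $C_2\simeq(\bbZ/2^a)^2$ with $\bar\tau$ conjugate to the coordinate swap, and the diagonal characters do the job). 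As written this step is only asserted.
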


\begin{proof}
It suffices to consider $G$ with a trivial centre, since otherwise we immediately have a versal action on $\bbP^2$ by Proposition \ref{prop:centreMeansP2}.  We have a map $G_B \to \Aut(G_K) \simeq S_3$ with kernel $J$.  We note that, by construction, if $g \in G$ maps to $J \subset G/G_K$ then $g$ commutes with $G_K$.

Suppose $J = 1$.  If $G \to S_3$ is not surjective then $G$ is abelian or isomorphic to $A_4$.  Both of these have versal actions on $\bbP^2$ so it suffices to assume $G$ is an extension of $C_2^2$ by $S_3$.  A $2$-Sylow subgroup of $G$ is not normal, since we would obtain a non-trivial map from $S_3$ to $C_3$ (which cannot exist).  A $3$-Sylow subgroup of $G$ is not normal since $A_4 \subset G$ and $C_3$ is not normal in $A_4$.  The only group $G$ of this form is $S_4$ \cite[Theorem 1.33]{Isaacs2008Finite_group_th}.  The group $S_4$ has a versal action on $\bbP^2$ by the proof of Theorem \ref{thm:4surfacesClassification}.

It remains to consider $J \ne 1$.  We shall see that this case cannot occur.

Suppose $J$ contains a subgroup $M \simeq C_2 \times C_2$.  The group $M'=\pi^{-1}(M) \subset G$ has essential dimension $\le 2$ and a non-trivial centre (it is a $2$-group).  Thus, there is an embedding $\rho : M' \hookrightarrow \GL_2(\bbC)$.  This representation $\rho$ is faithful and, since $G_K \subset Z(M')$, has a non-cyclic centre.  It cannot be irreducible by Schur's lemma.  Thus, $M'$ is abelian and must have a fixed point on $X$ (since it is versal).

Under the projection to $B$ this becomes a fixed point for $M$.  But $M$ has rank $2$ and cannot have a fixed point on $B \simeq \bbP^1$, a contradiction.  Thus we cannot have a subgroup $C_2 \times C_2$ in $J$.  We have a morphism $G_B \to S_3$ whose kernel cannot contain $C_2 \times C_2$.  Considering the normal structure of $G_B$ (a polyhedral group), this excludes $G_B$ isomorphic to $A_4$, $S_4$ or $A_5$.

It remains to consider $G_B$ cyclic or dihedral.  The involutions in $\Aut(G_K)$ all fix a non-trivial element of $G_K$.  Since $G$ has a trivial centre, we must have an element $g \in G$ that descends to an element of order $3$ in $\Aut(G_K)$.

If $G_B$ is cyclic then $J$ and $\pi(g)$ generate $G_B$.  If $G_B$ is dihedral then $J$ and $\pi(g)$ generate the maximal normal cyclic subgroup of $G_B$.  Indeed, there is no non-trivial map from a dihedral group to $C_3$ so $G_B$ surjects onto $\Aut(G_K) \simeq S_3$.  The kernel of the composition $G_B \to \Aut(G_K) \to C_2$ is generated by $J$ and $\pi(g)$ as desired.  Note that $\pi(g)^3 \in J$ in either case.

Let $L = \langle \pi^{-1}(J), g \rangle$.  Consider any $j \in \pi^{-1}(J)$.  Since $\pi(j)$ and $\pi(g)$ commute, we have $(g,j)=k$ for some $k \in G_K$.  Thus $j^2 \in Z(L)$ since $gj^2g^{-1}=k^2j^2=j^2$.

Suppose $J$ has even order.  Note that $Z(L) \cap G_K = 1$ so there exists $j \in \pi^{-1}(J)$ with $j \notin G_K$ such that $j^2=1$.  In this case, we have a subgroup $G_K \times \langle j \rangle \simeq (C_2)^3 \subset G$.  This cannot have essential dimension $2$ so we have a contradiction.

We may assume $J$ has odd order.  Note that $Z(L) \subset \pi^{-1}(J)$ since any element mapping non-trivially to $L/\pi^{-1}(J) \subset \Aut(G_K)$ cannot be central.  We want to show that $\pi$ maps $Z(L)$ onto $J$.  For any $y \in J$ there exists $x \in J$ such that $x^2 = y$ (since $|J|$ is odd).  There is a lift $l \in L$ such that $\pi(l) = x$.  We have $l^2 \in Z(L)$ and $\pi(l^2) = x^2=y$ as desired.  Since $G_K \cap Z(L) = 1$ we have a splitting $J \hookrightarrow L$ with image $Z(L)$.  Thus we may identify $J$ and $Z(L)$

Since $\ed(L) \le 2$ and $J = Z(L) \ne 1$, there is an embedding $L \hookrightarrow \GL_2(\bbC)$.  We then compose this with the natural map $\GL_2(\bbC) \to \PGL_2(\bbC)$.  Note that $L/J \simeq (G_K \rtimes C_3) \simeq A_4$.  Since $Z(L) = J$, we have a map $L \to \PGL_2(\bbC)$ with image $A_4$ and kernel $J$.  Any subgroup of $\GL_2(\bbC)$ mapping onto $A_4 \subset \PGL_2(\bbC)$ must have a central involution by Proposition \ref{prop:PGL2toGL2}.  So $J$ has even order; a contradiction.
\end{proof}

\section{Del Pezzo Surfaces of Degree $\le 4$}\label{sec:DelPezzo}

We are finally in a position to prove Theorem \ref{thm:ed2classification}.  It remains only to show that groups with versal actions on del Pezzo surface of degree $\le 4$ have already been seen acting versally on the surfaces of Theorem \ref{thm:4surfacesClassification}.  Indeed, the main theorem is an immediate consequence of the following:

\newtheorem*{ed2to4surfacesHack}{Theorem \ref{thm:ed2to4surfaces}}
\begin{ed2to4surfacesHack}
If $G$ is a finite group of essential dimension $2$ then $G$ has a versal action on $\bbP^2$, $\bbP^1 \times \bbP^1$, $DP_6$, or $DP_5$.
\end{ed2to4surfacesHack}

\begin{proof}
All groups $G$ of essential dimension $2$ have versal actions on minimal rational $G$-surfaces by Proposition \ref{prop:ratMinModelStrategy}.  Thus, it suffices to prove that, for any minimal rational versal $G$-surface $X$, there exists a versal action on one of the $4$ surfaces listed above.  Recall that any minimal rational $G$-surface $X$ is a del Pezzo surface or has a conic bundle structure by Theorem \ref{thm:EMI}.

Theorem \ref{thm:conicVersal} proves the theorem for surfaces with a conic bundle structure.  We recall from \cite[Section 6]{DolgachevIskovskikh2006Finite-subgroup} that the only minimal rational $G$-surfaces of degree $\ge 5$ are precisely those listed in the statement of the theorem.  Thus it suffices to consider degrees $\le 4$.  In the following $X$ is a del Pezzo surface with a versal $G$-action.

\smallskip \noindent
{\bf Case degree $4$:}
The minimal groups of automorphisms of del Pezzo surfaces $X$ of degree $4$ are listed in \cite[Theorem 6.9]{DolgachevIskovskikh2006Finite-subgroup}.  We know that $G$ must be from this list and that $\ed(G) \le 2$.  If $G$ is abelian or a $2$-group then it acts versally on $\bbP^2$.  All remaining groups have abelian subgroups with ranks $\ge 3$ (note that $C_2 \times A_4$ contains $C_2^3$); thus they cannot be versal.

An alternative proof that does not rely directly on \cite[Theorem 6.9]{DolgachevIskovskikh2006Finite-subgroup} can be found in the appendix of \cite{Duncan2009Finite_Groups_o}.

\smallskip \noindent
{\bf Case degree $3$:}
The minimal groups of automorphisms of del Pezzo surfaces $X$ of degree $3$ are listed in \cite[Theorem 6.14]{DolgachevIskovskikh2006Finite-subgroup}.  It suffices to consider $G$ from this list.

All groups with non-trivial centres and essential dimension $\le 2$ have versal actions on $\bbP^2$ by Proposition \ref{prop:centreMeansP2}.  Thus we may assume $G$ has a trivial centre.  In particular, we may eliminate all abelian groups from the list.  Next, we may eliminate all groups with abelian subgroups of rank $\ge 3$ since they cannot be versal by Proposition \ref{prop:edResults}(\ref{prop:edResults:abelian}).  Similarly, we eliminate $G$ containing a non-abelian $3$-subgroup by Proposition \ref{prop:3groupsAbelian}.  Also, if $G$ is a subgroup of $S_5$ then $G$ has a versal action on $DP_5$ by the proof of Theorem \ref{thm:4surfacesClassification}.

All that remains to consider are $G$ of the form $C_3^2 \rtimes C_2$ and $C_3^2 \rtimes C_2^2$.  It suffices to consider $G \simeq C_3^2 \rtimes C_2^2$.  We may view this group as a representation of $C_2^2$ on the vector space $\bbF_3^2$.  Since the centre is trivial, we may assume the representation is faithful.  The representation is diagonalisable, so $G$ is isomorphic to $S_3 \times S_3$.  This group has a versal action on $\bbP^1 \times \bbP^1$ by the proof of Theorem \ref{thm:4surfacesClassification}.

An alternative proof can be found in the appendix of \cite{Duncan2009Finite_Groups_o}.

\smallskip \noindent
{\bf Case degree $2$:}
We have a finite $G$-equivariant morphism of degree $2$ to $\bbP^2$ (see \cite[Section 6.6]{DolgachevIskovskikh2006Finite-subgroup}).  If the induced action of $G$ on $\bbP^2$ is faithful then we are done.  Otherwise, the group $G$ contains a central involution (a \emph{Geiser involution}).  Any such group has a non-trivial centre and sits inside $\GL_2(\bbC)$ by Corollary \ref{prop:centreMeansP2}.  

\smallskip \noindent
{\bf Case degree $1$:}
This case proceeds the same way as degree $2$ via the \emph{Bertini involution}.  The only difference is that the finite morphism of degree $2$ maps onto a singular quadric cone in $\bbP^3$ (\cite[Section 6.7]{DolgachevIskovskikh2006Finite-subgroup}).  The automorphism group of a singular quadric cone is the same as the minimal ruled surface $\bfF_2$ (see \cite[Example V.2.11.4]{Hartshorne1977Algebraic-geome}).  Any versal action on such a surface must also act versally on $\bbP^2$ or $\bbP^1 \times \bbP^1$ (see Lemma \ref{lem:conicRuled} above).
\end{proof}

\subsection*{Acknowledgements}
The author was partially supported by a Canada Graduate Scholarship from the Natural Sciences and Engineering Research Council of Canada.  The author would like to thank Z. Reichstein for providing extensive advice and comments on several early drafts of this paper.  The author also thanks R. L\"otscher for useful discussions, and J-P. Serre and the referee for some helpful comments.

\addcontentsline{toc}{section}{References}
\bibliographystyle{plain}
\bibliography{bibliography}

\end{document}